\let\Setlength\setlength 
\newlength{\arrayrulewidthOriginal}
\newcommand{\Cline}[2]{%
  \noalign{\global\Setlength{\arrayrulewidthOriginal}{\arrayrulewidth}}%
  \noalign{\global\Setlength{\arrayrulewidth}{#1}}\cline{#2}%
  \noalign{\global\Setlength{\arrayrulewidth}{\arrayrulewidthOriginal}}}
\newcommand*\circled[1]{\tikz[baseline=(char.base)]{
    \node[shape=circle,draw,inner sep=2pt] (char) {#1};}} 
\theoremstyle{dotless}
\newtheorem{corollary}{Corollary}[section]
\newtheorem{lemma}[corollary]{Lemma}
\newtheorem{theorem}[corollary]{Theorem}
\newtheorem{proposition}[corollary]{Proposition}
\newtheorem*{theorem*}{Theorem}
\theoremstyle{definition}
\newtheorem{construction}[corollary]{Construction}
\newtheorem{definition}[corollary]{Definition}
\newtheorem{remark}[corollary]{Remark}
\newtheorem{example}[corollary]{Example}
\newtheorem{notation}[corollary]{Notation}
\newcommand{\CR}{\operatorname{cr}}
\newcommand{\mult}{\operatorname{mult}}
\newcommand{\ft}{\operatorname{ft}}
\newcommand{\ev}{\operatorname{ev}}
\newcommand{\val}{\operatorname{val}}
\newcommand{\inc}{\operatorname{inc}}
\newcommand{\out}{\operatorname{out}}
\newcommand{\flow}{\operatorname{flow}}
\newcommand{\leak}{\operatorname{leak}}
\newcommand{\rec}{\operatorname{rec}}
\newcommand{\floor}{\operatorname{floor}}
\newcommand{\la}{\operatorname{lab}}
\newcommand{\sgn}{\operatorname{sgn}}
\begin{document}
\pagenumbering{arabic}

\author{Christoph Goldner}
\address{ Eberhard Karls Universit\"{a}t T\"{u}bingen, Germany}
\email{\href{mailto:christoph.goldner@math.uni-tuebingen.de}{christoph.goldner@math.uni-tuebingen.de}}

\title{Counting tropical rational space curves with cross-ratio constraints}

\keywords{Enumerative geometry, tropical geometry, cross-ratios, tropical cross-ratios, space curves, floor diagrams, cross-ratio floor diagrams}
\subjclass[2010]{14N10, 14T05}

\begin{abstract}
This is a follow-up paper of \cite{CR1}, where rational curves in surfaces that satisfy general positioned point and cross-ratio conditions were enumerated. A suitable correspondence theorem provided in \cite{IlyaCRC} allowed us to use tropical geometry, and, in particular, a degeneration technique called \textit{floor diagrams}. This correspondence theorem also holds in higher dimension.

In the current paper, we introduce so-called \textit{cross-ratio floor diagrams} and show that they allow us to determine the number of rational space curves that satisfy general positioned point and cross-ratio conditions. Moreover, \textit{graphical contributions} are introduced which provide a novel and structured way of understanding multiplicities of floor decomposed curves in $\mathbb{R}^3$. Additionally, so-called \textit{condition flows} on a tropical curve are used to reflect how conditions imposed on a tropical curve yield different types of edges. This concept is applicable in arbitrary dimension.
\end{abstract}

\maketitle


\section*{Introduction}

A \textit{cross-ratio} is, like a point condition, a condition that can be imposed on curves. More precisely, a cross-ratio is an element of the ground field associated to four collinear points. It encodes the relative position of these four points to each other. It is invariant under projective transformations and can therefore be used as a constraint that four points on $\mathbb{P}^1$ should satisfy. So a cross-ratio can be viewed as a condition on elements of the moduli space of $n$-pointed rational stable maps to a toric variety. In case of rational plane curves, a cross-ratio condition appears as a main ingredient in the proof of the famous Kontsevich's formula.
Thus the following enumerative problem naturally comes up:

\begin{enumerate}[label=(1),ref=(1)]
\item \label{Question1}
\fbox{\parbox{14.5cm}{Determine the number of rational curves in $\mathbb{P}^m_{\mathbb{C}}$ of a given degree that satisfy general positioned point conditions and cross-ratio conditions.}}
\end{enumerate}

In the past, tropical geometry proved to be an effective tool to answer enumerative questions. To successfully apply tropical geometry to an enumerative problem, a so-called \textit{correspondence theorem} is required.
A correspondence theorem states that an enumerative number equals its tropical counterpart, where in tropical geometry we have to count each tropical object with a suitable multiplicity reflecting the number of classical objects in our counting problem that tropicalize to the given tropical object.
The first celebrated correspondence theorem was proved by Mikhalkin \cite{MikhalkinFundamental}.
Tyomkin \cite{IlyaCRC} proved a correspondence theorem that involves cross-ratios. Using Tyomkin's correspondence theorem, question \ref{Question1} can be rephrased as

\begin{enumerate}[label=(2),ref=(2)]
\item \label{Question2}
\fbox{\parbox{14.5cm}{Determine the weighted number of rational tropical curves in $\mathbb{R}^m$ of a given degree that satisfy general positioned point conditions and tropical cross-ratio conditions.}}
\end{enumerate}

Notice that in the rephrased question \textit{tropical cross-ratios} are considered. Tropical cross-ratios are the tropical counterpart to non-tropical cross-ratios.
Mikhalkin \cite{MikhalkinCRC} introduced a tropical version of cross-ratios under the name ``tropical double ratio'' to embed the moduli space of $n$-marked abstract rational tropical curves $\mathcal{M}_{0,n}$ into $\mathbb{R}^N$ in order to give it the structure of a balanced fan.
Roughly speaking, a tropical cross-ratio fixes the sum of lengths of a collection of bounded edges of a rational tropical curve.

\begin{example}\label{ex:introduction_trop_CR_3_dim}
Figure \ref{Figure36} shows a rational tropical degree $2$ curve $C$ in $\mathbb{R}^3$. The curve $C$ satisfies three point conditions with its contracted ends labeled by $1,2,3$, and is satisfies one tangency condition (which is a line that is indicated by dots) with its end labeled with $4$. Moreover, $C$ satisfies the tropical cross-ratio $\lambda'=(12|34)$ which determines the bold red length.

\begin{figure}[H]
\centering
\def\svgwidth{360pt}
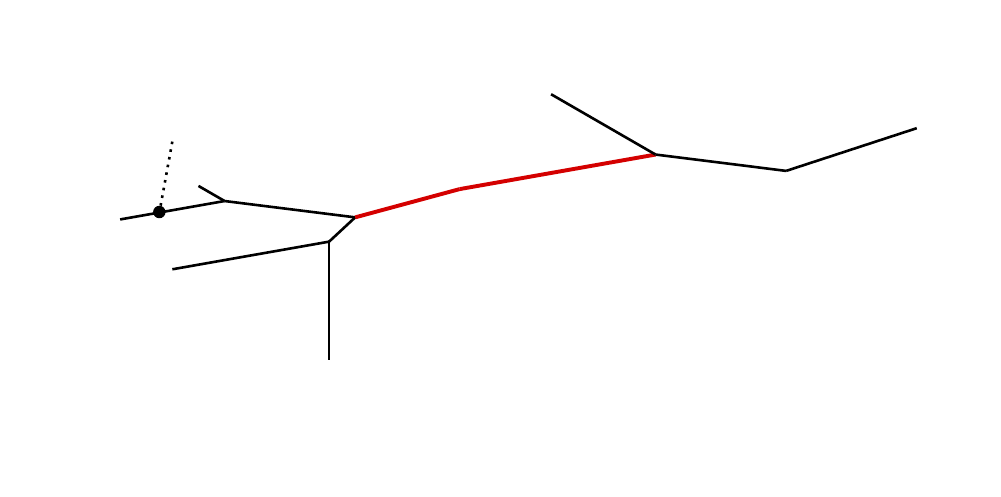
\caption{The tropical curve $C$ from Example \ref{ex:introduction_trop_CR_3_dim} satisfying three point conditions and one tropical cross-ratio.}
\label{Figure36}
\end{figure}

\end{example}

In case of plane curves (i.e. $m=2$) question \ref{Question2} (and therefore question \ref{Question1}) was answered exhaustively by giving a recursive formula \cite{GeneralKontsevich} and by explicitly constructing plane rational tropical curves that satisfy the given conditions \cite{CR1}.
The present paper determines the numbers of questions \ref{Question1}, \ref{Question2} in case of space curves, i.e. $m=3$. To do so, we combine methods used in \cite{CR1} with ideas developed in \cite{GeneralKontsevich}.\\

The main tool we use are so-called floor diagrams. Floor diagrams are graphs that arise from so-called floor decomposed tropical curves by forgetting some information. Floor diagrams were introduced by Mikhalkin and Brugall\'{e} in \cite{MikhalkinBrugalleFloorDiagIntroduction, MikhalkinBrugalle} to give a combinatorial description of Gromov-Witten invariants of Hirzebruch surfaces. Floor diagrams have also been used to establish polynomiality of the node polynomials \cite{MikhalkinFomin} and to give an algorithm to compute these polynomials in special cases, see \cite{BlockNodePoly}. Moreover, floor diagrams have been generalized, for example in case of $\Psi$-conditions, see \cite{PsiFloorDiagrams}, or for counts of curves relative to a conic \cite{Bru}. A generalization of floor diagrams that includes tropical cross-ratios was used in \cite{CR1} to answer question \ref{Question2} combinatorially for $m=2$, i.e. for plane curves.

In the present paper, we want to extend the so-called \textit{cross-ratio floor diagrams} of \cite{CR1} to give a combinatorial answer to question \ref{Question2} in case of $m=3$, i.e. for space curves.
So our aim is to define cross-ratio floor diagrams in case of $m=3$ from which floor decomposed curves can be reconstructed.
Defining cross-ratio floor diagrams associated to floor decomposed curves is a two step problem. First, we need to define the cross-ratio floor diagram itself, i.e. a degeneration of a floor decomposed curve. The difficulties lie in the second step, which is to define the multiplicity of a cross-ratio floor diagram. Multiplicities are necessary since different floor decomposed curves may degenerate to the same cross-ratio floor diagram. Moreover, we want \dots
\begin{itemize}
\item[(a)] \dots our multiplicity to be local, i.e. we want to define the multiplicity of a cross-ratio floor diagram as a product over vertex multiplicities of that floor diagram.
\item[(b)] \dots to make sure that such a local vertex multiplicity encodes the number of floors degenerating to that vertex such that we can glue the curve pieces degenerating to each vertex in the cross-ratio floor diagram to a curve that degenerates to the whole floor diagram.
\end{itemize}

The main result of this paper is to extend current degeneration techniques, i.e. to introduce suitable cross-ratio floor diagrams that yield a combinatorial solution of question \ref{Question2}, see Theorem \ref{thm:counting_CR_floor_diagrams=counting_curves}. It turns out that the multiplicities needed for such cross-ratio floor diagrams are exactly the numbers general Kontsevich's formula \cite{GeneralKontsevich} provides, i.e. in order to answer question \ref{Question2} for $m=3$, it is necessary to know its answer in case of $m=2$.
Defining the multiplicity of such a cross-ratio floor diagram is done via a similar approach as in \cite{GeneralKontsevich}. In fact, concepts of \cite{GeneralKontsevich} are systematically generalized to make them applicable in higher dimensions, as we hope that they might be useful in answering question \ref{Question2} for $m>3$. One of these concepts is that of \textit{condition flows} on tropical curves. Recently, a similar concept was independently introduced in \cite{MandelRuddat2019}. Condition flows reflect how conditions imposed on a tropical curve yield different types of edges. These flows generalize different ad hoc methods that appeared in different contexts \cite{BroccoliCurves, RanganathanCavalieriMarkwigJohnson, CR1}, see Remark \ref{remark:condition_flows_generalize_ad_hoc_methods}. Moreover, \textit{graphical contributions} are introduced which provide a novel and structured way of understanding multiplicities of floor decomposed curves in $\mathbb{R}^3$, see Proposition \ref{prop:graphical_contributions}.

Applying Tyomkin's correspondence theorem \cite{IlyaCRC} to the main result (Theorem \ref{thm:counting_CR_floor_diagrams=counting_curves}) then immediately yields a combinatorial answer to question \ref{Question1}, see Corollary \ref{cor:count_CR_floor_diag=classical_count}. We remark that our tropical approach is capable of not only determining the algebro-geometric numbers we are looking for in question \ref{Question1}, but also of determining further tropical numbers involving tangency conditions of codimension one and two. Moreover, our approach is not limited to tropical curves corresponding to curves in $\mathbb{P}^3$, but also yields curve counts in other toric varieties.

\subsection*{Future research}
Although concepts like condition flows and floor decomposition of tropical curves carry over to higher dimension, the problem whether there is a cross-ratio floor diagram approach to determine the numbers of Question \ref{Question1} for $m>3$ is still open. How multiplicities of floor decomposed tropical curves behave under cutting the tropical curves into pieces is currently unknown. This prevents us from extending the cross-ratio floor diagram approach to higher dimensions.

\subsection*{Organization of the paper}
The preliminary section introduces notation, collects background on tropical moduli spaces, tropical intersection theory, tropical cross-ratios and recalls previous results.
Right after the preliminary section the new and general concept of \textit{condition flows} is established. Floor decomposed curves and their multiplicities are studied via \textit{graphical contributions} in the following section.
Cross-ratio floor diagrams and their multiplicities are then introduced. The main result relating counts of cross-ratio floor diagrams to counts of rational tropical space curves of a given degree satisfying given conditions is proved in the last section.

\subsection*{Acknowledgements}
The author would like to thank Hannah Markwig for valuable feedback and helpful discussions. The author gratefully acknowledges partial support by DFG-collaborative research center TRR 195 (INST 248/237-1).

\section{Preliminaries}

We recall some standard notations and definitions from tropical geometry \cite{MikhalkinCRC, KontsevichPaper,GathmannKerberMarkwig} and give a brief overview of the necessary tropical intersection theory.

Besides this, we try to make notations used as clear as possible by introducing notations in separate blocks to which we refer later.

\begin{notation}\label{notation:underlined_symbols}
We write $[m]:=\lbrace 1,\dots,m \rbrace$ if $0\neq m\in\mathbb{N}$, and if $m=0$, then define $[m]:=\emptyset$.
Underlined symbols indicate a set of symbols, e.g. $\underline{n}\subset [m]$ is a subset $\lbrace 1,\dots, m\rbrace$. We may also use sets $S$ of symbols as an index, e.g. $p_S$, to refer to the set of all symbols $p$ with indices taken from $S$, i.e. $p_S:=\lbrace p_i\mid i\in S \rbrace$. The $\#$-symbol is used to indicate the number of elements in a set, for example $\#[m]=m$.
\end{notation}

\subsection*{Tropical intersection theory}
This subsection collects intersection theoretic background. For more details of tropical intersection theory see \cite{FultonSturmfels, Rau,Allermann,FirstStepsIntersectionTheory, Katz2012, IntersectionMatroidalFans,  AllermannHampeRau, JohannesIntersectionsonTropModuliSpaces}. 

\subsection{Tropical intersection theory}
\begin{definition}[Normal vectors and balanced fans]
Let $V:=\Gamma\otimes_{\mathbb{Z}}\mathbb{R}$ be the real vector space associated to a given lattice $\Gamma$ and let $X$ be a fan in $V$. The lattice generated by $\operatorname{span}(\kappa)\cap\Gamma$, where $\kappa$ is a cone of $X$, is denoted by $\Gamma_\kappa$. Let $\sigma$ be a cone of $X$ and $\tau$ be a face of $\sigma$ of dimension $\dim(\tau)=\dim(\sigma)-1$ (we write $\tau<\sigma$). A vector $u_{\sigma}\in\Gamma_\sigma$ that generates $\Gamma_\sigma / \Gamma_\tau$ such that $u_\sigma+\tau\subset\sigma$ defines a class $u_{\sigma / \tau}:=[u_\sigma]\in\Gamma_\sigma / \Gamma_\tau$ that does not depend on the choice of $u_\sigma$. This class is called \textit{normal vector of $\sigma$ relative to $\tau$}.

$X$ is a \textit{weighted fan of dimension $k$} if $X$ is of pure dimension $k$ and there are weights on its facets (i.e. its $k$-dimensional faces), that is there is a map $\omega_X:X^{(k)}\to\mathbb{Z}$. The number $\omega_X(\sigma)$ is called \textit{weight} of the facet $\sigma$ of $X$. To simplify notation, we write $\omega(\sigma)$ if $X$ is clear. Moreover, a weighted fan $(X,\omega_X)$ of dimension $k$ is called a \textit{balanced} fan of dimension $k$ if
\begin{align*}
\sum_{\sigma \in X^{(k)}, \tau < \sigma} \omega(\sigma)\cdot u_{\sigma / \tau} = 0
\end{align*}
holds in $V/\langle\tau\rangle_{\mathbb{R}}$ for all faces $\tau$ of dimension $\dim(\tau)=\dim(\sigma)-1$.
\end{definition}

\begin{definition}[Affine cycles]
Let $V:=\Gamma\otimes_{\mathbb{Z}}\mathbb{R}$ be the real vector space associated to a given lattice $\Gamma$. A \textit{tropical fan $X$ (of dimension $k$)} is a balanced fan of dimension $k$ in $V$ and $[(X,\omega_X)]$ denotes the refinement class of $X$ with weights $\omega_X$ (see Definition 2.8 and Construction 2.10 of \cite{FirstStepsIntersectionTheory}). Such a class is also called an \textit{affine (tropical) $k$-cycle} in $V$. Denote the set of all affine $k$-cycles in $V$ by $Z^{\textrm{aff}}_k(V)$. 
For a fan $X$ in $V$, we may also define an \textit{affine $k$-cycle} in $X$ as an element $[(Y,\omega_Y)]$ of $Z^{\textrm{aff}}_k(V)$ such that the support of $Y$ with nonzero weights lies in the support of $X$ (see Definition 2.15 of \cite{FirstStepsIntersectionTheory}). Define 
$|[(X,\omega_X)]|:=X^*$, where $X^*$ denotes the support of $X$ with nonzero weights.

The set $Z^{\textrm{aff}}_k(V)$ (resp. $Z^{\textrm{aff}}_k([(X,\omega_X)])$) can be turned into an abelian group by taking unions while refining appropriately.
\end{definition}

\begin{definition}[Rational functions]
Let $[(X,\omega_X)]$ be an affine $k$-cycle. A \textit{(nonzero) rational function on $[(X,\omega_X)]$} is a continuous piecewise linear function $\varphi:|[(X,\omega_X)]|\to\mathbb{R}$, i.e. there exists a representative $(X,\omega_X)$ of $[(X,\omega_X)]$ such that on each cone $\sigma\in X$ the map $\varphi$ is the restriction of an integer affine linear function. The set of (nonzero) rational functions of $[(X,\omega_X)]$ is denoted by $\mathcal{K}^*([(X,\omega_X)])$.

Define $\mathcal{K}\left([(X,\omega_X)]\right):=\mathcal{K}^*([(X,\omega_X)])\cup \{-\infty\}$ such that $(\mathcal{K}([(X,\omega_X)]),\operatorname{max},+)$ is a semifield, where the constant function $-\infty$ is the ``zero" function.
\end{definition}

\begin{definition}[Divisor associated to a rational function]\label{definition:Assoc_Weil_Div}
Let $[(X,\omega_X)]$ be an affine $k$-cycle in $V=\Gamma\otimes_{\mathbb{Z}}\mathbb{R}$ and $\varphi\in\mathcal{K}^*([(X,\omega_X)])$ a rational function on $[(X,\omega_X)]$. Let $(X,\omega)$ be a representative of $[(X,\omega_X)]$ on whose cones $\varphi$ is affine linear and denote these linear pieces by $\varphi_\sigma$. We denote by $X^{(i)}$ the set of all $i$-dimensional cones of $X$. We define $\operatorname{div}(\varphi):=\varphi\cdot [(X,\omega_X)]:= [(\bigcup_{i=0}^{k-1}X^{(i)},\omega_{\varphi})]\in Z^{\textrm{aff}}_{k-1}([(X,\omega_X)])$, where
\begin{align*}
\omega_\varphi : X^{(k-1)} &\to \mathbb{Z}\\
\tau &\mapsto\sum_{\sigma \in X^{(k)}, \tau < \sigma} \varphi_\sigma(\omega(\sigma)v_{\sigma/\tau})-\varphi_\tau \left( \sum_{\sigma \in X^{(k)}, \tau < \sigma}\omega(\sigma)v_{\sigma/\tau}\right)
\end{align*}
and the $v_{\sigma/\tau}$ are arbitrary representatives of the normal vectors $u_{\sigma/\tau}$. If $[(Y,\omega_Y)]$ is an affine $k$-cycle in $[(X,\omega_X)]$, we define $\varphi\cdot [(Y,\omega_Y)]:=\varphi\mid_{|[(Y,\omega_Y)]|}\cdot [(Y,\omega_Y)]$.
\end{definition}

\begin{definition}[Affine intersection product]
Let $[(X, \omega_X)]$ be an affine $k$-cycle. The subgroup of globally linear functions in $\mathcal{K}^*([(X, \omega_X)])$ with respect to $+$ is denoted by $\mathcal{O}^*([(X, \omega_X)])$. We define the \textit{group of affine Cartier divisors of $[(X, \omega_X)]$} to be the quotient group $\operatorname{Div}([(X, \omega_X)]):=\mathcal{K}^*([(X, \omega_X)])/\mathcal{O}^*([(X, \omega_X)])$. Let $[\varphi]\in\operatorname{Div}([(X, \omega_X)])$ be a Cartier divisor. The divisor associated to this function is denoted by $\operatorname{div}([\varphi]):=\operatorname{div}(\varphi)$ and is well-defined. The following bilinear map is called \textit{affine intersection product}
\begin{align*}
\cdot\, : \operatorname{Div}([(X, \omega_X)])\times Z^{\textrm{aff}}_k([(X, \omega_X)]) &\to Z^{\textrm{aff}}_{k-1}([(X, \omega_X)])\\
([\varphi],[(Y, \omega_Y)]) &\mapsto [\varphi]\cdot [(Y, \omega_Y)] := \varphi\cdot [(Y, \omega_Y)].
\end{align*}
\end{definition}

\begin{definition}[Morphisms of fans]
Let $X$ be a fan in $V=\Gamma\otimes_{\mathbb{Z}}\mathbb{R}$ and $Y$ a fan in $V'=\Gamma'\otimes_{\mathbb{Z}}\mathbb{R}$. A \textit{morphism} $f:X\to Y$ is a $\mathbb{Z}$-linear map from $|X|\subseteq V$ to $|Y|\subseteq V'$ induced by a $\mathbb{Z}$-linear map on the lattices. A \textit{morphism of weighted fans} is a morphism of fans. A \textit{morphism of affine cycles} $f:[(X,\omega_X)]\to [(Y,\omega_Y)]$ is a morphism of weighted fans $f:X^*\to Y^*$ that is independent of the choice of representatives, where $X^*$ (resp. $Y^*$) denotes the support of $X$ (resp. $Y$) with nonzero weight.
\end{definition}

\begin{definition}[Push-forward of affine cycles]
Let $V=\Gamma\otimes_{\mathbb{Z}}\mathbb{R}$ and $V'=\Gamma'\otimes_{\mathbb{Z}}\mathbb{R}$. Let $[(X, \omega_X)]\in Z^{\textrm{aff}}_m(V)$ and $[(Y, \omega_Y)]\in Z^{\textrm{aff}}_n(V')$ be cycles with representatives $(X,\omega_X)$ and $(Y,\omega_Y)$. Let $f:X\to Y$ be a morphism. Choosing a refinement of $(X,\omega_X)$, the set of cones
\begin{align*}
f_*X:=\{f(\sigma)\mid \sigma\in X \textrm{ contained in a maximal cone of $X$ on which $f$ is injective} \}
\end{align*}
is a tropical fan in $V'$ of dimension $m$ with weights
\begin{align*}
\omega_{f_*X}(\sigma'):=\sum_{\sigma\in X^{(m)}: \, f(\sigma)=\sigma'} \omega_X(\sigma)\cdot |\Gamma_{\sigma'}'/f(\Gamma_\sigma)|
\end{align*}
for all $\sigma'\in f_*X^{(m)}$. The equivalence class of $(f_*X,\omega_{f_*X})$ is uniquely determined by the equivalence class of $(X,\omega_X)$.
For $[(Z,\omega_Z)]\in Z^{\textrm{aff}}_k([(X,\omega_X)])$ we define
\begin{align*}
f_*[(Z,\omega_Z)]:=[(f_*(Z^*),\omega_{f_*(Z^*)})]\in Z^{\textrm{aff}}_k([(Y,\omega_Y)])
\end{align*}
The map
\begin{align*}
Z^{\textrm{aff}}_k([(X, \omega_X)])\to Z^{\textrm{aff}}_k([(Y, \omega_Y)]), \, [(Z, \omega_Z)]\mapsto f_*[(Z, \omega_Z)]
\end{align*}
is well-defined, $\mathbb{Z}$-linear and $f_*[(Z, \omega_Z)]$ is called \textit{push-forward of $[(Z, \omega_Z)]$ along $f$}.
\end{definition}

\begin{definition}[Pull-back of Cartier divisors]
Let $[(X, \omega_X)]\in Z^{\textrm{aff}}_m(V)$ and $[(Y, \omega_Y)]\in Z^{\textrm{aff}}_n(V')$ be cycles in $V=\Gamma\otimes_{\mathbb{Z}}\mathbb{R}$ and $V'=\Gamma'\otimes_{\mathbb{Z}}\mathbb{R}$. Let $f:[(X, \omega_X)]\to [(Y, \omega_Y)]$ be a morphism. The map
\begin{align*}
\operatorname{Div}([(Y, \omega_Y)])&\to\operatorname{Div}([(X, \omega_X)])\\
[h]&\mapsto f^*[h]:=[h\circ f]
\end{align*}
is well-defined, $\mathbb{Z}$-linear and $f^*[h]$ is called \textit{pull-back of $[h]$ along $f$}.
\end{definition}

\begin{proposition}[Projection formula]\label{prop:projection_formula}
Let $[(X, \omega_X)]\in Z^{\operatorname{aff}}_m(V)$ and $[(Y, \omega_Y)]\in Z^{\operatorname{aff}}_n(V')$ be cycles in $V=\Gamma\otimes_{\mathbb{Z}}\mathbb{R}$ and $V'=\Gamma'\otimes_{\mathbb{Z}}\mathbb{R}$. Let $f:[(X, \omega_X)]\to [(Y, \omega_Y)]$ be a morphism. Let $[(Z, \omega_Z)]\in Z^{\operatorname{aff}}_k([(X,\omega_X)])$ be a cycle and let $\varphi\in\operatorname{Div}\left([(Y, \omega_Y)]\right)$. Then the equality
\begin{align*}
\varphi\cdot\left( f_{*}[(Z, \omega_Z)] \right)=f_{*}\left( f^{*}\varphi\cdot [(Z, \omega_Z)] \right)\in Z^{\operatorname{aff}}_{k-1}([(Y, \omega_Y)])
\end{align*}
holds.
\end{proposition}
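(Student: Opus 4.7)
The plan is to reduce to the case $[(Z,\omega_Z)] = [(X,\omega_X)]$ (since both sides are defined by restriction to the support of $Z$), fix good representatives, and then compare weights cone-by-cone on the codimension-one skeleton of $f_*X$. More precisely, I would first refine $X$ (and correspondingly $Y$ and $f_*X$) so that $f$ is $\mathbb{Z}$-linear on each cone of $X$ and so that a representative $\varphi$ of the given Cartier divisor class is integer affine linear on each cone of $Y$ containing $f(\sigma)$ for every $\sigma \in X$. After this refinement, both $f^{*}\varphi$ and its restriction to $X$ are affine linear on every cone of $X$, so the divisors appearing on either side of the claimed equality are genuinely supported on the respective codimension-one skeleta and their weights are given by the formula in Definition \ref{definition:Assoc_Weil_Div}.

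Next, I would fix a codimension-one cone $\tau'$ of $f_{*}X$ and compute the weight of $\tau'$ in both sides. For the left-hand side, the weight is the sum over $\sigma' > \tau'$ in $f_*X$ of $\varphi_{\sigma'}(\omega_{f_*X}(\sigma')\,v_{\sigma'/\tau'})$ minus the correction $\varphi_{\tau'}$ applied to $\sum_{\sigma'>\tau'}\omega_{f_*X}(\sigma')\,v_{\sigma'/\tau'}$. For the right-hand side, one first forms $f^{*}\varphi\cdot[(X,\omega_X)]$, which on a codimension-one $\tau$ of $X$ gives the analogous sum using $\varphi\circ f$, and then pushes this codimension-$(k-1)$ cycle forward along $f$. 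The key bookkeeping is to index these contributions by the cones $\tau$ of $X$ with $f(\tau)\subseteq \tau'$ and $\dim f(\tau)=\dim\tau'$, grouped according to the adjacent facets $\sigma>\tau$ with $f(\sigma)=\sigma'>\tau'$; cones on which $f$ drops dimension contribute nothing to the push-forward because they are not contained in a maximal cone of $X$ on which $f$ is injective.

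The core identity needed for the comparison is the normal-vector relation
\begin{align*}
f_{*}(\omega_X(\sigma)\,v_{\sigma/\tau}) \;=\; \omega_X(\sigma)\,\bigl[\Gamma'_{f(\sigma)}:f(\Gamma_\sigma)\bigr]\,v_{f(\sigma)/f(\tau)} \pmod{\langle f(\tau)\rangle_{\mathbb{R}}},
\end{align*}
together with the definition $\omega_{f_*X}(\sigma')=\sum_{f(\sigma)=\sigma'}\omega_X(\sigma)\,[\Gamma'_{\sigma'}:f(\Gamma_\sigma)]$. Inserting these into the right-hand side and using that $\varphi_{\sigma'}\circ f = (f^{*}\varphi)_{\sigma}$ on $\sigma$ (and similarly for $\tau'$), the lattice indices reassemble the push-forward weights, so each summand on the right matches a summand on the left. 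For the $\varphi_{\tau'}$-correction term, balancedness of $X$ along each codimension-one cone of $X$ lying over the relative interior of $\tau'$ (but not already counted, i.e.\ those $\tau$ with $f(\tau)$ of dimension $k$) guarantees that their contributions cancel in $V'/\langle\tau'\rangle_{\mathbb{R}}$, leaving exactly the correction term on the left.

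The main obstacle I expect is precisely this lattice-index and balancedness accounting: the cones $\tau$ of $X$ that map to $\tau'$ but for which $f|_\sigma$ already has image $\tau'$ for the adjacent facet $\sigma$ (so $f$ drops dimension on $\sigma$) must be shown to contribute nothing after push-forward, while ensuring that the sum of normal vectors produced by the genuinely contributing $\tau$ matches $\sum_{\sigma'>\tau'}\omega_{f_*X}(\sigma')\,v_{\sigma'/\tau'}$ modulo $\langle\tau'\rangle_{\mathbb{R}}$. Handling this cleanly is essentially the content of the push-forward construction and is what makes the equality well-defined at the level of refinement classes; once this is in place, bilinearity of the intersection product and $\mathbb{Z}$-linearity of $f_*$ and $f^*$ extend the identity from $[(X,\omega_X)]$ to an arbitrary subcycle $[(Z,\omega_Z)]$.
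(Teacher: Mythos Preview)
The paper does not prove this proposition at all: it is stated in the preliminaries section as a known result from tropical intersection theory, with the surrounding material referencing \cite{FirstStepsIntersectionTheory}, \cite{Rau}, \cite{Allermann} and related sources. Your sketch is essentially the standard argument one finds in those references (refine so that $\varphi$ and $f$ are linear on cones, compare weights on codimension-one cones of $f_*X$ using the lattice-index identity for normal vectors, and use balancing to handle the contributions from cones on which $f$ drops dimension), so there is nothing to compare against in this paper itself.
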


So far, we introducted affine cycles only. Affine cycles are building blocks of \textit{abstract} cycles. Since the whole ``affine-to-abstract"-procedure is quite technical, we omit it here and refer to section 5 of \cite{FirstStepsIntersectionTheory} instead. We want to remark that the projection formula (Proposition \ref{prop:projection_formula}) also holds for abstract cycles.
For our purposes the following definition of abstract cycles is sufficient:

\begin{definition}[Abstract cycles]
An \textit{abstract $k$-cycle} $C$ is a class under a refinement relation of a balanced polyhedral complex of pure dimension $k$ which is locally isomorphic to tropical fans.
\end{definition}

\begin{remark}[Rational functions on abstract cycles]
In the same way rational functions on affine cycles led to an affine intersection product, one can also consider \textit{rational functions on abstract cycles} to obtain a intersection product.
Again, we want to omit technicalities and refer to Definition 6.1 of \cite{FirstStepsIntersectionTheory} instead. The main point of considering rational functions on abstract cycles is that they are no longer piecewiese linear but pieceweise affine linear.

As we see below, it happens that we start with an affine cycle $[(X,\omega_X)]$ and want to intersect it with a rational function $f$ that is pieceweise affine linear. In order to do so, we need to refine $[(X,\omega_X)]$ in such a way that $f$ is linear on faces. Hence $[(X,\omega_X)]$ becomes a polyhedral complex which is a representative of an abstract cycle $C$. Then we can intersect $f$ with $C$.
\end{remark}

In the following we want to restrict to tropical intersection theory on $\mathbb{R}^n$.

\begin{definition}[Degree map]\label{definition:degree_map}
Let $A_0(\mathbb{R}^n)$ denote the set of abstract $0$-cycles in $\mathbb{R}^n$ up to rational equivalence. The map
\begin{align*}
\operatorname{deg}:A_0(\mathbb{R}^n) &\to \mathbb{Z}\\
[\omega_1 P_1+\dots+\omega_r P_r] &\mapsto \sum_{i=1}^r \omega_i
\end{align*}
is a well-defined morphism and for $D\in A_0(\mathbb{R}^n)$ the number $\operatorname{deg}(D)$ is called the \textit{degree of $D$}. 
\end{definition}

\subsection{Tropical moduli spaces}
This subsection collects background from \cite{MikhalkinCRC, KontsevichPaper,GathmannKerberMarkwig}.

\begin{definition}[Moduli space of abstract rational tropical curves]
We use Notation \ref{notation:underlined_symbols}. An \textit{abstract rational tropical curve} is a metric tree $\Gamma$ with unbounded edges called \textit{ends} and with $\val(v)\geq 3$ for all vertices $v\in\Gamma$. It is called $N$-\textit{marked abstract tropical curve} $(\Gamma,x_{[N]})$ if $\Gamma$ has exactly $N$ ends that are labeled with pairwise different $x_1,\dots,x_N\in\mathbb{N}$. Two $N$-marked tropical curves $(\Gamma,x_{[N]})$ and $(\tilde{\Gamma},\tilde{x}_{[N]})$ are isomorphic if there is a homeomorphism $\Gamma\to \tilde{\Gamma}$ mapping $x_i$ to $\tilde{x}_i$ for all $i$ and each edge of $\Gamma$ is mapped onto an edge of $\tilde{\Gamma}$ by an affine linear map of slope $\pm 1$. The set $\mathcal{M}_{0,N}$ of all $N$-marked tropical curves up to isomorphism is called \textit{moduli space of $N$-marked abstract tropical curves}. Forgetting all lengths of an $N$-marked tropical curve gives us its \textit{combinatorial type}.
\end{definition}

\begin{remark}[$\mathcal{M}_{0,N}$ is a tropical fan]
The moduli space $\mathcal{M}_{0,N}$ can explicitly be embedded into a $\mathbb{R}^t$ such that $\mathcal{M}_{0,N}$ is a tropical fan of pure dimension $N-3$ with its fan structure given by combinatorial types and all its weights are one, i.e. $\mathcal{M}_{0,n}$ represents an affine cycle in $\mathbb{R}^t$. This allows us to use tropical intersection theory on $\mathcal{M}_{0,n}$. For an example, see Figure \ref{Example_M_0_4}.
\end{remark}

\begin{figure}[H]
\centering
\def\svgwidth{150pt}
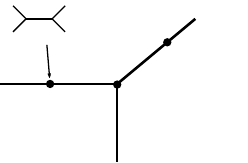
\caption{One way of embedding the moduli space $\mathcal{M}_{0,4}$ into $\mathbb{R}^2$ centered at the origin of $\mathbb{R}^2$. The length of a bounded edge of an abstract tropical curve depicted above is given by the distance of the point in $\mathcal{M}_{0,4}$ corresponding to this curve from the origin of $\mathbb{R}^2$. The ends of $\mathcal{M}_{0,4}$ correspond to different distributions of labels on ends of abstract tropical curves with four ends. All cases are $(12|34), (13|24), (14|23)$.}
\label{Example_M_0_4}
\end{figure}

\begin{definition}[Degree]\label{def:degree}
A tuple $(\Delta,l)$ consisting of a finite multiset $\Delta$ and a map $l$ is called a \textit{degree} in $\mathbb{R}^m$ if:
\begin{itemize}
\item[(1)] Each entry $v$ of $\Delta$ is a nonzero element of $\mathbb{Z}^m$ such that $\sum_{v\in\Delta} v=0$ and $\langle v \mid v\in \Delta \rangle =\mathbb{R}^m$.
\item[(2)] Each entry of $\Delta$ is equipped with a unique natural number called \textit{label}, i.e. the given map $l:\Delta\to [\#\Delta ]$ is a bijection.
\end{itemize}
Let $v=(v_1,\dots,v_m)\in\Delta$, then $\gcd(v_1,\dots,v_m)$ is called \textit{weight} of $v$. Most of the time the map $l$ is suppressed in the notation, i.e. we usually write $\Delta$ and assume that elements of $\Delta$ are labeled.
\end{definition}

\begin{notation}\label{notation:standard_directions_and_alpha_beta_degrees}
The following degrees are used often throughout the paper:
Let $e_i:=(\delta_{ti})_{t=1,\dots,m}$ for $i=1,\dots,m$ denote the vectors of the standard basis of $\mathbb{R}^m$, and define $e_0:=\sum_{i=1}^m e_i$. We call $e_0,-e_1,\dots,-e_m$ \textit{standard directions} of $\mathbb{R}^m$.

For $m\in\mathbb{N}_{>0}$ and $d\in\mathbb{N}$, we define the degree $\Delta_d^m$ to be the multiset consisting of $d$ copies of $e_0$ and $d$ copies of each $-e_i$ for $i=1,\dots,m$.

Let $\alpha:=(\alpha_i)_{i\in\mathbb{N}_{>0}}$ and $\beta:=(\beta_i)_{i\in\mathbb{N}_{>0}}$ be two sequences with $\alpha_i,\beta_i\in\mathbb{N}_{>0}$ such that $|\alpha|:=\sum_{i\in\mathbb{N}_{>0}}\alpha_i$ and $|\beta|:=\sum_{i\in\mathbb{N}_{>0}}\beta_i$ are finite.
Let $d\in\mathbb{N}$ such that $d-\sum_{i\in\mathbb{N}_{>0}}i\cdot\alpha_i+\sum_{i\in\mathbb{N}_{>0}}i\cdot\beta_i=0$ and define
\begin{align*}
\Delta_d^m(\alpha,\beta):= \Delta_d^m\backslash \underbrace{\lbrace -e_m,\dots,-e_m \rbrace}_{d \textrm{ many}} \cup
\bigcup_{i\in\mathbb{N}_{>0}} \underbrace{\lbrace i\cdot (-e_m),\dots, i\cdot (-e_m) \rbrace}_{\alpha_i \textrm{ many}} \cup
\bigcup_{i\in\mathbb{N}_{>0}} \underbrace{\lbrace i\cdot e_m,\dots, i\cdot e_m \rbrace}_{\beta_i \textrm{ many}},
\end{align*}
where unions are actually unions of multisets.
\end{notation}

\begin{definition}[Moduli space of rational tropical stable maps to $\mathbb{R}^m$]\label{definition:moduli_stable_maps}
Let $(\Delta,l)$ be a degree in $\mathbb{R}^m$ as in Definition \ref{def:degree} and let $n\in\mathbb{N}$. A \textit{rational tropical stable map of degree $(\Delta,l)$ to $\mathbb{R}^m$ with $n$ contracted ends} is a tuple $(\Gamma,x_{[N]},h)$, where $(\Gamma,x_{[N]})$ is an $N$-marked abstract tropical curve with $N=n+\#\Delta$, $x_{[N]}=[N]$ and a map $h:\Gamma\to\mathbb{R}^m$ that satisfies the following:
\begin{itemize}
\item[(a)]
Let $e\in\Gamma$ be an edge with length $l(e)\in [0,\infty]$, identify $e$ with $[0,l(e)]$ and denote the vertex of $e$ that is identified with $0\in [0,l(e)]=e$ by $V$. The map $h$ is integer affine linear, i.e. $h\mid_e:t\mapsto tv+a$ with $a\in\mathbb{R}^m$ and $v(e,V):=v\in\mathbb{Z}^m$, where $v(e,V)$ is called \textit{direction vector of $e$ at $V$} and the \textit{weight} of an edge (denoted by $\omega(e)$) is the $\gcd$ of the entries of $v(e,V)$. The vector $\frac{1}{\omega(e)}\cdot v(e,V)$ is called the \textit{primitive} direction vector of $e$ at $V$. If $e=x_i\in\Gamma$ is an end, then $v(x_i)$ denotes the direction vector of $x_i$ pointing away from its one vertex it is adjacent to.
\item[(b)]
The direction vector $v(x_i)$ of an end labeled with $x_i$ is given by $0\in\mathbb{R}^m$ if $x_i\in [n]$. Otherwise, $n<x_i\leq n+\#\Delta$ and $v(x_i)$ equals the unique $v\in\Delta$ with $l(v)+n=x_i$.
Ends with direction vector zero are called \textit{contracted ends}.
\item[(c)]
The \textit{balancing condition}
\begin{align*}
\sum_{\substack{e\in\Gamma\textrm{ an edge}, \\ V \textrm{ vertex of }e}}v(e,V)=0
\end{align*}
holds for every vertex $V\in\Gamma$.
\end{itemize}
Two rational tropical stable maps of degree $d$ with $n$ contracted ends, namely $(\Gamma , x_{[N]},h)$ and $(\Gamma' ,x'_{[N]},h')$, are isomorphic if there is an isomorphism $\varphi$ of their underlying $N$-marked tropical curves such that $h'\circ\varphi=h$.
The set $\mathcal{M}_{0,n}\left(\mathbb{R}^m,\Delta\right)$ of all (rational) tropical stable maps of degree $\Delta$ to $\mathbb{R}^m$ with $n$ contracted ends up to isomorphism is called \textit{moduli space of (rational) tropical stable maps of degree $\Delta$ to $\mathbb{R}^m$ (with $n$ contracted ends)}.
\end{definition}

\begin{notation}\label{notation:projection_moduli_spaces}
See Notation \ref{notation:standard_directions_and_alpha_beta_degrees} for the following: The projection
\begin{align*}
\pi:\mathbb{R}^m\to\mathbb{R}^{m-1}, (x_1,\dots,x_m)\mapsto (x_1,\dots,x_{m-1})
\end{align*}
induces a map 
\begin{align*}
\tilde{\pi}:\mathcal{M}_{0,1}\left(\mathbb{R}^m,\Delta_d^m(\alpha,\beta) \right)\to \mathcal{M}_{0,1+|\alpha|+|\beta|}\left(\mathbb{R}^{m-1},\Delta_d^{m-1} \right),
\end{align*}
where ends in $\Delta_d^m(\alpha,\beta)\backslash \left(\Delta_d^m\backslash \lbrace -e_m,\dots,-e_m \rbrace\right)$ are contracted. So $\tilde{\pi}$ induces labels on contracted ends by contracting labeled ends of direction parallel to $\pm e_m$. To emphasize how non-contracted ends are labeled, we write $\mathcal{M}_{0,1+|\alpha|+|\beta|}\left(\mathbb{R}^{m-1},\pi\left( \Delta^{m}_d(\alpha,\beta) \right) \right)$ instead of $\mathcal{M}_{0,1+|\alpha|+|\beta|}\left(\mathbb{R}^{m-1},\Delta_d^{m-1} \right)$. Moreover, we write $\alpha^{\la}$ (resp. $\beta^{\la}$) to refer to the set of labels associated to ends in $\alpha$ (resp. $\beta$).
\end{notation}

\begin{remark}[$\mathcal{M}_{0,n}\left(\mathbb{R}^m,\Delta\right)$ is a fan]\label{remark:identification:stable_maps_abstract_maps}
The map
\begin{align*}
\mathcal{M}_{0,n}\left(\mathbb{R}^m,\Delta\right) &\to \mathcal{M}_{0,N}\times\mathbb{R}^m\\
(\Gamma,x_{[N]},h) &\mapsto \left(\left(\Gamma,x_{[N]}\right),h(x_1)\right)
\end{align*}
with $N=n+\#\Delta$ is bijective and $\mathcal{M}_{0,n}\left(\mathbb{R}^m,\Delta\right)$ is a tropical fan of dimension $\#\Delta+n-3+m$. Hence $\mathcal{M}_{0,n}\left(\mathbb{R}^m,\Delta\right)$ represents an affine cycle in a $\mathbb{R}^t$. This allows us to use tropical intersection theory on $\mathcal{M}_{0,n}\left(\mathbb{R}^m,\Delta\right)$.
\end{remark}

\begin{definition}[Evaluation maps]\label{def:ev_map}
For $i\in [n]$ the map
\begin{align*}
\operatorname{ev}_i:\mathcal{M}_{0,n}\left(\mathbb{R}^m,\Delta\right) &\to \mathbb{R}^m\\
(\Gamma,x_{[N]},h) &\mapsto h(x_i)
\end{align*}
is called \textit{$i$-th evaluation map}. Under the identification from Remark \ref{remark:identification:stable_maps_abstract_maps} the $i$-th evaluation map is a morphism of fans $\operatorname{ev}_i:\mathcal{M}_{0,N}\times\mathbb{R}^m \to \mathbb{R}^m$. This allows us to pull-back cycles via the evaluation map.
\end{definition}

\begin{definition}[Forgetful maps]\label{def:ft_map}
For $N\geq4$ the map
\begin{align*}
\operatorname{ft}_{x_{[N-1]}}:\mathcal{M}_{0,N}&\to\mathcal{M}_{0,N-1}\\
(\Gamma,x_{[N]}) &\mapsto (\Gamma',x_{[N-1]}),
\end{align*}
where $\Gamma'$ is the stabilization (straighten $2$-valent vertices) of $\Gamma$ after removing its end marked by $x_N$ is called the $N$-th \textit{forgetful map}. Applied recursively, it can be used to forget several ends with markings in $I^C\subset x_{[N]}$, denoted by $\operatorname{ft}_I$, where $I^C$ is the complement of $I\subset x_{[N]}$. With the identification from Remark \ref{remark:identification:stable_maps_abstract_maps}, and additionally forgetting the map $h$ to the plane, we can also consider 
\begin{align*}
\operatorname{ft}_I:\mathcal{M}_{0,n}\left(\mathbb{R}^m,\Delta\right) &\to\mathcal{M}_{0,|I|}\\
(\Gamma,x_{[N]},h) &\mapsto \operatorname{ft}_I(\Gamma,x_i \mid i\in I).
\end{align*}
\end{definition}
Any forgetful map is a morphism of fans. This allows us to pull-back cycles via the forgetful map.

\begin{definition}[Tropical curves and multi lines]\label{def:tropical_line}
A \textit{tropical curve $C$ of degree $\Delta$} is the abstract $1$-dimensional cycle a rational tropical stable map of degree $\Delta$ gives rise to, i.e. $C$ is an embedded $1$-dimensional polyhedral complex in $\mathbb{R}^m$. A \textit{(tropical) multi line} $L$ is a tropical rational curve in $\mathbb{R}^{m}$ with $m+1$ ends such that the primitive direction of each of this ends is one of the \textit{standard directions} of $\mathbb{R}^m$, see Notation \ref{notation:standard_directions_and_alpha_beta_degrees}. The weight with which an end of $L$ appears is denoted by $\omega(L)$.
\end{definition}

\subsection{Enumerative meaning of tropical intersection products}
As indicated in the last section, tropical intersection theory can be applied to the tropical moduli spaces that are interesting for us.
In the present paper tropical intersection theory provides the overall framework in which we work but the most important aspects from this machinary are the following:

\begin{remark}[Enumerative meaning of our tropical intersection products]\label{remark:enumerative_meaning_intersection_product}
Throughout this paper, we consider intersection products of the form $\varphi_1^*(Z_1)\cdots\varphi_r^*(Z_r)\cdot \mathcal{M}_{0,n}\left(\mathbb{R}^m,\Delta\right)$, where $\varphi_i$ is either an evaluation map $\ev_i$ from Definition \ref{def:ev_map} or a forgetful map $\ft_I$ to $\mathcal{M}_{0,4}$ from Definition \ref{def:ft_map}, and $Z_i$ is a cycle we want to pull-back via $\varphi_i$ for $i\in [r]$.
Notice that $\ev_i$ is a map to $\mathbb{R}^m$ while $\ft_I$ is a map to $\mathcal{M}_{0,4}$.
Using a projection $\tilde{\pi}:\mathcal{M}_{0,4}\to\mathbb{R}$ as in Remark 2.2 of \cite{CR1} and considering $\tilde{\pi}\circ\ft_I$ instead of $\ft_I$ does not affect $\varphi_1^*(Z_1)\cdots\varphi_r^*(Z_r)\cdot \mathcal{M}_{0,n}\left(\mathbb{R}^m,\Delta\right)$ since
\begin{align*}
\left( \tilde{\pi}\circ\ft_I\right)^*(\tilde{Z}_i)&=\ft_I^*\left( \tilde{\pi}^*(\tilde{Z}_i)\right)\\
&=\ft_I^*(Z_i)
\end{align*}
holds for a suitable cycle $\tilde{Z}_i$. Thus all our maps can be treated as maps to $\mathbb{R}^t$ for suitable $t$. Hence Proposition 1.15 of \cite{JohannesIntersectionsonTropModuliSpaces} can be applied, and together with Proposition 1.12 of \cite{JohannesIntersectionsonTropModuliSpaces} and Lemma 2.11 of \cite{CR1} it follows that the support of the intersection product $\varphi_1^*(Z_1)\cdots\varphi_r^*(Z_r)\cdot \mathcal{M}_{0,n}\left(\mathbb{R}^m,\Delta\right)$ equals $\varphi_1^{-1}(Z_1)\cap\dots \cap \varphi_r^{-1}(Z_r)$. Hence this intersection product gains an enumerative meaning if it is $0$-dimensional. More precisely, each point in such an intersection product corresponds to a tropical stable map that satisfies certain conditions that are given by the cycles $Z_i$ for $i\in [r]$.
\end{remark}

The weights of such intersection products $\varphi_1^*(Z_1)\cdots\varphi_r^*(Z_r)\cdot \mathcal{M}_{0,n}\left(\mathbb{R}^m,\Delta\right)$ are discussed within the next section. Before proceeding with the next section, we want to briefly recall the concept of rational equivalence that is then frequently used in this paper.

\begin{remark}[Rational equivalence]\label{remark:facts_about_rational_equivalence}
When considering cycles $Z_i$ as in Remark \ref{remark:enumerative_meaning_intersection_product} that are conditions we impose on tropical stable maps, then we usually want to ensure that a $0$-dimensional cycle $\varphi_1^*(Z_1)\cdots\varphi_r^*(Z_r)\cdot \mathcal{M}_{0,n}\left(\mathbb{R}^m,\Delta\right)$ is independent of the exact positions of the conditions $Z_i$ for $i\in [r]$. This is where \textit{rational equivalence} comes into play. We usually consider cycles like $Z_i$ up to a rational equivalence relation. 
The most important facts about this relation are the following:
\begin{itemize}
\item[(a)]
Two cycles $Z,Z'$ in $\mathbb{R}^m$ that only differ by a translation are rationally equivalent.
\item[(b)]
Pull-backs $\varphi^*(Z),\varphi^*(Z')$ of rationally equivalent cycles $Z,Z'$ are rationally equivalent.
\item[(c)]
The \textit{degree} of a $0$-dimensional intersection product which is defined as the sum of all weights of all points in this intersection product is compatible with rational equivalence, i.e. if two $0$-dimensional intersection products are rationally equivalent, then their degrees are the same.
\end{itemize}
Notice that (a)-(c) allows us to ``move" all conditions we consider slightly without affecting a count of tropical stable maps we are interested in.
\end{remark}

Another fact about rational equivalence is the following:

\begin{remark}[Recession fan]\label{remark:recession_fan}
We use Notation \ref{notation:standard_directions_and_alpha_beta_degrees}. Each plane tropical curve $C$ of degree $\Delta_d^2$ is rationally equivalent to a multi line $L_C$ with weights $\omega(L_C)=d$. Hence pull-backs of $C$ and $L_C$ along the evaluation maps are rationally equivalent. The multi line $L_C$ is also called \textit{recession fan of $C$}.
\end{remark}

\subsection{Cross-ratios and their multiplicities}

\begin{definition}\label{def:trop_CR}
A \textit{(tropical) cross-ratio} $\lambda'$ is an unordered pair of pairs of unordered numbers $\left(\beta_1\beta_2|\beta_3\beta_4\right)$ together with an element in $\mathbb{R}_{>0}$ denoted by $|\lambda'|$, where $\beta_1,\dots,\beta_4$ are labels of pairwise distinct ends of a tropical stable map in $\mathcal{M}_{0,n}\left(\mathbb{R}^m,\Delta \right)$. We say that $C\in\mathcal{M}_{0,n}\left(\mathbb{R}^m,\Delta \right)$ satisfies the cross-ratio constraint $\lambda'$ if $C\in\ft^*_{\lambda'}\left(|\lambda'| \right)\cdot \mathcal{M}_{0,n}\left(\mathbb{R}^m,\Delta \right)$, where $|\lambda'|$ is the canonical local coordinate of the ray $\left(\beta_1\beta_2|\beta_3\beta_4\right)$ in $\mathcal{M}_{0,4}$.

A \textit{degenerated (tropical) cross-ratio} $\lambda$ is defined as a set $\lbrace \beta_1,\dots,\beta_4\rbrace$, where $\beta_1,\dots,\beta_4$ are pairwise distinct labels of ends of tropical stable map in $\mathcal{M}_{0,n}\left(\mathbb{R}^m,\Delta \right)$. We say that $C\in\mathcal{M}_{0,n}\left(\mathbb{R}^m,\Delta \right)$ satisfies the degenerated cross-ratio constraint $\lambda$ if $C\in\ft^*_\lambda\left(0 \right)\cdot \mathcal{M}_{0,n}\left(\mathbb{R}^m,\Delta \right)$. A degenerated cross-ratio arises from a non-degenerated cross-ratio by taking $|\lambda'|\to 0$ (see \cite{CR1} for more details). We refer to $\lambda$ as \textit{degeneration} of $\lambda'$ in this case.
\end{definition}


\begin{definition}\label{def:partial_ev}
Define the linear maps $\partial\ev_k:\mathcal{M}_{0,n}\left(\mathbb{R}^m,\Delta_d^m(\alpha,\beta)\right)\to\mathbb{R}^2$ by $\partial\ev_k:=\pi\circ\ev_k$, where $\pi$ is the projection from Notation \ref{notation:projection_moduli_spaces} and $k\in\alpha^{\la}$ or $k\in\beta^{\la}$.
\end{definition}

We use the maps $\partial\ev_k$ to either pull-back points (usually denoted by $P_f$) in $\mathbb{R}^{m-1}$, or tropical multi lines (usually denoted by $L_k$) in $\mathbb{R}^{m-1}$. If we pull-back conditions with $\partial\ev_k$, we refer to these conditions as \textit{tangency conditions}, where, in particular, we refer to $P_f$ as codimension one tangency condition and to $L_k$ as codimension two tangency condition.
All conditions we are interested in are point conditions, tangency conditions and cross-ratio conditions.

\begin{definition}[General position]\label{def:general_pos}
Let $\Delta^m_d(\alpha,\beta)$ be a degree as in Notation \ref{notation:standard_directions_and_alpha_beta_degrees}. Let $\lambda_{[\tilde{l}]}$ be degenerated tropical cross-ratios for some $\tilde{l}\in\mathbb{N}$, let $\mu'_{[l']}$ be non-degenerated tropical cross-ratios for some $l'\in\mathbb{N}$, and let $p_{[n]}\in\mathbb{R}^m$ be points for some $n\in\mathbb{N}_{>0}$. Let $\underline{\eta}^\gamma\subset\gamma^{\la}$ and $\underline{\kappa}^\gamma\subset\gamma^{\la}$ for $\gamma=\alpha,\beta$ be pairwise disjoint sets of labels. Let $P_{\underline{\eta}^\gamma}\in\mathbb{R}^{m-1}$ be points for $\gamma=\alpha,\beta$ and let $L_{\underline{\kappa}^\gamma}$ be tropical multi lines in $\mathbb{R}^{m-1}$ for $\gamma=\alpha,\beta$ such that
\begin{align}\label{eq:def:gen_pos_dimension}
\#\Delta^m_d(\alpha,\beta)-3+m=(m-1)n+\tilde{l}+l'+(m-1)\cdot\#\underline{\eta}+(m-2)\cdot\#\underline{\kappa}.
\end{align}
holds, we say that these conditions are in \textit{general position} if
\begin{align*}
&Z_{\Delta_d^m(\alpha,\beta)}\left(p_{[n]},L_{\underline{\kappa}^\alpha},L_{\underline{\kappa}^\beta},P_{\underline{\eta}^\alpha},P_{\underline{\eta}^\beta},\lambda_{[\tilde{l}]},\mu'_{[l']} \right)
:=\\
&\prod_{k\in \underline{\kappa}^\alpha \cup \underline{\kappa}^\beta} \partial\ev_k^{*}(L_k)
\cdot
\prod_{f\in \underline{\eta}^\alpha\cup \underline{\eta}^\beta} \partial\ev_f^{*}(P_f)
\cdot
\prod_{i=1}^n \ev_i^*\left( p_i\right)
\cdot
\prod_{j'=1}^{l'} \ft_{\mu_{j'}}^*\left( |\mu'_{j'}|\right)
\cdot
\prod_{\tilde{j}=1}^{\tilde{l}} \ft_{\lambda_{\tilde{j}}}^*\left( 0\right)
\cdot
\mathcal{M}_{0,n}\left(\mathbb{R}^m,\Delta_d^m(\alpha,\beta)\right)
\end{align*}
is a zero-dimensional nonzero cycle that lies inside top-dimensional cells of
\begin{align*}
X:=\prod_{\tilde{j}=1}^{\tilde{l}} \ft_{\lambda_{\tilde{j}}}^*\left( 0\right)\cdot\mathcal{M}_{0,n}\left(\mathbb{R}^m,\Delta_d^m(\alpha,\beta)\right).
\end{align*}
\end{definition}

Roughly speaking, $n$ point conditions, $\kappa$ tangency conditions which arise as pull-backs of tropical curves, $\eta$ tangency conditions which arise as pull-backs of points and $l$ (degenerated) tropical cross-ratio conditions are in general position if there are finitely many tropical stable maps of degree $\Delta$ with $c$ contracted ends in $\mathbb{R}^m$ satisfying them and
\begin{align}\label{eq:general_dimension_count_arbitrary_contracted_ends}
\#\Delta+c-3+m=mn+l+(m-1)\eta+(m-2)\kappa
\end{align}
holds.
Often, there are precisely as many contracted ends as point conditions such that \eqref{eq:general_dimension_count_arbitrary_contracted_ends} becomes
\begin{align}\label{eq:general_dimension_count}
\#\Delta-3+m=(m-1)n+l+(m-1)\eta+(m-2)\kappa,
\end{align}
which is exactly \eqref{eq:def:gen_pos_dimension}.

\begin{notation}\label{notation:convention_conditions_related_to_ends}
Notice that in Definition \ref{def:general_pos} a convention is used to which we stick from now on: Given a degree $\Delta^m_d(\alpha,\beta)$ and general positioned conditions, we know which conditions we expect to be satisfied by which labeled ends as we use the same index for conditions and $\ev$ (resp. $\partial\ev$) maps. In particular, we may e.g. consider a submultiset of $\Delta^m_d(\alpha,\beta)$ which contains all ends satisfying the tangency conditions $L_{\underline{\kappa}^\alpha}\cup L_{\underline{\kappa}^\beta}$.
\end{notation}

\begin{remark}
Given an intersection product as in Definition \ref{def:general_pos}, where $L_k$ is a rational tropical curve in $\mathbb{R}^{m-1}$ whose ends are of standard direction, we can pass to the recession fan $\rec(L_k)$ of $L_k$ and obtain an intersection product that is rationally equivalent to the one we started with \cite{Allermann}. Therefore we can always assume that $L_k$ is in fact a tropical multi line in $\mathbb{R}^{m-1}$, see Definition \ref{def:tropical_line}.
\end{remark}

We assume in the following that all conditions are in general position, and if we refer to a set of conditions to be in general condition although this set has not enough elements, then we mean that there are some conditions that we can add to this set such that all together these conditions are in general position.

\subsection{Correspondence Theorem and previous results}

\begin{definition}\label{def:numbers_of_interest}
We use Notation \ref{notation:underlined_symbols}, \ref{notation:standard_directions_and_alpha_beta_degrees}. For general positioned condition as in Definition \ref{def:general_pos}, where we additionally require from the tropical cross-ratios $\lambda_{[\tilde{l}]},\mu'_{[l']}$ that each entry of a tropical cross-ratio is a label of a contracted end or a label of an end whose primitive direction is $\pm e_m\in\mathbb{R}^m$. We define
\begin{align*}
N_{\Delta_d^m(\alpha,\beta)}\left(p_{[n]},L_{\underline{\kappa}^\alpha},L_{\underline{\kappa}^\beta},P_{\underline{\eta}^\alpha},P_{\underline{\eta}^\beta},\lambda_{[\tilde{l}]},\mu'_{[l']} \right)
:=
\deg\left( Z_{\Delta_d^m(\alpha,\beta)}\left(p_{[n]},L_{\underline{\kappa}^\alpha},L_{\underline{\kappa}^\beta},P_{\underline{\eta}^\alpha},P_{\underline{\eta}^\beta},\lambda_{[\tilde{l}]},\mu'_{[l']} \right) \right),
\end{align*}
where $\deg$ is the degree function that sums up all multiplicites of the points in the intersection product, see Definition \ref{definition:degree_map}. In other words, $N_{\Delta_d^m(\alpha,\beta)}\left(p_{[n]},L_{\underline{\kappa}^\alpha},L_{\underline{\kappa}^\beta},P_{\underline{\eta}^\alpha},P_{\underline{\eta}^\beta},\lambda_{[\tilde{l}]},\mu'_{[l']} \right)$ is the number of rational tropical stable maps to $\mathbb{R}^m$ (counted with multiplicity) of degree $\Delta_d^m(\alpha,\beta)$ satisfying the tropical cross-ratios $\lambda_{[\tilde{l}]},\mu'_{[l']}$, the tangency conditions $L_{\underline{\kappa}^\alpha},L_{\underline{\kappa}^\beta},P_{\underline{\eta}^\alpha},P_{\underline{\eta}^\beta}$ and point conditions $p_{\underline{n}}$. If we write $N_{\Delta_d^m(\alpha,\beta)}\left(p_{[n]},\lambda_{[\tilde{l}]},\mu'_{[l']}\right)$, we mean that there are no tangency conditions in the set of given conditions.
\end{definition}

\begin{remark}
The numbers of Definition \ref{def:numbers_of_interest} are independent of the exact positions of points $p_{[n]}$, points $P_{\underline{\eta}^\alpha},P_{\underline{\eta}^\beta}$ and lines $L_{\underline{\kappa}^\alpha},L_{\underline{\kappa}^\beta}$ as long as all conditions are in general position. Moreover, 
they are independent of the exact nonzero lengths of the non-degenerated tropical cross-ratios $\mu'_{[l']}$.
\end{remark}

\begin{theorem}[Correspondence Theorem 5.1 of \cite{IlyaCRC}]\label{thm:correspondence_thm}
Let $\Delta_d^m(\alpha,\beta)$ be a degree as in Notation \ref{notation:standard_directions_and_alpha_beta_degrees}. Consider rational algebraic curves in the toric variety associated to the fan $\Delta_d^m(\alpha,\beta)$ as in \cite{BrugalleMarkwig2016}.
Let $N^{\operatorname{alg}}_{\Delta_d^m(\alpha,\beta)}\left(p_{[n]},\mu_{[l]}\right)$ denote the number of those curves that additionally satisfy point conditions $p_{[n]}$ and non-tropical cross-ratios $\mu_{[l]}$ such that all conditions are in general position. Then
\begin{align*}
N_{\Delta_d^m(\alpha,\beta)}\left(p_{[n]},\lambda'_{[l]}\right)
=
N^{\operatorname{alg}}_{\Delta_d^m(\alpha,\beta)}\left(p_{[n]},\mu_{[l]}\right)
\end{align*}
holds, where $\lambda'_j$ is the tropical cross-ratio associated to $\mu_j$ for $j\in [l]$ in the sense of \cite{IlyaCRC}.
\end{theorem}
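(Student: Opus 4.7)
The statement is a correspondence-type result, so the natural route is non-Archimedean tropicalization combined with the realization of cross-ratios as pullbacks along the forgetful map to $\overline{M}_{0,4}$ on both the algebraic and tropical sides. The plan is: (i) set up the algebraic count over the Puiseux series field; (ii) show that tropicalization sends algebraic solutions to tropical solutions; (iii) show that each tropical solution lifts, with the number of lifts equal to its tropical multiplicity; (iv) conclude by summing.

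For (i), I would work over $K=\mathbb{C}\{\!\{t\}\!\}$ with its standard valuation $\val:K^{\ast}\to\mathbb{R}$. Fix lifts $\tilde{p}_i\in (K^{\ast})^m$ with $\val(\tilde{p}_i)=p_i$ and lifts $\tilde{\mu}_j\in K^{\ast}$ whose valuations realize the tropical cross-ratio lengths $|\lambda'_j|$ (with the sign dictated by the partition $(\beta_1\beta_2|\beta_3\beta_4)$). Because a classical cross-ratio $\mu_j$ is the pullback of the standard coordinate on $\overline{M}_{0,4}\cong\mathbb{P}^1$ under a forgetful map, the algebraic incidence locus on the Kontsevich moduli stack of rational stable maps of class $\Delta_d^m(\alpha,\beta)$ is the intersection of $\ev_i^{\ast}(\tilde{p}_i)$ with the forgetful pullbacks of the $\tilde{\mu}_j$. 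General position ensures this intersection is zero-dimensional and reduced, so $N^{\operatorname{alg}}$ counts it set-theoretically.

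For (ii), I would invoke functoriality: the algebraic forgetful morphism $\ft_{\mu_j}$ tropicalizes to the tropical $\ft_{\lambda'_j}$ of Definition \ref{def:ft_map}, so Berkovich tropicalization sends each algebraic solution to a parameterized tropical curve in $\mathcal{M}_{0,n}(\mathbb{R}^m,\Delta_d^m(\alpha,\beta))$ satisfying $\ev_i^{\ast}(p_i)$ and $\ft_{\lambda'_j}^{\ast}(|\lambda'_j|)$; general position forces each algebraic solution to lie over a single tropical combinatorial type. For (iii), I would apply Nishinou--Siebert and Tyomkin-style log/deformation theory: for each general-position tropical solution $C$, lifts are built by smoothing a degenerate log stable map with dual intersection complex prescribed by $C$, and a vertex-by-vertex computation identifies the number of lifts with the tropical multiplicity of $C$ in the intersection product
\begin{equation*}
\prod_{i=1}^n \ev_i^{\ast}(p_i)\cdot\prod_{j=1}^l \ft_{\mu_j}^{\ast}(|\lambda'_j|)\cdot \mathcal{M}_{0,n}(\mathbb{R}^m,\Delta_d^m(\alpha,\beta)),
\end{equation*}
which decomposes into local vertex contributions via the projection formula (Proposition \ref{prop:projection_formula}). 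Summing over tropical solutions then yields $N_{\Delta_d^m(\alpha,\beta)}(p_{[n]},\lambda'_{[l]})=N^{\operatorname{alg}}_{\Delta_d^m(\alpha,\beta)}(p_{[n]},\mu_{[l]})$.

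The hard part is the multiplicity bookkeeping in step (iii): one must verify that the cross-ratio divisor on $\overline{M}_{0,4}$ tropicalizes faithfully (with multiplicity one) to the corresponding divisor on $\mathcal{M}_{0,4}$, and that imposing this extra condition changes the vertex-local count of algebraic smoothings by exactly the tropical cross-ratio multiplicity. This reduces to a careful analysis of how the forgetful morphism interacts with tropicalization at the generic incidence point, and is essentially the technical heart of Tyomkin's argument; the remainder is standard non-Archimedean deformation theory for parameterized rational curves in the ambient toric variety.
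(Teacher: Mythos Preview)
The paper does not contain a proof of this theorem: it is stated as a citation of Tyomkin's result (Theorem~5.1 of \cite{IlyaCRC}) and is used as a black box throughout. There is therefore no proof in the paper to compare your proposal against.

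That said, your sketch is a reasonable high-level outline of the standard non-Archimedean approach to correspondence theorems of this type, and it is roughly in the spirit of Tyomkin's actual argument in \cite{IlyaCRC}: one works over a non-Archimedean field, uses compatibility of tropicalization with forgetful and evaluation morphisms, and then does a lifting count to match multiplicities. However, what you have written is a plan rather than a proof: the substantive content lies entirely in step~(iii), and you have essentially deferred it by saying it ``is the technical heart of Tyomkin's argument.'' In particular, you have not given any indication of how to verify that the number of lifts of a given tropical curve equals the tropical intersection-theoretic multiplicity appearing in Definition~\ref{def:numbers_of_interest}; this is a nontrivial computation, not a formality. If the intent is to reproduce Tyomkin's proof, you would need to actually carry out that multiplicity computation; if the intent is merely to invoke the result, then the correct move is simply to cite \cite{IlyaCRC}, as the paper does.
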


Given a tropical stable map $C$ that satisfies a tropical cross-ratio condition $\lambda'$, we can think of this condition as a path of fixed length inside this stable map. Thus a degenerated tropical cross-ratio condition $\lambda$ can be thought of as a path of length zero inside a tropical tropical stable map, i.e. there is a vertex of valence $>3$ in a stable map satisfying a degenerated tropical cross-ratio. Or in other words, there is a vertex $v\in C$ such that the image of $v$ under $\ft_{\lambda}$ is $4$-valent. We say that $\lambda$ is \textit{satisfied at $v$}. It is obvious that \textit{a tropical stable map $C$ satisfies a degenerated tropical cross-ratio condition if and only if there is a vertex of $C$ that satisfies the degenerated tropical cross-ratio}. We define the set $\lambda_v$ of tropical cross-ratios associated to a vertex $v$ that consists of all given tropical cross-ratios whose images of $v$ using the forgetful map are $4$-valent.

\begin{remark}\label{remark:path_criterion}
An equivalent and more descriptive way of saying that a tropical cross-ratio is satisfied at a vertex is the \textit{path criterion}:
Let $C$ be a rational tropical tropical stable map and let $\lambda=\lbrace \beta_1,\dots,\beta_4\rbrace$ be a tropical cross-ratio, then a pair $\left( \beta_i,\beta_j\right)$ induces a unique path in $C$. If the paths associated to $\left( \beta_{i_1},\beta_{i_2} \right)$ and $\left( \beta_{i_3},\beta_{i_4} \right)$ intersect in exactly one vertex $v$ of $C$ for all pairwise different choices of $i_1,\dots,i_4$ such that $\lbrace i_1,\dots,i_4 \rbrace =\lbrace 1,\dots,4 \rbrace$, then and only then the tropical cross-ratio $\lambda$ is satisfied at $v$. Note that ``for all choices" above is equivalent to ``for one choice".
\end{remark}

Let $v$ be a vertex of a rational abstract tropical curve underlying a rational tropical stable map as before. If $$\val(v)=3+\#\lambda_v$$ holds, then we say that $v$ is \textit{resolved according to $\lambda'_i$} (notation from Definition \ref{def:trop_CR} is used) if we replace $v$ by two vertices $v_1,v_2$ that are connected by a new edge such that
\begin{align*}
\lambda_v=\lbrace\lambda_i\rbrace\cup\lambda_{v_1}\cup\lambda_{v_2}
\end{align*}
is a union of pairwise disjoint sets and
\begin{align*}
\val(v_k)=3+\#\lambda_{v_k}
\end{align*}
holds for $k=1,2$.

Resolutions of vertices come into play when we want to determine the weight of a tropical stable map that contributes to $N_{\Delta_d^m(\alpha,\beta)}\left(p_{[n]},L_{\underline{\kappa}^\alpha},L_{\underline{\kappa}^\beta},P_{\underline{\eta}^\alpha},P_{\underline{\eta}^\beta},\lambda_{[l]} \right)$.

\begin{definition}[Cross-ratio multiplicity]\label{def:CR_mult}
We use notation from Definition \ref{def:trop_CR}. Let $v$ be a vertex of a rational abstract tropical curve underlying a tropical stable map with
\begin{align*}
\lambda_v=\lbrace \lambda_{i_1},\dots,\lambda_{i_r}\rbrace \quad \textrm{and} \quad \val(v)=3+r.
\end{align*}
Let $|\lambda'_{i_1}|>\dots >|\lambda'_{i_r}|$ be a total order. A \textit{total resolution} of $v$ is a $3$-valent labeled rational abstract tropical curve on $r$ vertices that arises from $v$ by resolving $v$ according to the following recursion. First, resolve $v$ according to $\lambda'_{i_1}$. The two new vertices are denoted by $v_1,v_2$. Choose $v_j$ with $\lambda_{i_2}\in\lambda_{v_j}$ and resolve it according to $\lambda_{i_2}'$ (this may not be unique, pick one resolution). Now we have $3$ vertices $v_1,v_2,v_3$ from which we pick the one with $\lambda_{i_3}\in\lambda_{v_j}$, resolve it and so on. We define the \textit{cross-ratio multiplicity} $\mult_{\CR}(v)$ of $v$ to be the number of total resolution of $v$.
\end{definition}

\begin{remark}
The cycle $X$ from Definition \ref{def:general_pos} was computed in \cite{CR1}, where it turned out that the weights of the top-dimensional cells of $X$ are precisely the cross-ratio multiplicities from Definition \ref{def:CR_mult}. In particular, $\mult_{\CR}(v)$ does not depend on the total order $|\lambda'_{i_1}|>\dots >|\lambda'_{i_r}|$.
\end{remark}

\begin{example}
Let $v$ be a $6$-valent vertex such that $\lambda_v=\lbrace \lambda_1,\lambda_2,\lambda_3\rbrace$ and the degenerated tropical cross-ratios are given by $\lambda'_1:=(12|56),\lambda'_2:=(34|56),\lambda'_3=(12|34)$. The following two $3$-valent trees are all the total resolutions of $v$ with respect to $|\lambda'_1|>|\lambda'_2|>|\lambda'_3|$.
\begin{figure}[H]
\centering
\def\svgwidth{300pt}
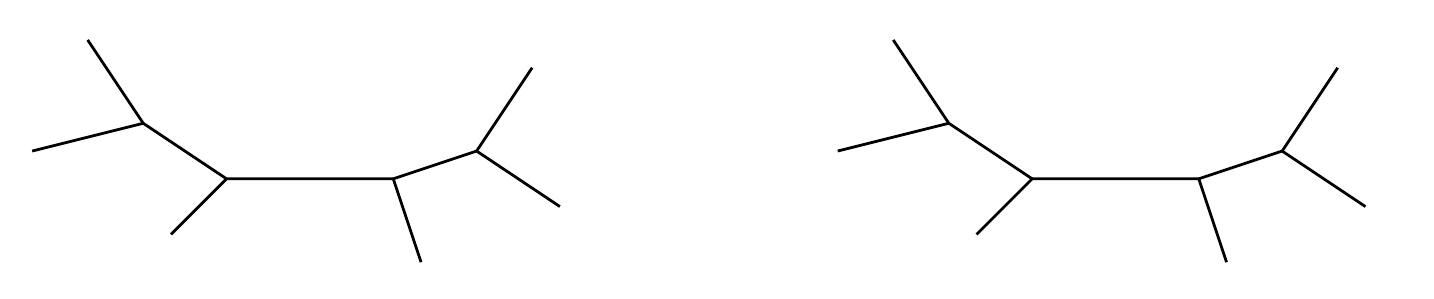
\label{Figure10}
\end{figure}
\end{example}

\begin{definition}(ev-matrix)\label{def:ev_matrix}
The contribution of the evaluation maps $\ev_i$ and $\partial\ev_k$ to the intersection theoretic multiplicity of a tropical cycle in $\mathcal{M}_{0,n}\left(\mathbb{R}^m,\Delta_d^m(\alpha,\beta)\right)$ can be calculated (see Lemma 1.2.9 of \cite{Rau}) by the absolute value of the index of the \textit{ev-matrix} which is given by the (locally around $C$) linear maps $\ev_i$ and $\partial\ev_j$ (for all possible $i,j$), where the coordinates on $X$ (see Definition \ref{def:general_pos}) are the bounded edges' lengths. In the special case that the cycle is $0$-dimensional, i.e. if $C$ contributes to the number $N_{\Delta_d^m(\alpha,\beta)}\left(p_{[n]},L_{\underline{\kappa}^\alpha},L_{\underline{\kappa}^\beta},P_{\underline{\eta}^\alpha},P_{\underline{\eta}^\beta},\lambda_{[l]} \right)$, then the contribution of the evaluation maps $\ev_i$ and $\partial\ev_k$ to the intersection theoretic multiplicity of $C$ can be calculated via the absolute value of the determinant of the $\ev$-matrix of $C$.
If $M(C)$ is the $\ev$-matrix of $C$ as above, then we define
\begin{align*}
\mult_{\ev}(C):=|\det\left(M(C)\right)|
\end{align*}
and refer to $\mult_{\ev}(C)$ as $\ev$-multiplicity of $C$.
\end{definition}

\begin{example}\label{ex:ev-mult}
Consider the tropical stable maps $C$ whose image in $\mathbb{R}^3$ is shown in Figure \ref{Figure28}.
The ends of $C$ are labeled by $1,\dots,6$. The labels are indicated with circled numbers in Figure \ref{Figure28}. The direction vectors of edges and ends of $C$ are shown in Figure \ref{Figure28}. Moreover, the lengths of the three bounded edges of $C$ are denoted by $l_1,l_2,l_3$. The end labeled with $1$ which is drawn dotted indicates a contracted end. The degree of $C$ is $\Delta_{1}^3\left( \alpha, \beta\right)$, where $\alpha=(0,1,0,\dots)$ and $\beta=(1,0,\dots)$ (see Notation \ref{notation:standard_directions_and_alpha_beta_degrees}), i.e. $C$ has one end of primitive direction $-e_3$ whose weight is $2$ and $C$ has one end of primitive direction $e_3$ whose weight is $1$.

The tropical stable map $C$ satisfies the following conditions by which it is fixed:
$p_1$ is a point condition to which the end labeled with $1$ is contracted. The end labeled with $3$ satisfies a codimension two tangency condition $L_3$, where $L_3$ is a multi line with ends of weight $1$ which is indicated by a dashed line in Figure \ref{Figure28}. Moreover, the end labeled with $6$ satisfies a codimension one tangency condition $P_6$. Notice that Notation \ref{notation:convention_conditions_related_to_ends} was used.
\begin{figure}[]
\centering
\def\svgwidth{300pt}
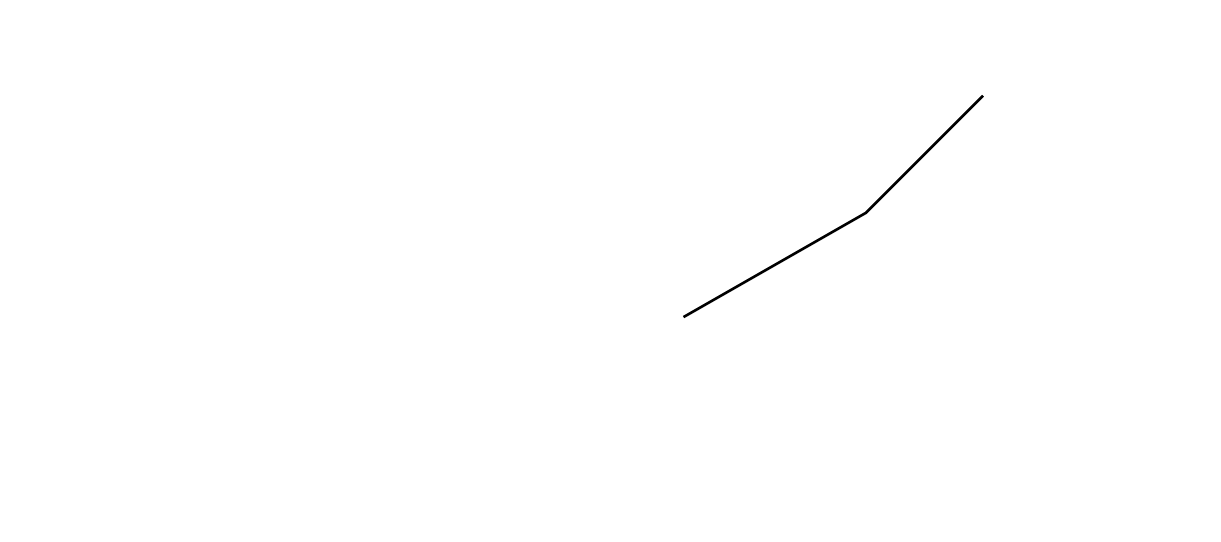
\caption{The tropical stable map $C$ that is fixed by a point $p_1$ and two tangency conditions $L_3,P_6$. The arrows and vectors indicate the directions of the edges.}
\label{Figure28}
\end{figure}

Then the ev-matrix $M(C)$ with respect to the base point $p_1$ of $C$ reads as
\begin{align*}
M(C)=
\begin{array}{c c cccccc c}
 && \multicolumn{3}{c}{\footnotesize \textrm{Base $p_1$}} & \footnotesize \textrm{$l_1$} & \footnotesize \textrm{$l_2$} & \footnotesize \textrm{$l_3$} &\\
\footnotesize \textrm{$\ev_1$} &\ldelim({6}{0.5em}& 1 & 0 & 0 & 0 & 0 & 0 & \rdelim){6}{0.5em} \\
&& 0 & 1 & 0 & 0 & 0 & 0 &\\
&& 0 & 0 & 1 & 0 & 0 & 0 &\\
\footnotesize \textrm{$\partial\ev_3$} && 1 & 0 & 0 & 1 & 0 & 0 & \\
\footnotesize \textrm{$\partial\ev_6$}&& 1 & 0 & 0 & 1 & 1 & 1 &\\
&& 0 & 1 & 0 & 0 & 0 & 1 &\\
\end{array}.
\end{align*}
The first $3$ rows describe the position of $p_1$. The fourth row describes the position of $L_3$ and the last two rows describe the position of $P_6$ using the coordinates $l_1,l_2,l_3$.
\end{example}

\begin{remark}
In case of tropical curves in $\mathbb{R}^2$, the $\ev$-multiplicity splits into a product of local vertex multiplicities. This property of the $\ev$-multiplicity does not hold for tropical space curves, see Example \ref{ex:ev-mult}.
\end{remark}

\begin{proposition}\label{prop:zsfssg_int_theory}
We use Notation \ref{notation:underlined_symbols}. Counting rational tropical stable maps satisfying degenerated tropical cross-ratios yields the same numbers as counting rational tropical stable maps satisfying non-degenerated ones, i.e.
\begin{align*}
N_{\Delta_d^m(\alpha,\beta)}\left(p_{[n]},L_{\underline{\kappa}^\alpha},L_{\underline{\kappa}^\beta},P_{\underline{\eta}^\alpha},P_{\underline{\eta}^\beta},\lambda'_{[l]} \right)
=
N_{\Delta_d^m(\alpha,\beta)}\left(p_{[n]},L_{\underline{\kappa}^\alpha},L_{\underline{\kappa}^\beta},P_{\underline{\eta}^\alpha},P_{\underline{\eta}^\beta},\lambda_{[l]} \right).
\end{align*}
If $C$ is a tropical stable map that contributes to $N_{\Delta_d^m(\alpha,\beta)}\left(p_{[n]},L_{\underline{\kappa}^\alpha},L_{\underline{\kappa}^\beta},P_{\underline{\eta}^\alpha},P_{\underline{\eta}^\beta},\lambda_{[l]} \right)$, then the multiplicity $\mult(C)$ with which $C$ contributes to this intersection product is given by
\begin{align*}
\mult(C)=\mult_{\ev}(C)\prod_{v\mid v \textrm{ vertex of }C}\mult_{\CR}(v).
\end{align*}
\end{proposition}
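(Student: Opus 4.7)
The plan is to split the statement into its two claims and handle them in sequence. For the equality of curve counts, the key observation is that a non-degenerated cross-ratio $\mu'_{j'}$ is encoded as the pull-back $\ft^*_{\mu_{j'}}(|\mu'_{j'}|)$ where $|\mu'_{j'}|$ is a point of $\mathcal{M}_{0,4}$, while the corresponding degenerated cross-ratio is encoded as $\ft^*_{\lambda_{j'}}(0)$. Under the identification of the ray of $\mathcal{M}_{0,4}$ with $\mathbb{R}_{\ge 0}$ via $\tilde{\pi}$ from Remark \ref{remark:enumerative_meaning_intersection_product}, the cycles $\{|\mu'_{j'}|\}$ and $\{0\}$ differ by a translation and are rationally equivalent by Remark \ref{remark:facts_about_rational_equivalence}(a). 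Pulling back preserves rational equivalence by (b), and products of rationally equivalent cycles are rationally equivalent. Iterating this for each $\mu'_{j'}$ shows that the two intersection products are rationally equivalent as $0$-dimensional cycles, so by (c) their degrees coincide, which is the asserted equality.

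For the multiplicity formula, I would compute the intersection product in two stages. First, consider the cycle
\begin{align*}
X = \prod_{\tilde j=1}^{\tilde l} \ft^*_{\lambda_{\tilde j}}(0) \cdot \mathcal{M}_{0,n}\left(\mathbb{R}^m,\Delta_d^m(\alpha,\beta)\right),
\end{align*}
already singled out in Definition \ref{def:general_pos}. By the remark following Definition \ref{def:CR_mult} (recalling the explicit computation from \cite{CR1}), the top-dimensional cells of $X$ corresponding to a combinatorial type of stable map carry weight $\prod_{v}\mult_{\CR}(v)$, where the product runs over the vertices of that type to which at least one degenerated cross-ratio is attached. Second, intersect $X$ with the remaining pull-backs $\ev_i^*(p_i)$, $\partial\ev_k^*(L_k)$ and $\partial\ev_f^*(P_f)$. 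Because these all factor through maps whose targets can be treated as $\mathbb{R}^t$ (Remark \ref{remark:enumerative_meaning_intersection_product}), Lemma 1.2.9 of \cite{Rau}, quoted in Definition \ref{def:ev_matrix}, says the contribution to the weight of an isolated intersection point $C$ is the absolute value of the determinant of the local ev-matrix $M(C)$, i.e.\ $\mult_{\ev}(C)$.

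Combining the two stages via the bilinearity of the affine intersection product yields
\begin{align*}
\mult(C) = \mult_{\ev}(C)\prod_{v\text{ vertex of }C}\mult_{\CR}(v),
\end{align*}
as claimed. The main obstacle I expect is the bookkeeping at vertices of $C$ that carry several degenerated cross-ratios simultaneously: one has to make sure that the local vertex weight on $X$ is genuinely $\mult_{\CR}(v)$ and does not depend on the order in which the cross-ratios are imposed. This is exactly what the total resolution recursion in Definition \ref{def:CR_mult}, together with the order-independence noted in the remark following it, provides. Once this localization is in place, the factorization of $\mult(C)$ is immediate from the stepwise intersection above, and the second claim follows. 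Finally, combining the first claim with this formula shows that the multiplicity description is valid for both degenerated and non-degenerated cross-ratio counts.
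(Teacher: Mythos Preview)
Your proposal is correct and essentially reconstructs the argument the paper defers to \cite{CR1}: the paper's own proof is a one-line remark that the results of \cite{CR1} (rational equivalence of the cross-ratio points for the first claim, the computation of the weights of $X$ together with the $\ev$-matrix contribution for the second) carry over verbatim since none of the arguments there depend on the target being $\mathbb{R}^2$. Your two-stage decomposition---first identifying the weights of $X$ as $\prod_v \mult_{\CR}(v)$ via the remark after Definition~\ref{def:CR_mult}, then picking up the factor $\mult_{\ev}(C)$ from the evaluation pull-backs via Definition~\ref{def:ev_matrix}---is exactly this argument spelled out.
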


\begin{proof}
This follows immediately from \cite{CR1} since the arguments used there do not depend on our tropical curves lying in $\mathbb{R}^2$.
\end{proof}

The following corollary is a consequence of Proposition \ref{prop:zsfssg_int_theory} and was already proved in \cite{CR1}. It is a crucial observation and enables us to state that our tropical curves are floor decomposed later on, which in turn allows us to work with floor diagrams.

\begin{corollary}\label{cor:CR_pfade_ueber_alle_edges_an_vertex}
Let $C$ be a rational tropical stable map such that it contributes to the number $N_{\Delta_d^m(\alpha,\beta)}\left(p_{[n]},L_{\underline{\kappa}^\alpha},L_{\underline{\kappa}^\beta},P_{\underline{\eta}^\alpha},P_{\underline{\eta}^\beta},\lambda_{[l]} \right)$. Let $v\in C$ be a vertex of $C$ such that $\val(v)>3$. Then for every edge $e$ adjacent to $v$ in $C$ there is a $\beta_i$ in some $\lambda_j\in\lambda_v$ such that $e$ is in the shortest path from $v$ to $\beta_i$. 
\end{corollary}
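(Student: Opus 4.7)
My plan is to argue by contradiction via a splitting construction at $v$. Suppose that some edge $e$ adjacent to $v$ has the property that no $\beta_i$ appearing in any $\lambda_j\in\lambda_v$ is reached from $v$ through $e$. I will exhibit a combinatorial type $\tau'$ of rational tropical stable maps lying in the cycle $X$ of Definition \ref{def:general_pos}, strictly containing the combinatorial type $\tau$ of $C$ in its boundary with $\dim(\tau')=\dim(\tau)+1$. Since general position forces $C$ to lie in a top-dimensional cell of $X$, this yields the desired contradiction.

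The splitting works as follows. Since $\val(v)>3$, pick any other edge $e'$ of $v$. Replace $v$ by two vertices $v',v''$: let $v''$ carry $e,e'$ and a new bounded edge $e_{new}$, and let $v'$ carry all remaining edges at $v$ together with $e_{new}$. Setting the direction vectors $v(e,v'')=v(e,v)$, $v(e',v'')=v(e',v)$, and $v(e_{new},v'')=-v(e,v'')-v(e',v'')$, the balancing of $C$ at $v$ forces $v(e_{new},v')=v(e,v)+v(e',v)$, so balancing holds at both new vertices, each of which is at least $3$-valent. Setting $h'(v')=h(v)$ extends uniquely by affine linearity to a map associated with $\tau'$, and letting $\ell(e_{new})\to 0$ collapses $v''$ onto $v'$ to recover $C$; thus $\tau$ lies in the boundary of $\tau'$, and $\tau'$ has exactly one more bounded edge than $\tau$.

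Next, the cross-ratio conditions transfer from $\tau$ to $\tau'$. Each $\lambda_j\in\lambda_v$ is still satisfied, now at $v'$ instead of $v$. By hypothesis the subtree of $C$ attached to $v$ through $e$ contains none of the ends $\beta_1,\dots,\beta_4$ of $\lambda_j$, so the edge $e$ at $v''$ is pruned by $\ft_{\lambda_j}$. Whether or not $e'$ carries a $\lambda_j$-end, the image of $v''$ has at most two adjacent edges, so $v''$ is straightened out or absorbed into $v'$ under $\ft_{\lambda_j}$; the resulting image at $v'$ is incident to the same four edges toward $\beta_1,\dots,\beta_4$ as the image at $v$ had in $\tau$, so $\ft_{\lambda_j}(v')$ is $4$-valent. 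Cross-ratios located at other vertices of $C$ are unaffected by this local modification at $v$. Hence $\tau'\subset X$, contradicting the top-dimensionality of $\tau$ in $X$ and finishing the proof.

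The delicate step is the cross-ratio transfer in the previous paragraph: one must check that the $4$-valent image vertex of $\ft_{\lambda_j}$ persists under the split. This hinges on the hypothesis that $e$ carries no cross-ratio end from $\lambda_v$, which makes $e$ invisible to every forgetful map $\ft_{\lambda_j}$ with $\lambda_j\in\lambda_v$; once this is established, the dimension comparison between $\tau$ and $\tau'$ in $X$ is immediate, and the choice of the auxiliary edge $e'$ plays no further role.
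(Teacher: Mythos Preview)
Your construction of $\tau'$ and the verification that every degenerated cross-ratio $\lambda_j$ (whether $\lambda_j\in\lambda_v$ or not) is still satisfied in $\tau'$ are correct and carefully argued. However, the final inference has a genuine gap. You claim that $\tau'$ ``lies in the cycle $X$'' and that this contradicts $\tau$ being top-dimensional. What you have actually shown is that $\tau'$ lies in the set-theoretic locus $\bigcap_j \ft_{\lambda_j}^{-1}(0)$; but $X$ is the \emph{cycle} $\prod_j\ft_{\lambda_j}^*(0)\cdot\mathcal{M}$, which has pure dimension $\dim(\mathcal{M})-l$. Since $\dim(\tau')=\dim(\mathcal{M})-l+1$, the cell $\tau'$ cannot lie in $|X|$ at all, so no contradiction arises from its mere existence. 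What you would need is that the \emph{weight} of $\tau$ in $X$ vanishes whenever such an excess-dimensional $\tau'$ exists in the preimage, and this is not automatic in tropical intersection theory: adjacent top-cells of an intermediate cycle other than $\tau'$ may still contribute to $\omega_X(\tau)$. Remark~\ref{remark:enumerative_meaning_intersection_product} does not help here, as it concerns the final $0$-dimensional product, not $X$.

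The paper's argument is different and avoids this issue entirely. It invokes Proposition~\ref{prop:zsfssg_int_theory}, which gives $\mult(C)=\mult_{\ev}(C)\cdot\prod_v\mult_{\CR}(v)$, where $\mult_{\CR}(v)$ is the number of total resolutions of $v$ (Definition~\ref{def:CR_mult}). One then observes that in any step of a total resolution a newly created $3$-valent vertex has its two non-new edges each carrying an entry of the cross-ratio just resolved (because resolving forces a $2$--$2$ split of that cross-ratio's four entries). Hence $e$, which by hypothesis carries no entry of any $\lambda_j\in\lambda_v$, can never sit at a $3$-valent vertex; so no total resolution exists, $\mult_{\CR}(v)=0$, and $C$ does not contribute. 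Your splitting idea is morally the first step of this resolution analysis, but to close the argument you ultimately need the weight computation of $X$ from \cite{CR1} (equivalently, Proposition~\ref{prop:zsfssg_int_theory}), which is precisely the tool the paper uses directly.
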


\begin{proof}
Assume that there is a vertex $v$ of $C$ and an edge $e$ of $v$ such that $e$ does not appear in some shortest path to some $\beta_i$ in some $\lambda_j\in\lambda_v$. Then a total resolution of $v$ cannot have $3$-valent vertices only since each $3$-valent vertex arising from resolving a cross-ratio cannot be adjacent to $e$. This is (by Proposition \ref{prop:zsfssg_int_theory}) a contradiction to $C$ contributing to $N_{\Delta_d^m(\alpha,\beta)}\left(p_{[n]},L_{\underline{\kappa}^\alpha},L_{\underline{\kappa}^\beta},P_{\underline{\eta}^\alpha},P_{\underline{\eta}^\beta},\lambda_{[l]} \right)$.
\end{proof}

\section{Condition flows}

In the following \textit{condition flows} on a tropical curve are defined. The motivation is the following: In case of a tropical curve $C$ in $\mathbb{R}^2$ and some set $S$ of general positioned conditions, the following implication holds (see for example \cite{KontsevichPaper}): If $C$ satisfies all given conditions $S$ and $C$ has a \textit{string}, then $C$ is not fixed by the given conditions.
Now that we are in higher dimension (i.e. let $C$ be a tropical curve in $\mathbb{R}^m$), we aim for a generalization, namely the implication: If $C$ satisfies all given conditions $S$ and there is no \textit{conditions flow} of type $m$ on $C$, then $C$ is not fixed by the given conditions. A similar construction has been used in \cite{MandelRuddat2019} to study multiplicities of tropical curves.

\begin{definition}[Leaky]
A graph $G$ together with a function $leak:V(G)\to \mathbb{N}$ that assigns a natural number to each vertex of the graph is called \textit{leaky}.
\end{definition}

\begin{definition}[Flow]\label{def:flow}
Let $G$ be a tree, where we allow \textit{ends}, i.e. edges that are adjacent to a single vertex only without forming a loop.
Each edge $e$ that is adjacent to two vertices consists of two half-edges $e_1,e_2$. If $v$ is a vertex of $G$ that is adjacent to $e$, then we refer to the half-edge $e_i$ ($i=1,2$) of $e$ that is adjacent to $v$ as \textit{outgoing edge of $v$} and to the other half-edge of $e$ as \textit{incoming edge of $v$}. If $e$ is an edge that is adjacent to a single vertex $v$, then $e$ is considered to be an \textit{incoming edge of $v$}. A \textit{flow structure} on $G$ is given by a map $R$ that assigns to each half-edge and to each end of $G$ an element of $\mathbb{N}$. We refer to the image $R(e_i)$ of an end or a half-edge $e_i$ under the flow structure map $R$ as \textit{flow on $e_i$}. Moreover, the \textit{flow of a vertex $v$} is defined by
\begin{align*}
\flow(v):=\sum_{\textrm{$e$ incoming edge of $v$}} R(e).
\end{align*}
\end{definition}

\begin{figure}[H]
\centering
\def\svgwidth{230pt}
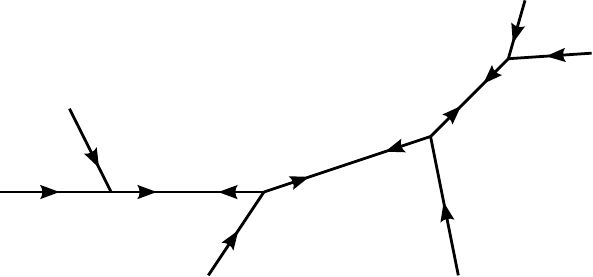
\caption{An example of a flow structure, where each half edge and if it is considered incoming or outgoing is indicated by an arrow (the numbers on the arrows are the numbers associated to the half edges by the flow structure). The vertices are denoted by $v_i$ for $i\in [4]$ and $\flow(v_i)=3$ for $i\in [4]$.}
\label{Figure29}
\end{figure}

\begin{definition}[Condition flow]\label{def:condition_flow}
Let $G$ be a leaky graph with a flow as defined in \ref{def:flow}. The flow structure on $G$ is called \textit{condition flow of type $m$} if it satisfies the following properties:
\begin{enumerate}[label=(P\arabic*),ref=(P\arabic*)]
\item \label{P1}
If $e$ is an edge of $G$ consisting of the two half-edges $e_1,e_2$, then
\begin{align*}
R(e_1)+R(e_2)=m-1.
\end{align*} 
\item \label{P2}
The flow is \textit{balanced} on each vertex, that is
\begin{align*}
\flow(v)-\leak(v)=0
\end{align*}
holds for all vertices $v$ of $G$.
\end{enumerate}
\end{definition}

Figure \ref{Figure29} provides an example of a condition flow of type $3$, where $\leak(v_i)=3$ for $i\in [4]$.

\begin{remark}\label{remark:condition_flows_generalize_ad_hoc_methods}
A condition flow of type $2$ is a flow structure on a graph $G$ such that for each edge consisting of two half-edges there is exactly one half-edge $e_1$ with $R(e_1)=1$ and another half-edge $e_2$ with $R(e_2)=0$.
There are different ways of encoding this condition flow of type $2$ into a graph $G$.
In \cite{BroccoliCurves} orientations on $G$ where used to indicate half-edges $e_1$ with $R(e_1)=1$, and in \cite{RanganathanCavalieriMarkwigJohnson} and \cite{CR1} ``thick'' half-edges were used.
One advantage of condition flows over these ad hoc constructions is that they are applicable in higher dimensions.
\end{remark}

\begin{lemma}\label{lemma:condition_flow_unique}
A condition flow of type $m$ on a tree is uniquely determined by its leak function and the flow on its ends.
\end{lemma}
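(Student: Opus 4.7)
The plan is to prove this by induction on the number of vertices of $G$. For the base case, if $G$ has a single vertex then all edges of $G$ are ends, so the given flow values on the ends already specify $R$ on every half-edge, and there is nothing to check.

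For the inductive step, assume $G$ has at least two vertices. Restricting attention to the subgraph of bounded edges yields a tree with $\geq 2$ vertices, hence a leaf; pick a vertex $v$ of $G$ incident to exactly one bounded edge $e$ (and possibly several ends). Write $w$ for the other endpoint of $e$, and $e_v, e_w$ for the two half-edges of $e$ at $v$ and $w$ respectively. By Definition \ref{def:flow}, the incoming half-edges at $v$ are precisely the ends attached to $v$ together with $e_w$, so the balance condition \ref{P2} at $v$ reads
\begin{align*}
R(e_w) + \sum_{\substack{f \text{ end of } G \\ f \text{ attached to } v}} R(f) = \leak(v).
\end{align*}
Since the sum on the left and $\leak(v)$ are part of the given data, $R(e_w)$ is uniquely determined; property \ref{P1} then forces $R(e_v) = (m-1) - R(e_w)$.

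Next I would reduce to a smaller tree $G'$ obtained by deleting $v$ together with $e$ and all ends attached to $v$, and attaching at $w$ a single new end $\tilde e$ with prescribed flow $R(\tilde e) := R(e_v)$. The point is that $G'$ still carries a condition flow of type $m$ in which $\leak$ is restricted to $V(G')$ and the end-flows consist of the original ends not attached to $v$ together with the newly prescribed $R(\tilde e)$. Property \ref{P1} is unaffected on the remaining bounded edges, and the balance \ref{P2} at $w$ is preserved because the contribution of $e$ to $\flow(w)$ in $G$ was $R(e_v)$, which equals the new contribution $R(\tilde e)$ in $G'$; no other vertex has its flow changed. The induction hypothesis applied to $G'$ then uniquely determines $R$ on every half-edge of $G'$, and combined with the forced values $R(e_v), R(e_w)$ already computed, this gives uniqueness on $G$.

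The only points requiring verification are the existence of a leaf vertex $v$ when $G$ has more than one vertex, which is immediate from the standard fact that a finite tree with $\geq 2$ vertices has a leaf, and the claim that the reduction preserves a condition flow at $w$, which is essentially the same calculation used to solve for the unknown half-edges in the first place. I do not anticipate either step to be a real obstacle; the argument is simply a leaves-to-root propagation, using \ref{P2} at each leaf to determine the single unknown incoming flow and \ref{P1} to transport that value across the adjacent bounded edge.
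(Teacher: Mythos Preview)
Your proof is correct. Both your argument and the paper's hinge on the same key observation: at a vertex adjacent to only one bounded edge, condition \ref{P2} together with the known end-flows forces the value of $R$ on the incoming half-edge of that edge, and then \ref{P1} forces the other half-edge. You package this as an induction, peeling off a leaf vertex and reducing to a smaller tree; the paper instead argues by contradiction, assuming two condition flows differ on some half-edge and propagating the disagreement along bounded edges (via \ref{P1} and \ref{P2}) until reaching a leaf vertex, where \ref{P2} yields a contradiction. Your constructive version has the minor advantage of showing explicitly how the flow is recovered from the data, at the cost of setting up the reduced tree $G'$; the paper's version is shorter but less explicit. The underlying content is the same.
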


\begin{proof}
Assume there are two condition flows of type $m$ with the same leak function on a tree $G$. First, note that the leak function determines the flows on the vertices by \ref{P2}. Assume there is at least one half-edge $e_1$ on which the flows differ. Since we assumed that the flows on the ends are equal, there is another half-edge $e_2$ adjacent to $e_1$. Thus the flows also differ on $e_2$ because of \ref{P1}. So there is an edge $e$ of $G$ on which the flows differ. Denote a vertex to which $e$ is adjacent by $v$. If $v$ is only adjacent to one bounded edge, namely $e$, then \ref{P2} yields a contradiction. Hence there is another edge $e'\neq e$ adjacent to $v$ on which the flows differ because of \ref{P2}. Since $G$ is a tree, there is a vertex $v'$ of $G$ which is only adjacent to one bounded edge such that the flows on this edge differ, which leads to the same contradiction as above.
\end{proof}

\begin{construction}\label{constr:condition_flow_to_trop_curve}
Let $G$ be a tree with fixed flows on its ends. We construct a flow structure on $G$ the following way.
Note that we can think of each bounded edge of $G$ as being glued from two half-edges (by cutting it into two halves). Set all flows on all half-edges that are no ends to be zero.
We use the following procedure to spread the flows of the ends to all half-edges:
Choose a vertex $v$ of $G$. Now spread the flows on $G$ according to the following rule.
If a vertex $v$ has $r$ outgoing edges $e_{\out,1},\dots,e_{\out,r}$ and $\tilde{r}$ incoming edges $e_{\inc,1},\dots,e_{\inc,\tilde{r}}$ such that $e_{\inc,i}$ and $e_{\out,i}$ form an edge for $i=1,\dots,r$, then
\begin{align}\label{eq:flow_spread}
R(e_{\out,i})=
\begin{cases}
\flow(v)-R(e_{\inc,i})-1 &\textrm{, if $\flow(v)-R(e_{\inc,i})>0$} \\
0 &\textrm{, if $\flow(v)-R(e_{\inc,i})=0$ and $\flow(v)\neq 0$}\\
\end{cases}
\end{align}
for $i=1,\dots,r$.  
  
Repeat with another vertex $v'$ of $G$. Notice that flows on half-edges can at most increase. Stop when the flows on all edges stay the same. This construction yields a unique flow on $G$.
\end{construction}

\begin{example}\label{ex:constructing_flow_on_tree}
We want to illustrate Construction \ref{constr:condition_flow_to_trop_curve}. Figure \ref{Figure30} provides an example of a tree $G$ on the four vertices $v_i$ for $i\in [4]$. The flows on ends of $G$ are indicated in Figure \ref{Figure30}. Before starting with the procedure, all flows of non-end half-edges are set to be zero as in Figure \ref{Figure30}.

\begin{figure}[H]
\centering
\def\svgwidth{200pt}
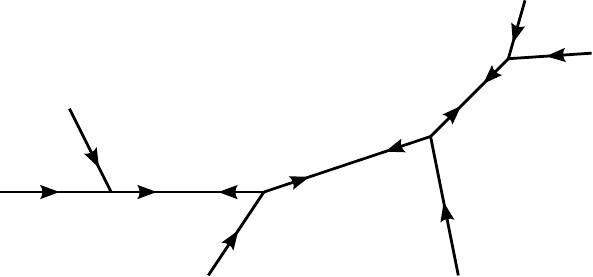
\caption{The graph $G$ to which Construction \ref{constr:condition_flow_to_trop_curve} should be applied.}
\label{Figure30}
\end{figure}

\noindent See Figure \ref{Figure31} for the following: In the first step, Construction \ref{constr:condition_flow_to_trop_curve} is applied to determine the flow on the outgoing half-edges of $v_1$. After that, Construction \ref{constr:condition_flow_to_trop_curve} is applied to determine the flows on the outgoing half-edges of $v_2$. If Construction \ref{constr:condition_flow_to_trop_curve} is then applied to $v_4$ and after that to $v_3$, then the procedure terminates. The fourth step in Figure \ref{Figure31} shows the resulting flow structure on $G$ which is the same as the one in Figure \ref{Figure29}.

\begin{figure}[H]
\centering
\def\svgwidth{380pt}
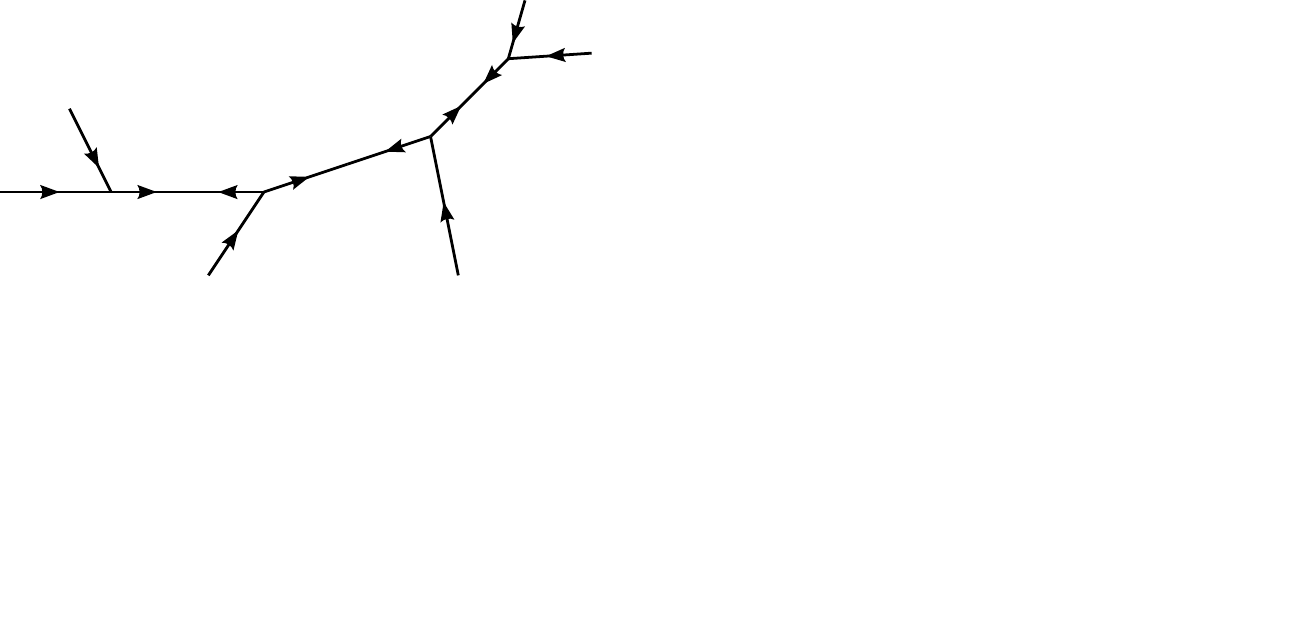
\caption{The progress of Construction \ref{constr:condition_flow_to_trop_curve} applied to $G$ and the condition flow of type $3$ it yields on $G$.}
\label{Figure31}
\end{figure}
\end{example}

\begin{proof}[Proof that Construction \ref{constr:condition_flow_to_trop_curve} terminates uniquely.]
We use induction on the number $N$ of vertices of $G$. If $N=1$, then there is nothing to show. So let $N=2$. Then the procedure of Construction \ref{constr:condition_flow_to_trop_curve} stops uniquely after at most 2 steps. For the induction step notice that $G$ is a tree, i.e. there is a vertex $v$ that is adjacent to exactly one edge $e$ that is no end. The flows on the ends of $G$ are given and
\begin{align*}
R(e_{\out,v})=\begin{cases}
0 & \textrm{if all ends adjacent to $v$ are of flow zero,}\\
\flow(v)-1 & \text{else}
\end{cases}
\end{align*}
is already determined since the maximum of flows assigned to the same half-edge is taken in each step. Let $v'$ be the vertex adjacent to $v$ via $e$. Consider the tree $G'$ that arises from $G$ the following way: forget $e,v$ and all ends adjacent to $v$, then attach a new end $e'$ to $v'$ and assign the flow $0$ (resp. $\flow(v)-1$) to $e'$. Now run the procedure of Contruction \ref{constr:condition_flow_to_trop_curve} on $G'$. By induction, this procedure terminates uniquely. Notice that the missing flow on $e$ associated to the outgoing half-edge $e_{\out,v'}$ of $v'$ is determined by \eqref{eq:flow_spread}. Moreover, the flow on $e_{\out,v'}$ does not affect the other flows on $G$ which the procedure generated on $G'$. Thus Construction \ref{constr:condition_flow_to_trop_curve} terminates uniquely.
\end{proof}

\begin{definition}[Induced flows]\label{def:induced_flows}
Consider a tropical stable map $C$ that contributes to the number $N_{\Delta_d^m(\alpha,\beta)}\left(p_{[n]},L_{\underline{\kappa}^\alpha},L_{\underline{\kappa}^\beta},P_{\underline{\eta}^\alpha},P_{\underline{\eta}^\beta},\lambda_{[l]} \right)$. Associate flows $R(e)$ to ends $e$ of $C$ the following way:
If $e$ is a non-contracted end of $C$ satisfying a codimension two tangency condition $L_{k}$ for some $k\in\underline{\kappa}^\gamma$, $\gamma=\alpha,\beta$, then $R(e):=m-2$. If $e$ satisfies a codimension one tangency condition $P_f$ for some $f\in\underline{\eta}^\gamma$, $\gamma=\alpha,\beta$, then $R(e):=m-1$. If $e$ is a contracted end of $C$ satisfying a point condition, then $R(e):=m$. Otherwise, set $R(e):=0$. We refer to these flows on the ends as \textit{induced flows} from the tangency and point conditions.
\end{definition}

\begin{example}
Let $C$ be the tropical stable map depicted in Figure \ref{Figure28} that contributes to the number $N_{\Delta_1^3\left( (0,1,0,\dots), (1,0,\dots)\right)}\left(p_{1},L_3,P_6\right)$ as in Example \ref{ex:ev-mult}. Step $1$ in Figure \ref{Figure31} shows the flows conditions induce on ends of $C$. Example \ref{ex:constructing_flow_on_tree} shows the flow structure Construction \ref{constr:condition_flow_to_trop_curve} assigns so $C$. Notice that the resulting flow structure is a condition flow of type $3$ and that the constructed flow structure does not depend on the order of the vertices from which the flows were spread.
\end{example}

\begin{proposition}\label{prop:condition_flow_to_trop_curve}
Let $C$ be a tropical stable map that contributes to the enumerative number $N_{\Delta_d^m(\alpha,\beta)}\left(p_{[n]},L_{\underline{\kappa}^\alpha},L_{\underline{\kappa}^\beta},P_{\underline{\eta}^\alpha},P_{\underline{\eta}^\beta},\lambda_{[l]} \right)$ such that flows on its ends are induced from the point and tangency conditions as in Definition \ref{def:induced_flows}.
Then Construction \ref{constr:condition_flow_to_trop_curve} associates the unique condition flow of type $m$ to $C$, where the leak function is given by $\leak(v)=m$ for all vertices $v$ of $C$.
\end{proposition}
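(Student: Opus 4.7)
Uniqueness is immediate from Lemma \ref{lemma:condition_flow_unique}: once the leak function $\leak\equiv m$ and the induced end-flows are specified, at most one condition flow of type $m$ can exist on $C$. Moreover, termination of Construction \ref{constr:condition_flow_to_trop_curve} was already established after Example \ref{ex:constructing_flow_on_tree}. The content of the proposition therefore reduces to verifying that the construction's output satisfies \ref{P1} and \ref{P2} with $\leak(v)=m$. Because the update rule $R(e_{\out,i})=\flow(v)-R(e_{\inc,i})-1$ forces $R(e_{\inc,i})+R(e_{\out,i})=m-1$ on any internal edge whose vertex endpoint is processed with $\flow(v)=m$, property \ref{P1} will follow once \ref{P2} is known. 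The heart of the proof is therefore the vertex balance $\flow(v)=m$ at every vertex $v$ of $C$.

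I would prove $\flow(v)=m$ by induction on the number of vertices $V$ of $C$. In the base case $V=1$, the unique vertex has valence $N=\#\Delta_d^m(\alpha,\beta)+n=3+\#\lambda_v$, and the general-position dimension equation \eqref{eq:def:gen_pos_dimension} collapses to $m=m\,n_v+(m-1)\eta_v+(m-2)\kappa_v$, which is exactly $\flow(v)$ under Definition \ref{def:induced_flows}. For the inductive step I pick a leaf-vertex $v_0$, i.e.\ one adjacent to exactly one internal edge $e$. Processing $v_0$ first in the construction assigns $R(e_{\out,v_0})=\flow(v_0)-1$; delete $v_0$ and its ends from $C$ and replace $e$ by a new end at the neighbor $v_1$ carrying induced flow $\flow(v_0)-1$, yielding a tree $C'$ with one fewer vertex. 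The analogue of \eqref{eq:def:gen_pos_dimension} stays consistent on $C'$, so the inductive hypothesis gives $\flow(v)=m$ at every vertex of $C'$, hence at every vertex of $C$ other than $v_0$. A local version of \eqref{eq:def:gen_pos_dimension} at $v_0$ --- using $\val(v_0)=3+\#\lambda_{v_0}$ and counting only the ends and cross-ratios incident to $v_0$ --- then gives $\flow(v_0)=m$ as well, closing the induction and yielding \ref{P1} as a corollary.

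The principal obstacle is justifying the leaf step: one must verify both that $\flow(v_0)\geq 1$ so that $R(e_{\out,v_0})=\flow(v_0)-1$ is a legitimate non-negative flow, and that the replacement end at $v_1$ integrates cleanly into the dimension count on $C'$. Both points rely on rigidity of $C$. By Corollary \ref{cor:CR_pfade_ueber_alle_edges_an_vertex} every edge at a higher-valent vertex of $C$ lies on a path to an end involved in a cross-ratio satisfied at that vertex; together with zero-dimensionality of the intersection product in Definition \ref{def:general_pos} this prevents any subtree of $C$ from carrying ``excess'' end flow, i.e.\ $\sum_{\text{ends in }T}R(\mathrm{end}) \leq |E_\text{int}(T)|+m$ for every subtree $T$ cut off along a single internal edge. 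This is the higher-dimensional analogue of the classical no-string rigidity criterion, and it is precisely what the global identity $\leak(v)=m$ quantitatively encodes. Once this nonnegativity is in place, the construction's fixed-point iteration stabilizes at the unique condition flow of type $m$ with leak $m$, and the proposition follows.
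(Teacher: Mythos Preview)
Your strategy diverges from the paper's and leaves a genuine gap. The paper does not induct on the vertex count. Instead it assigns to every half-edge a geometric meaning---the codimension of the \emph{spatial restriction} that one side of the edge imposes on the position of the adjacent vertex on the other side---and checks that this restriction-flow obeys the update rule \eqref{eq:flow_spread}, hence agrees with the output of Construction \ref{constr:condition_flow_to_trop_curve}. Property \ref{P1} is then proved by cutting an \emph{arbitrary} bounded edge $e$ into components $C_1,C_2$ and applying the dimension identity \eqref{eq:general_dimension_count} to each piece separately; adding the two identities and using $\#\Delta_1+\#\Delta_2-2=\#\Delta^m_d(\alpha,\beta)$ yields $R(e_1)+R(e_2)=m-1$ at once, with no recursion. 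Property \ref{P2} with $\leak\equiv m$ is then immediate from rigidity: every vertex of a curve in a zero-dimensional cycle receives exactly $m$ incoming restrictions.

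The problem with your induction is the sentence ``the inductive hypothesis gives $\flow(v)=m$ at every vertex of $C'$.'' The tree $C'$ is \emph{not} a tropical stable map contributing to any number of the form in the proposition: its new end at $v_1$ carries flow $F_0-1$, which need not lie in $\{0,m-2,m-1,m\}$ and hence corresponds to no condition type in Definition \ref{def:induced_flows}. So the proposition, as stated, cannot be invoked on $C'$. To repair this you would have to prove a purely combinatorial statement about trees with arbitrary nonnegative end-flows satisfying the global identity $N-3+m=F+l$; but that statement is false in general (for $m=3$, a trivalent path on three vertices with end-flows $2,2$ at one leaf, $1$ in the middle, $0,0$ at the other leaf satisfies the global identity yet the construction terminates with vertex flows $4,4,3$). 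What must additionally be imposed is the two-sided bound $|E_{\mathrm{int}}(T)|+1\le \sum_{\text{ends of }T}R(\text{end})\le |E_{\mathrm{int}}(T)|+m$ for every subtree $T$ cut off along a single edge, equivalent to $0\le R(e_i)\le m-1$ on the cut end. You assert the upper half in your last paragraph but give no argument---Corollary \ref{cor:CR_pfade_ueber_alle_edges_an_vertex} concerns cross-ratio paths and is not relevant here---and you do not state the lower half at all. These bounds are exactly what the paper's cut-and-count argument supplies in one stroke; your induction relocates rather than resolves the difficulty.
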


\begin{proof}
Given a tropical stable map $C$ contributing to $N_{\Delta_d^m(\alpha,\beta)}\left(p_{[n]},L_{\underline{\kappa}^\alpha},L_{\underline{\kappa}^\beta},P_{\underline{\eta}^\alpha},P_{\underline{\eta}^\beta},\lambda_{[l]} \right)$, we give another interpretation of the flow constructed in \ref{constr:condition_flow_to_trop_curve}, namely in terms of spatial restrictions the vertices of $C$ impose on their neighbors. By restrictions we mean the following: Let $\Gamma$ be the combinatorial type of $C$, i.e. $C$ without its metric structure. Since $C$ fulfills all given conditions, we are able to re-embed $\Gamma$ into $\mathbb{R}^m$, i.e. we are able to reconstruct the lengths of all edges of $C$. To do so, we proceed in the following way: Let $v\in C$ be a vertex adjacent to a contracted end satisfying a point condition $p_i$ for some $i$, then choose $\Gamma\to\mathbb{R}^m$ in such a way that $v\mapsto p_i$. Let $e$ be a bounded edge adjacent to $v$ and some other vertex $v'$. Since $\Gamma$ knows the direction of $e$ in $\mathbb{R}^m$, fixing $v$ imposes an $(m-1)$-dimensional restriction on the position of $v'$. In other word, $v'$ can only move along the direction of $e$. We encode this restriction from $v$ to $v'$ into $\Gamma$ by interpreting $e$ as two glued half-edges $e_1,e_2$, where the half-edge $e_i$ adjacent to $v$ is equipped with a number $R(e_i)=m-1$. We refer to this half-edge as outgoing edge of $v$ or as incoming edge of $v'$. Iteratively, the restrictions spread along $\Gamma$, i.e. let $e'$ be another bounded edge adjacent to $v'$ and some other vertex $v''\neq v$. Since we know the direction of $e'$ in $\mathbb{R}^m$, the $1$-dimensional movement of $v'$ allows $v''$ to only move along two directions. More precisely, we may vary the length of $e$ and the length of $e'$. Said differently, $v'$ imposes at least an $(m-2)$-dimensional restriction on $v''$.

Obviously, we could also have started with a tangency condition, i.e. some other restriction incoming to a vertex via an end.

We claim that the flow structure constructed from restrictions passing from a vertex to another via half-edges fulfills the procedure equation \eqref{eq:flow_spread} describes in Construction \ref{constr:condition_flow_to_trop_curve}.

Denote the equations of \eqref{eq:flow_spread} by I and II, from top to bottom.
\begin{itemize}
\item
Let $v$ be a vertex that is adjacent to another vertex $v'$ via an edge $e$ such that $v$ gains all its spatial restrictions via the incoming half-edge $e_{\inc}$ of $e$. Then $v$ does not impose a spatial restriction to $v'$ via its outgoing half-edge $e_{\out}$ of $e$. Hence II holds. Said differently, a vertex cannot pass spatial directions back to an adjacent vertex from which they came.
\item
I holds since repeating the argument of II yields the summand $R(e_{\inc,i})$ of \eqref{eq:flow_spread}, and as we saw before, passing over a vertex lowers the number of restrictions in general by $1$, where in general means that edges $e,e'$ adjacent to the same vertex $v$ are usually not parallel -- if they are parallel, then there is (because $C$ is fixed by the general positioned conditions) a end adjacent to $v$ that either satisfies a point condition or some tangency condition. Notice that in both of these two special cases I holds.
\end{itemize}
Hence our flow structure on $\Gamma$ defined as restrictions passing from one vertex to another is governed by the same equations as the flow structure assigned to $\Gamma$ by Construction \ref{constr:condition_flow_to_trop_curve}.
Hence these two flow structures on $\Gamma$ coincide. Next, we claim that the flow structure on $\Gamma$ interpreted as spatial restrictions is a condition flow of type $m$, i.e. it satisfies \ref{P1} and \ref{P2} of Definition \ref{def:condition_flow}. Given a bounded edge $e$ of $C$, cut it and stretch it to infinity. Denote the two components of $C$ obtained that way by $C_1,C_2$, where $e_1$ is the end of $C_1$ that used to be $e$ and $e_2$ is the analogous end of $C_2$. We use the following notation: Let $\Delta_i$ be the degree of $C_i$, let $\underline{n_i}\subset [n]$ be the point conditions satisfied by $C_i$, let $\underline{l_i}\subset [l]$ be the degenerated cross-ratios satisfied by $C_i$, let $\underline{\kappa_i}\subset \underline{\kappa}^\alpha\cup\underline{\kappa}^\beta$ be the codimension two tangency conditions satisfied by $C_i$ and let $\underline{\eta_i} \subset \underline{\eta}^\alpha\cup\underline{\eta}^\beta$ be the codimension one tangency conditions satisfied by $C_i$ for $i=1,2$. Then
\begin{align}\label{eq:prop_proof1}
\#\Delta_1+\#\Delta_2-2=\#\Delta_d^m(\alpha,\beta)
\end{align}
and
\begin{align}\label{eq:prop_proof2}
\#\Delta_i-3+m=(m-1)\cdot \#\underline{n_i}+\#\underline{l_i}+(m-1)\cdot \#\underline{\eta_i}+(m-2)\cdot \#\underline{\kappa_i}+R(e_i)
\end{align}
hold for $i=1,2$. Adding \eqref{eq:prop_proof1} and \eqref{eq:prop_proof2}, and applying \eqref{eq:general_dimension_count} yields \ref{P1}. Moreover, \ref{P2} can be satisfied by defining the leak function this way. Then the leak function coincides with the one given in Proposition \ref{prop:condition_flow_to_trop_curve} since all conditions are in general position. Moreover, this condition flow is unique due to Lemma \ref{lemma:condition_flow_unique}.
\end{proof}

Proposition \ref{prop:condition_flow_to_trop_curve} allows us to think about condition flows the way we think about strings in tropical curves in $\mathbb{R}^2$: Proposition \ref{prop:condition_flow_to_trop_curve} is an exclusion criterion for stable maps not contributing to $N_{\Delta_d^m(\alpha,\beta)}\left(p_{[n]},L_{\underline{\kappa}^\alpha},L_{\underline{\kappa}^\beta},P_{\underline{\eta}^\alpha},P_{\underline{\eta}^\beta},\lambda_{[l]} \right)$ on the level of combinatorial types. If $C$ is the combinatorial type of a tropical stable map $C'$ and Construction \ref{constr:condition_flow_to_trop_curve} does not lead to a condition flow of type $m$ with the leak function given in Proposition \ref{prop:condition_flow_to_trop_curve}, then $C'$ cannot contribute to $N_{\Delta_d^m(\alpha,\beta)}\left(p_{[n]},L_{\underline{\kappa}^\alpha},L_{\underline{\kappa}^\beta},P_{\underline{\eta}^\alpha},P_{\underline{\eta}^\beta},\lambda_{[l]} \right)$.

\begin{remark}
Another way to think about flows is the following: each vertex $v$ of a tropical curve in $\mathbb{R}^m$ is a point in $\mathbb{R}^m$, i.e. the minimal number of affine linear equations needed to cut out $v$ is $m$. The flow of $v$ is the number of equations $v$ needs to satisfy. These equations arise from imposing conditions to our tropical stable map as in the proof of Proposition \ref{prop:condition_flow_to_trop_curve}, and these equations are affine linear since tropical stable maps are piecewise linear. Choosing all conditions in general conditions means to choose the minimal number of conditions needed to fix our tropical stable map, i.e. the matrix of affine linear equations associated to each vertex needs to have full rank, or in other words, the flow of each vertex needs to be $m$ if each vertex should be fixed.

If there are not enough conditions to fix a curve, then a parts of the curve are movable. These movable parts are encoded in the flow structure since all vertices with flow less than $m$ are movable.
The special case of one missing condition and one \textit{movable component} for curves in $\mathbb{R}^2$ was studied in \cite{GeneralKontsevich}. We remark here, that we also could have used flows there to describe which parts of a curve are movable.
\end{remark}

\section{Floor decomposition}

From now on we specialize to tropical space curves, i.e. tropical stable maps to $\mathbb{R}^3$.

\subsection{Floor decomposed tropical curves}
Our first aim it to show that we may assume that the tropical stable maps we need to consider are \textit{floor decomposed}, see Proposition \ref{prop:floor_decomposed}. We remark, that Proposition \ref{prop:floor_decomposed} can be generalized to tropical stable maps to $\mathbb{R}^m$.

\begin{definition}[Stretched configuration]\label{def:stretched_config}
Let $\pi:\mathbb{R}^3\to\mathbb{R}^2$ be the natural projection that forgets the last coodinate as in Notation \ref{notation:projection_moduli_spaces}. Let $\epsilon>0$ be a real number and let $B_\epsilon:=(-\epsilon,\epsilon)\times (-\epsilon,\epsilon)$ be a box in $\mathbb{R}^2$.
Let $p_{[n]},L_{\underline{\kappa}^\alpha},L_{\underline{\kappa}^\beta},P_{\underline{\eta}^\alpha},P_{\underline{\eta}^\beta},\lambda_{[l]}$ be general positioned conditions as in Definition \ref{def:general_pos}. These conditions are said to be in \textit{stretched configuration} if:
\begin{itemize}
\item
$\pi\left(P_{\underline{\eta}^\gamma}\right)\subset B_{\epsilon}$ for $\gamma=\alpha,\beta$,
\item
$L_k^{(0)}\in B_{\epsilon}$, where $L_k^{(0)}$ denotes the $0$-skeleton, i.e. the vertex of $L_k$ for $k\in \underline{\kappa}^\alpha\cup\underline{\kappa}^\beta$,
\item
$\pi(p_{[n]})\subset B_\epsilon$ and the distances of the $z$-coordinates $p_{i,z}$ of the points $p_i$ are large compared to the size of the box $B_\epsilon$, i.e.
$|p_{i+1,z}-p_{i,z}|>>\epsilon$ for $i=1,\dots,n-1$.
\end{itemize}
\end{definition}

\begin{remark}
Stretched configurations exist, because the set of all positions of general positioned conditions is dense in the set of positions of all possible conditions, i.e. the property of being in general position can be preserved when stretching the points $p_i$ in $z$-direction.
\end{remark}

\begin{definition}\label{def:floor_decomposed}
An \textit{elevator} of a tropical stable map $C$ of degree $\Delta^3_d\left(\alpha,\beta\right)$ is an edge whose primitive direction is $(0,0,\pm 1)$. A connected component $C_i$ of $C$ that remains if the interiors of the elevators are removed is called \textit{floor} of the curve $C$. The number $s_i\in\mathbb{N}$ of ends of $C_i$ that are of direction $(1,1,1)$ is called the \textit{size} of the floor $C_i$. A tropical stable map that is fixed by general positioned conditions as in Definition \ref{def:general_pos} is called \textit{floor decomposed} if each of the points $p_{[n]}$ lies on its own floor. Notice that floors can be of size zero, i.e. a floor can have exactly one vertex.

Later we equip floors with additional ends by cutting elevators (Construction \ref{constr:cutting_along_elevators}) and stretching them to infinity. By abuse of notation we refer to these tropical stable maps as floors as well when no confusion can occur.
\end{definition}

\begin{example}\label{ex:floor_decomposed_tropical_stable_map}
Figure \ref{Figure32} shows a floor decomposed tropical stable map $C$. The labels of some of its ends are indicated with circled numbers. The ends labeled with $1$ and $2$ are drawn dotted which indicates that these ends are contracted. The other labeled ends are of primitive direction $\pm e_3\in\mathbb{R}^3$ using Notation \ref{notation:standard_directions_and_alpha_beta_degrees}. The end labeled with $8$ is of weight two while all other ends are of weight one such that the degree of $C$ is $\Delta^3_4\left( (4,1,0,\dots), (2,0,\dots) \right)$, see Notation \ref{notation:standard_directions_and_alpha_beta_degrees}. The general positioned conditions $C$ satisfies are the following: The end labeled with $1$ (resp. $2$) satisfies a point condition $p_1$ (resp. $p_2$). The ends labeled with $i\in [9]\backslash [2]$ satisfy codimension one tangency conditions $P_i$ for $i\in [9]\backslash [2]$. Moreover, $C$ satisfies the degenerated tropical cross-ratio $\lambda_1=\lbrace 1,2,3,7 \rbrace$ at its only $4$-valent vertex.

The elevator of $C$ has weight two and is drawn dashed. Thus $C$ has two floors $C_i$ for $i=1,2$, where the point $p_i$ lies on $C_i$ for $i=1,2$.

\begin{figure}[H]
\centering
\def\svgwidth{450pt}
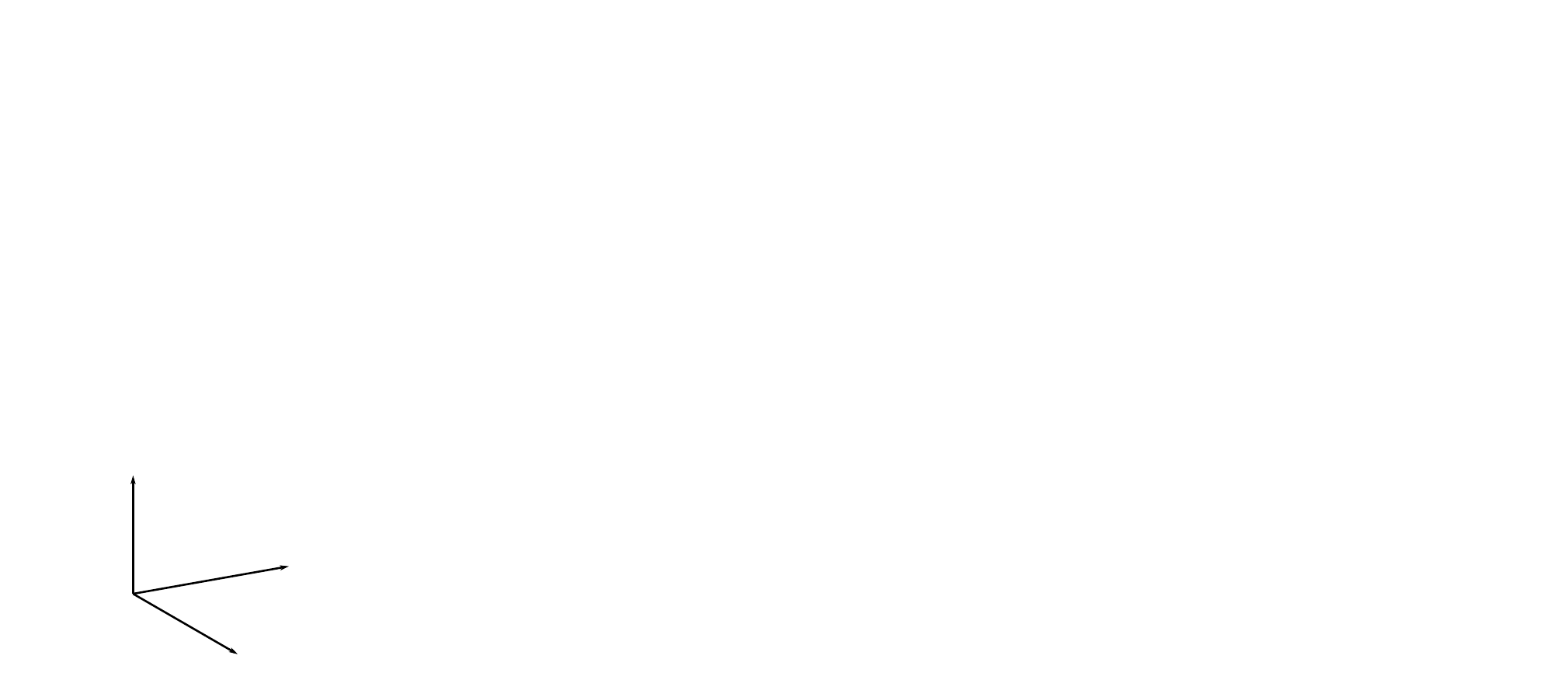
\caption{The tropical stable map $C$ from Example \ref{ex:floor_decomposed_tropical_stable_map} which is floor decomposed. It has two floors $C_i$ for $i=1,2$, where $C_1$ is of size $s_1=3$ and $C_2$ is of size $s_2=1$. The dashed edge is the elevator of weight two of $C$.}
\label{Figure32}
\end{figure}
\end{example}

\begin{proposition}\label{prop:floor_decomposed}
Let $p_{[n]},L_{\underline{\kappa}^\alpha},L_{\underline{\kappa}^\beta},P_{\underline{\eta}^\alpha},P_{\underline{\eta}^\beta},\lambda_{[l]}$ be conditions in a stretched configuration as in Definition \ref{def:stretched_config} such that each entry of each degenerated cross-ratio is a label of a contracted end or a label of an end whose primitive direction is $(0,0,\pm 1)\in\mathbb{R}^3$. Then every tropical stable map contributing to
$N_{\Delta_d^3(\alpha,\beta)}\left(p_{[n]},L_{\underline{\kappa}^\alpha},L_{\underline{\kappa}^\beta},P_{\underline{\eta}^\alpha},P_{\underline{\eta}^\beta},\lambda_{[l]} \right)$
is floor decomposed.
\end{proposition}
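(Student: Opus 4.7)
The plan is a proof by contradiction: assume some floor $F$ of the contributing stable map $C$ contains two of the point conditions, say $p_i$ and $p_j$, and derive a contradiction with the large $z$-separation $|p_{i,z}-p_{j,z}|\gg\epsilon$ enforced by the stretched configuration. Let $P$ be the unique path in the tree $F$ from the vertex carrying the contracted end at $p_i$ to the vertex carrying the contracted end at $p_j$. Since only the interiors of the elevators were removed when defining $F$, every edge $e\in P$ has primitive direction $(a(e),b(e),c(e))$ with $(a(e),b(e))\neq 0$; in particular its horizontal primitive norm is at least one.

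The argument rests on two uniform bounds depending only on $d,\alpha,\beta$. First, a slope bound $|c(e)|\leq C_1$ for every bounded non-elevator edge of $C$: since $C$ is rational, the weighted direction $\omega(e)\cdot(a(e),b(e),c(e))$ equals the signed sum of the direction vectors of the ends lying on one side of $e$, and these come from $\Delta_d^3(\alpha,\beta)$, so each has bounded entries and there are at most $\#\Delta_d^3(\alpha,\beta)+n$ of them. Second, a horizontal length bound $l(e)\leq C_2\,\epsilon$ for every bounded non-elevator edge $e\in P$: the projection $\pi(C)$ is (the support of) a rational plane tropical curve whose point and tangency conditions all live in $B_\epsilon$, so by a standard rigidity argument for rational plane tropical curves satisfying conditions in a compact box (as used in \cite{CR1}) the bounded part of $\pi(C)$, and a fortiori of $\pi(F)$, is contained in a ball of radius $O(\epsilon)$ depending only on the combinatorial type; because the horizontal primitive part of $e$ is a nonzero integer vector, the length $l(e)$ equals the length of its image in $\pi(F)$, which is bounded by the diameter of this ball. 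The restriction on the tropical cross-ratios---that every entry label a contracted end or an end of primitive direction $(0,0,\pm 1)$---guarantees that the degenerate cross-ratio conditions do not spoil this horizontal rigidity.

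Combining the two bounds and summing along $P$, whose number of edges is bounded by the combinatorial complexity $C_3$ of $C$,
\begin{align*}
|p_{j,z}-p_{i,z}|\leq\sum_{e\in P}l(e)\cdot|c(e)|\leq C_1\,C_2\,C_3\,\epsilon,
\end{align*}
which contradicts $|p_{i,z}-p_{j,z}|\gg\epsilon$ once the inequality $\gg$ is read as ``larger than $C_1C_2C_3$''. As $p_i,p_j$ were arbitrary, no two point conditions share a floor of $C$, so $C$ is floor decomposed.

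The main obstacle is the horizontal length bound. Cutting the elevators and projecting via $\pi$ changes the combinatorial type, so one has to argue carefully that $\pi(C)$ genuinely is the support of a plane rational tropical curve satisfying horizontal versions of the original conditions, and that the rigidity statement applies uniformly across combinatorial types. Making this precise essentially amounts to rerunning the plane rigidity analysis of \cite{CR1}, with the condition-flow bookkeeping of the previous section tracking which contracted ends of $\pi(C)$ arise from the original contracted ends at $p_i$ and which arise from elevators; once this is done the slope bound and the arithmetic contradiction above close the proof, and essentially the same argument yields the remark that Proposition \ref{prop:floor_decomposed} generalises to $\mathbb{R}^m$.
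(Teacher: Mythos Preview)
Your approach is genuinely different from the paper's, and the gap you yourself flag as ``the main obstacle'' is in fact fatal as written.

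The paper does not argue along the path between two point conditions at all. Instead it proves the stronger statement that \emph{every} vertex of $C$ lies in $B_\epsilon\times\mathbb{R}$, by taking a vertex $v$ of extremal $x$-coordinate (or $y$-coordinate) outside the box and showing it admits a $1$-dimensional movement, contradicting that $C$ lies in a $0$-dimensional cycle. The cross-ratio hypothesis enters in a precise way: such an extremal $v$ is adjacent to an end of direction $(1,1,1)$, and if $\val(v)>3$ then by Corollary~\ref{cor:CR_pfade_ueber_alle_edges_an_vertex} this end would have to be an entry of some $\lambda_j$, contradicting the assumption that all cross-ratio entries are labels of contracted or $\pm e_3$ ends. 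Hence $v$ is $3$-valent, and a short balancing analysis produces the movement. Once all vertices are in the box, the finiteness of possible edge directions gives the uniform slope bound and floor decomposition follows immediately.

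Your horizontal length bound is precisely this ``vertices in the box'' statement in disguise, and you have not proved it. The appeal to plane rigidity for $\pi(C)$ does not work: when you project, every elevator of $C$ becomes a new contracted end of $\pi(C)$ carrying \emph{no} condition, so $\pi(C)$ is heavily under-determined by the projected conditions and can have vertices arbitrarily far from $B_\epsilon$ in its moduli. The specific curve $\pi(C)$ is of course fixed, but its vertex positions are governed by the full $3$-dimensional constraints, not by the $2$-dimensional ones you list, so nothing in \cite{CR1} bounds them. Saying the cross-ratio restriction ``does not spoil horizontal rigidity'' is not an argument; in the paper's proof this restriction is used concretely via Corollary~\ref{cor:CR_pfade_ueber_alle_edges_an_vertex} to force $3$-valence at the extremal vertex, and without an analogue of that step your proof has no place where the hypothesis actually does work. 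As it stands, establishing your horizontal length bound requires an argument of the same strength as the proposition itself.
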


\begin{proof}
We follow arguments used in \cite{unpublishedFloorDiagrams,Torchiani}, where an analogous statement is proved for the case without tropical cross-ratios. To incorporate tropical cross-ratios, we use Corollary \ref{cor:CR_pfade_ueber_alle_edges_an_vertex} as we did in \cite{CR1}.

Let $C$ be a tropical stable map contributing to $N_{\Delta_d^3(\alpha,\beta)}\left(p_{[n]},L_{\underline{\kappa}^\alpha},L_{\underline{\kappa}^\beta},P_{\underline{\eta}^\alpha},P_{\underline{\eta}^\beta},\lambda_{[l]} \right)$.
The set of all possible bounded edges' directions is finite because of the balancing condition and the fixed directions of ends. If $\epsilon$ from Definition \ref{def:stretched_config} is sufficiently small compared to the distances between the points $p_{[n]}$ and all vertices of $C$ lie inside the box $B_\epsilon\times\mathbb{R}$, then $C$ decomposes into parts that are connected by horizontal edges. So it is sufficient to show that all vertices of $C$ lie inside $B_\epsilon\times\mathbb{R}$ from Definition \ref{def:stretched_config}.

Assume $v\in C$ is a vertex whose $x$-coordinate is maximal and $v$ lies outside of $B_\epsilon\times\mathbb{R}$. Since the $x$-coordinate of $v$ is maximal, there is an end $e$ of direction $(1,1,1)$ adjacent to $v$. If $v$ is not $3$-valent, then there is a $j$ such that $\lambda_j\in\lambda_v$ and the label of $e$ appears as an entry in $\lambda_j$ because of Corollary \ref{cor:CR_pfade_ueber_alle_edges_an_vertex}. Due to our assumptions on the tropical cross-ratios $\lambda_{[l]}$, the end $e$ cannot be an entry of any of these, which is a contradiction. Hence $v$ must be $3$-valent. Denote the edges adjacent to $v$ by $e_1,e_2,e$, where $e$ is, as before, an end of direction $(1,1,1)$. If $e_1$ is an end parallel to $(0,0,-1)$, then $v$ allows a $1$-dimensional movement in the direction of $e_2$, since $e_1$ either satisfies no condition or satisfies a codimension two tangency condition $L_k$ for some $k$ with $\pi(e_2),\pi(e)\subset L_k$, where $\pi$ is the natural projection that forgets the $z$-coordinate of $\mathbb{R}^3$. Thus $e_1,e_2$ are bounded edges. Since $e$ is an end of direction $(1,1,1)$ and thus of weight $1$, and $v$ is maximal with respect to its $x$-coordinate, it follows (without loss of generality) that the $x$-coordinate of the direction vector of $e_1$ is $0$ and the $x$-coordinate of the direction vector of $e_2$ is $-1$. Denote the vertex adjacent to $v$ via $e_1$ by $v'$. Notice that $v'$ is also $3$-valent, adjacent to an end $e'$ parallel to $e$ and a bounded edge $\tilde{e}\neq e_1$. By balancing, $e_2,e,e_1,e',\tilde{e}$ lie in the affine hyperplane $\langle e_1,e\rangle+v$ of $\mathbb{R}^3$. Thus $v$ allows a $1$-dimensional movement in the direction of $e_2$ which is a contradiction .

Notice that similar arguments hold if $v$ is chosen in such a way that its $y$-coordinate is maximal or its $x$-coordinate (resp. $y$-coordinate) is minimal. So in any case a $1$-dimensional movement leads to a contradiction. Hence all vertices of $C$ lie inside the box $B_\epsilon\times\mathbb{R}$. Therefore $C$ is floor decomposed.
\end{proof}

\begin{notation}\label{notation:1/1_edge_and_2/0_edge}
Whenever we refer to the condition flow of $C$, where $C$ is a floor decomposed tropical stable map contributing to $N_{\Delta_d^3(\alpha,\beta)}\left(p_{[n]},L_{\underline{\kappa}^\alpha},L_{\underline{\kappa}^\beta},P_{\underline{\eta}^\alpha},P_{\underline{\eta}^\beta},\lambda_{[l]} \right)$, we mean that $C$ is equipped with the condition flow of type $3$ associated to $C$ using Construction \ref{constr:condition_flow_to_trop_curve}. In particular, given a bounded edge $e$ of $C$ that consists of two half-edges $e_1,e_2$, we refer to $e$ as $1/1$ edge if $R(e_1)=R(e_2)=1$, and we refer to $e$ as $2/0$ edge if either $R(e_1)=2$ and $R(e_2)=0$ or $R(e_1)=0$ and $R(e_2)=2$.
\end{notation}

\begin{definition}[Floor graph]\label{def:floor_graph}
Let $p_{[n]},L_{\underline{\kappa}^\alpha},L_{\underline{\kappa}^\beta},P_{\underline{\eta}^\alpha},P_{\underline{\eta}^\beta},\lambda_{[l]}$ be conditions in a stretched configuration such that the $z$-coordinate of $p_i$ is greater than the $z$-coordinate of $p_j$ if $i>j$. Let $C$ be a tropical stable map that is fixed by these conditions. The tropical stable map $C$ is floor decomposed by Proposition \ref{prop:floor_decomposed}. Given $C$, we associate a so-called \textit{floor graph} $\Gamma_C$, i.e. a weighted graph on an ordered set of vertices with a flow structure, to $C$ the following way:
each vertex of $\Gamma_C$ corresponds to a floor of $C$, an edge of $\Gamma_C$ corresponds to an elevator of $C$ and connects the vertices of $\Gamma_C$ that correspond to the floors the elevator connects in $C$. Weights on the edges of $\Gamma_C$ are induced by weights on the elevators of $C$. The given point conditions $p_{[n]}$ are totally ordered according to their $z$-coordinates. Thus the floors of the floor decomposed tropical stable map $C$ are also totally ordered, i.e. the vertices $v_{[n]}$ of $\Gamma_C$ are ordered as well, namely $v_1<\cdots <v_n$.
Moreover, a flow structure on $\Gamma_C$ is induced by the flows on the elevators of $C$ (see Notation \ref{notation:1/1_edge_and_2/0_edge}), i.e. if an elevator is a $1/1$ (resp. $2/0$) elevator, then its associated edge in $\Gamma_C$ is a $1/1$ (resp. $2/0$) edge.
\end{definition}

\begin{example}\label{ex:floor_graph}
Figure \ref{Figure33} shows the floor graph $\Gamma_C$ associated to the floor decomposed tropical stable map $C$ from Example \ref{ex:floor_decomposed_tropical_stable_map}. Notice that the elevator of $C$ is a $1/1$ elevator.

\begin{figure}[H]
\centering
\def\svgwidth{150pt}
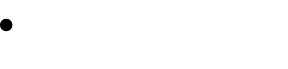
\caption{The floor graph $\Gamma_C$ associated to the floor decomposed tropical stable map $C$ from Example \ref{ex:floor_decomposed_tropical_stable_map}. The vertex $v_i$ of $\Gamma_C$ corresponds to the floor $C_i$ of $C$ for $i=1,2$.}
\label{Figure33}
\end{figure}
\end{example}

\subsection{Cutting elevators}
The following construction allows us to break floor decomposed tropical stable map into their parts by cutting elevators.

\begin{construction}[Cutting elevators]\label{constr:cutting_along_elevators}
Let $p_{[n]},L_{\underline{\kappa}^\alpha},L_{\underline{\kappa}^\beta},P_{\underline{\eta}^\alpha},P_{\underline{\eta}^\beta},\lambda_{[l]}$ be conditions in stretched configuration (with notation from Definition \ref{def:general_pos}) and let $C$ be a floor decomposed tropical stable map that is fixed by these conditions. If $e$ is an elevator of $C$, then we construct two tropical stable maps $C_1,C_2$ from $C$ by cutting $e$. The loose ends of $e$ are stretched to infinity. These ends (with its induced weights) are denoted by $e_i\in C_i$ for $i=1,2$ and the vertex adjacent to $e_i$ is denoted by $v_i$ for $i=1,2$. By abuse of notation we also refer to the label of $e_i$ by $e_i$.

The condition flow on $C$ induces flow structures on $C_i$, where the flow on $e_i$ is given by the flow on $e$ that is incoming to $v_i$ for $i=1,2$.

The degenerated cross-ratios are \textit{adapted} to the cutting the following way: If $\lambda_j$ is a degenerated cross-ratio that is satisfied at some vertex $v\in C_i$ for $i=1,2$, then, by the path criterion (Remark \ref{remark:path_criterion}), either all entries of $\lambda_j$ are labels of ends of $C_i$ or $3$ entries of $\lambda_j$ are labels of ends of $C_i$ and one entry $\beta$ is a label of an end of $C_t$ for $i\neq t\in\lbrace 1,2\rbrace$. In the first case, we do not change $\lambda_j$ and in the latter case, we replace the entry $\beta$ of $\lambda_j$ by $e_i$. We denote a degenerated cross-ratio that we adapted to $e_i$ by $\lambda_j^{\to e_i}$.

If $e$ is a $2/0$ elevator, then the component $C_i$ to which $2$ is the incoming flow along $e$ satisfies the codimension one tangency condition $P_{e_i}$, given by $\pi(v_t)$ for $i\neq t\in\lbrace 1,2\rbrace$, where $\pi$ is the projection from Notation \ref{notation:projection_moduli_spaces}.
If $e$ is a $1/1$ elevator, then each $C_i$ satisfies a codimension two condition $L_{e_i}$ for $i=1,2$, given by the projection $\pi$ of the movement of the vertices $v_i$. Notice that ends of $L_{e_i}$ are a priori not of standard direction. However, as we see with Corollary \ref{cor:cut_1/1_elevators_yield_standard_directions}, we can assume that $L_{e_i}$ for $i=1,2$ are --- like $L_{\underline{\kappa}^\alpha},L_{\underline{\kappa}^\beta}$ --- curves with ends of standard directions.

Denote the new sets of general positioned conditions each tropical stable map $C_i$ for $i=1,2$ satisfies by 
$p_{\underline{n_i}},L_{\underline{\kappa_i}^\alpha},L_{\underline{\kappa_i}^\beta},P_{\underline{\eta_i}^\alpha},P_{\underline{\eta_i}^\beta},\lambda^{\to e_i}_{\underline{l_i}}$.
Moreover, denote the degree of $C_i$ by $\Delta^3_{s_i}\left( \alpha^{i},\beta^{i} \right)$ for $i=1,2$ as in Notation \ref{notation:standard_directions_and_alpha_beta_degrees}.
\end{construction}

\begin{notation}\label{notation:adapt_CRs_to_multiple_cuts}
If Construction \ref{constr:cutting_along_elevators} is used to cut more than one elevator, it is can be necessary to adapt the cross-ratios $\lambda_{[l]}$ to more than one cut. This is denoted by $\lambda_j^{\to}$ for $\lambda_j\in\lambda_{[l]}$.
\end{notation}

\subsection{Multiplicities of floor decomposed curves}
Our next goal is to give a sufficiently local description of the multiplicity $\mult(C)$ (see Proposition \ref{prop:zsfssg_int_theory}) of a floor decomposed tropical stable map $C$, i.e. we aim for an expression of $\mult(C)$ which is a product of multiplicities, where each multiplicity is associated to a floor. The obvious approach of cutting elevator edges and determining multiplicities of the arising pieces works in case of $2/0$ elevators (see Notation \ref{notation:1/1_edge_and_2/0_edge}). It turns out that $1/1$ elevator edges that are adjacent to higher-valent vertices are more complicated. Here, we need to take the directions of the $1$-dimensional restrictions transported via a $1/1$ elevator into account.

A general tropical line $L\subset\mathbb{R}^2$ that is centered at $0$, with $3$ ends of standard directions and weight $1$ on each, is cut out by $\max_{(x,y)\in\mathbb{R}^2}(x,y,0)$. This allows us to look at degenerated lines as well.

\begin{definition}[Degenerated tropical lines]\label{def:degenerated_trop_lines}
The tropical intersections $L_{10}:=\max_{(x,y)\in\mathbb{R}^2}(x,0)\cdot \mathbb{R}^2$, $L_{01}:=\max_{(x,y)\in\mathbb{R}^2}(y,0)\cdot \mathbb{R}^2$ and $L_{1\text{-}1}:=\max_{(x,y)\in\mathbb{R}^2}(x,-y)\cdot \mathbb{R}^2$ and any translations thereof are called \textit{degenerated tropical lines}.
\end{definition}

\begin{figure}[H]
\centering
\def\svgwidth{350pt}
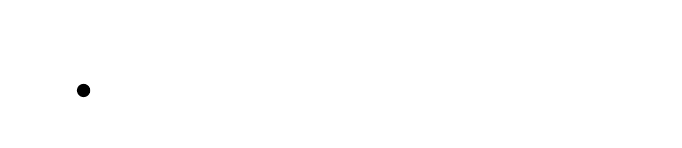
\caption{The degenerated tropical lines (from left to right) $L_{10},L_{01}$ and $L_{1\text{-}1}$ in $\mathbb{R}^2$ with ends of weight one.}
\label{Figure14}
\end{figure}

\begin{notation}[Replacing tangency conditions on $1/1$ edges]\label{notation:replacing_tangency_conditions_1/1-edge}
Let $C$ be a floor decomposed tropical stable map as in Construction \ref{constr:cutting_along_elevators} and let $e$ be a $1/1$ elevator. See Construction \ref{constr:cutting_along_elevators} for the following: cut $e$ and obtain two new tangency conditions $L_{e_1}$ (resp. $L_{e_2}$) that $C_1$ (resp. $C_2$) satisfy. Let $v_i$ be the vertex of $C_i$ that is adjacent to $e_i$ which satisfies $L_{e_i}$. Let $\pi(v_i)\in\mathbb{R}^2$ denote the projection of $v_i$ under $\pi$ along the elevator direction (see also Notation \ref{notation:projection_moduli_spaces}) for $i=1,2$. Let $L_{st}$ be a degenerated line of Definition \ref{def:degenerated_trop_lines} such that its vertex is translated to $\pi(v_1)$ (resp. $\pi(v_2)$). Let $C_{i,st}$ denote the tropical stable map that equals $C_i$, but where the $L_{e_i}$ tangency condition is replaced with $L_{st}$, i.e. $C_{i,st}$ satisfies $L_{st}$ instead of $L_{e_i}$ for $i=1,2$.

Notice that the multiplicities of $C_i$ and $C_{i,st}$ may differ. In particular, the multiplicity of $C_{i,st}$ may be zero, whereas the multiplicity of $C_i$ can be nonzero.
\end{notation}

\begin{example}\label{ex:replace_tangency_condition_floor_decomposed}
Consider the floor $C_2$ of $C$ from Example \ref{ex:floor_decomposed_tropical_stable_map}. The $\ev$-multiplicity of $C_{2,10}$ equals $1$ since it is the determinant of the $\ev$-Matrix of Example \ref{ex:ev-mult}. The $\ev$-multiplicity of $C_{2,01}$ is $0$ since $C_{2,01}$ is not fixed by its conditions.
\end{example}

\begin{lemma}\label{lemma:weights_cutting_elevators}
Let $C$ be a floor decomposed tropical stable map that contributes to the number $N_{\Delta_d^3(\alpha,\beta)}\left(p_{[n]},L_{\underline{\kappa}^\alpha},L_{\underline{\kappa}^\beta},P_{\underline{\eta}^\alpha},P_{\underline{\eta}^\beta},\lambda_{[l]} \right)$. Let $e$ be an elevator of weight $\omega(e)$ and cut $C$ along $e$ as in Construction \ref{constr:cutting_along_elevators} to obtain $C_1,C_2$.
\begin{itemize}
\item[(a)]
If $e$ is a $2/0$ elevator, then
\begin{align*}
\mult(C)=\omega(e)\cdot \mult(C_1)\cdot\mult(C_2).
\end{align*}
\item[(b)]
If $e$ is a $1/1$ elevator, then
\begin{align*}
\mult(C)=\omega(e)\cdot | \mult(C_{1,10})\cdot\mult(C_{2,01}) - \mult(C_{1,01})\cdot\mult(C_{2,10}) |,
\end{align*}
where tangency conditions are replaced as in Notation \ref{notation:replacing_tangency_conditions_1/1-edge}.
\end{itemize}
\end{lemma}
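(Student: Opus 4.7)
The first step is to reduce to the evaluation multiplicity by Proposition \ref{prop:zsfssg_int_theory}: $\mult(C)=\mult_{\ev}(C)\cdot\prod_{v\in C}\mult_{\CR}(v)$. By the path criterion (Remark \ref{remark:path_criterion}), every degenerated cross-ratio $\lambda_j$ is satisfied at a unique vertex of $C$, and the adaptation $\lambda_j^{\to e_i}$ performed in Construction \ref{constr:cutting_along_elevators} merely relabels the single entry lying on the opposite side of the cut, so the same vertex with the same combinatorial family of total resolutions (Definition \ref{def:CR_mult}) still underlies it. Consequently the cross-ratio product factorizes as $\prod_{v\in C_{1}}\mult_{\CR}(v)\cdot\prod_{v\in C_{2}}\mult_{\CR}(v)$, and it remains to prove the corresponding evaluation-multiplicity identities $\mult_{\ev}(C)=\omega(e)\cdot\mult_{\ev}(C_{1})\cdot\mult_{\ev}(C_{2})$ in case (a) and the analogous $2\times 2$ expression in case (b).

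To analyze $\mult_{\ev}(C)=|\det M(C)|$ I fix a basepoint on $C_{1}$ and order the columns of $M(C)$ as $[\textrm{basepoint},\,C_{1}\textrm{-edges},\,\ell_{e},\,C_{2}\textrm{-edges}]$, with rows split into $R_{1}$ (conditions on $C_{1}$) above $R_{2}$ (conditions on $C_{2}$). Since the basepoint lies on $C_{1}$, varying $\ell_{e}$ or any $C_{2}$-edge does not move any condition on $C_{1}$, so those columns vanish on $R_{1}$. Moreover, since the elevator has primitive direction $(0,0,\pm 1)$ and weight $\omega(e)$, the $\ell_{e}$-column equals $\omega(e)\cdot(0,0,\pm 1)$ on every point-condition row of $C_{2}$ and is zero on every tangency row (which sees only the horizontal coordinates). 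A direct dimension count from Definition \ref{def:general_pos} shows that the ``side $1$'' columns of $M(C)$ exceed $|R_{1}|$ by exactly $2$ in case (a) and by exactly $1$ in case (b); these excesses precisely match the codimensions of the new tangency conditions $P_{e_{1}}$ respectively $L_{e_{1}}$ attached to $C_{1}$ by Construction \ref{constr:cutting_along_elevators}.

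In case (a), I Laplace-expand $\det M(C)$ along the two excess side-$1$ columns together with the $\ell_{e}$-column. The only nonzero contribution comes from the unique $3\times 3$ minor whose three rows are the $(x,y,z)$-rows of the point condition that receives the two-unit flow in the condition flow of Proposition \ref{prop:condition_flow_to_trop_curve}, with the two horizontal coordinates supplied by the excess side-$1$ columns and the vertical coordinate supplied by $\ell_{e}$. That minor has absolute value $\omega(e)$, and the two remaining cofactors are recognizable as $\det M(C_{1})$ (after the two excess columns are reinterpreted as the two rows of the new tangency $P_{e_{1}}$, which are linearly equivalent to them) and $\det M(C_{2})$, proving (a).

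In case (b), only one excess column remains on each side, so the analogous expansion produces a genuine $2\times 2$ cross-determinant rather than a single minor. Writing $(a_{i},b_{i})$ for the primitive horizontal direction in which $v_{i}$ is still free to move under the $C_{i}$-conditions alone (i.e. the primitive direction of the tangency $L_{e_{i}}$ produced by Construction \ref{constr:cutting_along_elevators}), this cross-determinant equals $\pm(a_{1}b_{2}-a_{2}b_{1})$ and picks up a factor $\omega(e)$ from $\ell_{e}$ in the vertical direction. Identifying $(a_{i},b_{i})$ with the degenerated evaluation multiplicities is the key step: replacing $L_{e_{i}}$ by $L_{10}$ or by $L_{01}$ from Definition \ref{def:degenerated_trop_lines} alters only the row $\partial\ev_{e_{i}}$ of $M(C_{i})$, and by linearity of the determinant in that row one obtains $\mult(C_{i,10})=|b_{i}|\cdot D_{i}$ and $\mult(C_{i,01})=|a_{i}|\cdot D_{i}$ for a common ``background'' determinant $D_{i}$ depending only on the remaining rows and columns of $M(C_{i})$. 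Substituting these identifications into the cross-term gives exactly $\omega(e)\cdot|\mult(C_{1,10})\mult(C_{2,01})-\mult(C_{1,01})\mult(C_{2,10})|$. The main obstacle is precisely this last linear-algebra bookkeeping in case (b), namely verifying that the background factors $D_{1},D_{2}$ are literally common to the two degenerated choices (so they factor cleanly out of the $2\times 2$ Plücker-style expansion) and that the elevator weight $\omega(e)$ combines with the weights $\omega(e_{i})$ inherited by each side in the way required by the claimed formula.
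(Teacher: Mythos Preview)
Your reduction to $\mult_{\ev}$ and the block description of $M(C)$ are correct. However, both parts have real gaps, and the mechanism you describe for (a) is not right.

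In (a), there is no ``unique $3\times 3$ minor on the $(x,y,z)$-rows of the point condition receiving the two-unit flow''; no single point condition plays that role. The actual mechanism (which you also need in (b)) is this: in the generalized Laplace expansion along the row split $R_1\mid R_2$, any side-$1$ column that gets paired with $R_2$ must be a basepoint column, since every $C_1$-edge column vanishes identically on $R_2$; and among the three basepoint columns the $z$-column is redundant with the $l_e$-column on $R_2$, because the elevator direction is $(0,0,\pm 1)$ of weight $\omega(e)$. This proportionality is what forces a single surviving term in (a) and exactly two surviving terms (the $x$- and $y$-basepoint columns) in (b), and it is where the factor $\omega(e)$ originates. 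You never isolate it. In (b) you additionally write $\mult(C_{i,st})=|b_i|D_i$ etc.\ and then substitute into a difference; but $\bigl|\,|a_1b_2|-|b_1a_2|\,\bigr|\neq|a_1b_2-b_1a_2|$ in general, so one must carry signed determinants $\det M(C_{i,st})$ throughout and take the absolute value only once at the end. Your closing sentence flags the last issue correctly: that the background factor $D_i$ is genuinely common to both choices $st\in\{10,01\}$ holds because those two choices alter exactly one row of $M(C_i)$ (the $\partial\ev_{e_i}$-row becomes the $x$- or the $y$-projection), so the cofactor along that row is shared --- but this needs to be stated and used, not assumed.

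The paper's route for (b) is rather different from yours: it recursively Laplace-expands every column associated to a bounded edge of $C_1$, groups the residual terms according to which of the three degenerated lines $L_{10},L_{01},L_{1\text{-}1}$ appears, obtains coefficients $F_{10},F_{01},F_{1\text{-}1}$, and then solves the linear system $\det A_{10}=F_{01}-F_{1\text{-}1}$, $\det A_{01}=-F_{10}-F_{1\text{-}1}$, $\det A_{1\text{-}1}=F_{10}+F_{01}$ together with the relation $-\det M_{10}+\det M_{01}+\det M_{1\text{-}1}=0$. Your kernel/cofactor approach, once the gaps above are filled, would be shorter and avoids the third degenerated line $L_{1\text{-}1}$ altogether; the paper's argument is more mechanical but does not rely on interpreting cofactor vectors as kernel directions.
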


\begin{proof}
It is sufficient to prove Lemma \ref{lemma:weights_cutting_elevators} for $\ev$-multiplicities since the cross-ratio multiplicities can be expressed locally at vertices (see Proposition \ref{prop:zsfssg_int_theory}). Thus contributions from vertices to cross-ratio multiplicities do not depend on cutting edges.
\begin{itemize}
\item[(a)]
The proof of part (a) is basically the same as the one of part (a) of Proposition 3.4 of \cite{GeneralKontsevich}, and can easily be adapted to this situation.

\item[(b)]
The proof of part (b) follows ideas of \cite{GeneralKontsevich}.

We assume that the weights of each multi line $\omega(L_k)$ (see Definition \ref{def:tropical_line}) for $k\in\underline{\kappa}^\alpha\cup\underline{\kappa}^\alpha$ equals $1$ since we can pull out the factor $\omega(L_k)$ frome each row of the $ev$-matrix, apply all the following arguments and multiply with $\omega(L_k)$ later.

We use notation from Construction \ref{constr:cutting_along_elevators}, i.e. we denote the vertex of $C_1$ adjacent to the cut edge $e$ by $v_1$ and the other vertex adjacent to $e$ by $v_2$. The ev-matrix $M(C)$ of $C$ with respect to the base point $v_1$ is given by
\begin{align*}
M(C)=
\begin{array}{cc ccc|cccc !{\color{red!70!black}\vline width 1.5pt} c| cccc|cc}
  && \multicolumn{3}{c}{\footnotesize \textrm{Base $v_1$}}  & \multicolumn{5}{c}{\footnotesize \textrm{lengths in $C_1$}}&\multicolumn{4}{c}{\footnotesize \textrm{lengths in $C_2$}}& \footnotesize \textrm{$l_e$} &\\
\footnotesize\textrm{conditions in $C_1$} &\ldelim({6}{0.5em}& \multicolumn{3}{c|}{\multirow{3}*{*}}  & \multicolumn{4}{c!{\color{red!70!black}\vline width 1.5pt}}{\multirow{3}*{*}}&* & \multicolumn{4}{c|}{\multirow{3}*{0}} &0
  & \rdelim){6}{0.5em} \\
  & &&& &&&& &\vdots&&&& &\vdots&\\
  & &&& &&&& &*&&&& &0&\\
  \arrayrulecolor{red!70!black}\Cline{1.5pt}{2-16}\arrayrulecolor{black} \footnotesize\textrm{conditions in $C_2$} && \multicolumn{3}{c|}{\multirow{3}*{*}} & \multicolumn{4}{c!{\color{red!70!black}\vline width 1.5pt}}{\multirow{3}*{0}}& 0 & \multicolumn{4}{c|}{\multirow{3}*{*}} &*
  &  \\
  & &&& &&&& &\vdots&&&& &\vdots&\\
  & &&& &&&& &0&&&& &*&\\
\end{array}
\end{align*}
The bold red lines divide $M(C)$ into square pieces at the upper left and the lower right. This follows from similar arguments used in the proof of part (a). Let $M$ be the matrix consisting of the lower right block of $M(C)$ whose entries (see above) are indicated by $*$ and its columns are associated to lengths in $C_2$.
Let $A=(a_{ij})_{ij}$ be the submatrix of $M(C)$ given by the rows that belong to conditions of $C_1$ and by the base point's columns and the columns that are associated to lengths in $C_1$, i.e. $A$ consists of all the $*$-entries above the bold red line in $M(C)$.

Consider the Laplace expansion of the rightmost column of $A$. Recursively, use Laplace expansion on every column that belongs to the lengths in $C_1$ starting with the rightmost column.
Eventually, we end up with a sum in which each summand contains a factor $\det (N_i)$ for a matrix $N_i$, which is one of the following two matrices, namely
\begin{align*}
N_1=
\begin{array}{c cc!{\color{red!70!black}\vline width 1.5pt} c |c|ccc c}
  & \multicolumn{3}{c}{\footnotesize \textrm{Base $v_1$}} & \multicolumn{1}{c}{\footnotesize \textrm{$l_e$}} &&&&\\
  \ldelim({6}{0.5em} &*&*&0& 0& \multicolumn{3}{c}{\multirow{2}*{0}} & \rdelim){6}{0.5em} \\
  &*&*&0&0&&&&\\
  \arrayrulecolor{red!70!black}\Cline{1.5pt}{1-9}\arrayrulecolor{black} &  \multicolumn{2}{c!{\color{red!70!black}\vline width 1.5pt}}{\multirow{4}*{*}}& * & * & \multicolumn{3}{c}{\multirow{4}*{$M$}} & \\
&&&\vdots&\vdots &&&&\\
&&&\vdots&\vdots &&&&\\
&&&*&*&&&\\
\end{array} \textrm{\quad and\quad }
N_2=
\begin{array}{c cc!{\color{red!70!black}\vline width 1.5pt} c |c|ccc c}
  & \multicolumn{3}{c}{\footnotesize \textrm{Base $v_1$}} & \multicolumn{1}{c}{\footnotesize \textrm{$l_e$}} &&&&\\
  \ldelim({6}{0.5em} &*&*&b_1& 0& \multicolumn{3}{c}{\multirow{2}*{0}} & \rdelim){6}{0.5em} \\
  &*&*&b_2&0&&&&\\
  \arrayrulecolor{red!70!black}\Cline{1.5pt}{1-9}\arrayrulecolor{black} &  \multicolumn{2}{c!{\color{red!70!black}\vline width 1.5pt}}{\multirow{4}*{*}}& * & * & \multicolumn{3}{c}{\multirow{4}*{$M$}} & \\
&&&\vdots&\vdots &&&&\\
&&&\vdots&\vdots &&&&\\
&&&*&*&&&\\
\end{array}
\end{align*}
Since the $l_e$ column of $N_1$ equals $\omega(e)$ times the third column of $N_1$, the determinant $\det(N_1)$ is zero and thus does not occur in the Laplace expansion from above. In case of matrix $N_2$, at least one of the entries $b_1,b_2$ is $1$. Moreover, if $b_1$ or $b_2$ equals $1$, then this $1$ is the only nonzero entry in the whole row. Thus Laplace expanding this row and dividing the $l_e$ column by $\omega(e)$ to obtain the column $\tilde{l}_e$ (which gives the global factor of $\omega(e)$ in part (b) of Lemma \ref{lemma:weights_cutting_elevators}) yields the following $3$ cases.
\begin{align*}
M_{a_{r1}a_{r2}}:=
\begin{array}{c ccc|ccc c}
  & & & \multicolumn{1}{c}{\footnotesize \textrm{$\tilde{l}_e$}} &\multicolumn{3}{c}{\footnotesize \textrm{lenghts in $C_2$}}&\\
  \ldelim({6}{0.5em} &a_{r1}&a_{r2}&0&0&\dots&0& \rdelim){6}{0.5em}\\
   \cline{1-8}  &\multicolumn{3}{c|}{\multirow{5}*{$*$}}&\multicolumn{3}{c}{\multirow{5}*{$M$}} & \\
  &&&&&&&\\
  &&&&&&&\\
  &&&&&&&\\
  &&&&&&&\\
\end{array},
\end{align*}
where $(a_{r1},a_{r2})=(1,0)$, $(a_{r1},a_{r2})=(0,1)$ or $(a_{r1},a_{r2})=(1,-1)$ are the remaining entries of $A$ in its $r$-th row after the recursive procedure.
Notice that in each case the entries of the first $3$ columns are of such a from that $M_{st}$ for $st=10,01,1\text{-}1$ is the ev-matrix of $C_{2,st}$ (see Notation \ref{notation:replacing_tangency_conditions_1/1-edge}) with base point $v_2$.

We can group the summands according to the values $a_{r1},a_{r2}$ and obtain in total
\begin{align}\label{eq:proof5_multiplicities_of_1/1_and_2/0}
|\det(M(C))|=\omega(e)\cdot|F_{10}\cdot\det(M_{10})+F_{01}\cdot\det(M_{01})+F_{1\text{-}1}\cdot\det(M_{1\text{-}1})|,
\end{align}
where $F_{st}\in\mathbb{R}$ for $st=10,01,1\text{-}1$ are factors occuring due to the recursive Laplace expansion. 
More precisely, let $b'$ be the number of bounded edges in $C_1$ and define $b:=b'+1$, i.e. $b$ is the total number of Laplace expansions we applied. Then
\begin{align}\label{eq:proof_multiplicities_of_1/1_and_2/0_F_st}
F_{st}=\sum_{r:(a_{r1},a_{r2})=(s,t)}\sum_{\sigma} \sgn(\sigma) \prod_{j=3}^{3+b} a_{\sigma(j)j},
\end{align}
where the second sum goes over all bijections $\sigma:\lbrace 3,\dots,3+b \rbrace\to \lbrace 1,\dots,r-1,r+1,\dots,b+1\rbrace$, i.e. it goes over all possibilities of choosing for each column Laplace expansion was used on an entry in a row of $A$ which is not the $r$-th row.

Let $A_{10},A_{01},A_{1\text{-}1}$ be the square matrices obtained from $A$ by adding the new first row $(1,0,0,\dots,0)$, $(0,1,0\dots,0)$ or $(1,-1,0,\dots,0)$ to $A$. Again, notice that $A_{st}$ for $st=10,01,1\text{-}1$ is the ev-matrix of $C_{1,st}$ (see Notation \ref{notation:replacing_tangency_conditions_1/1-edge}, Definition \ref{def:ev_matrix}) with base point $v_1$.

We claim that
\begin{align}\label{eq:proof1_multiplicities_of_1/1_and_2/0}
\det(A_{10})=F_{01}-F_{1\text{-}1}
\end{align}
holds. Let $N$ be the number of columns and rows of $A_{st}$. Denote the entries of the $\ev$-matrix $M(C)$ by $m(C)_{ij}$. Define
\begin{align*}
S_{st}:=\lbrace r\in [N-1] \mid m(C)_{r1}=s,\; m(C)_{r2}=t \rbrace
\end{align*}
for $(s,t)=(1,0),(0,1),(1,-1)$ and notice that $\#S_{10}+\#S_{01}+\#S_{1\textrm{-}1}=N-1$. Denote the entries of $A_{10}$ by $a^{(10)}_{ij}$ and apply Leibniz' determinant formula to obtain
\begin{align*}
\det(A_{10})&=\sum_{\sigma\in\mathds{S}_N}\sgn(\sigma)\prod_{j=1}^N a^{(10)}_{\sigma(j)j}\\
&=\sum_{\substack{\sigma\in\mathds{S}_N\\ \sigma(2)\in S_{01}}}\sgn(\sigma)\prod_{j=1}^N a^{(10)}_{\sigma(j)j}
+
\sum_{\substack{\sigma\in\mathds{S}_N\\ \sigma(2)\in S_{1\textrm{-1}}}}\sgn(\sigma)\prod_{j=1}^N a^{(10)}_{\sigma(j)j}
=F_{01}-F_{1\text{-}1},
\end{align*}
where the second equality holds by definition of $S_{st}$ and the third equality holds by considering how contributions of $F_{01}$ and $F_{1\text{-}1}$ arise as choices of entries of $A$, see \eqref{eq:proof_multiplicities_of_1/1_and_2/0_F_st}. The minus sign comes from the factor $a^{(10)}_{\sigma(2),2}=-1$ in each product in the last sum. Thus \eqref{eq:proof1_multiplicities_of_1/1_and_2/0} holds.

We can show in a similar way that
\begin{align}
\det(A_{01})&=-\left(F_{10}+F_{1\text{-}1}\right)=-F_{10}-F_{1\text{-}1},\label{eq:proof2_multiplicities_of_1/1_and_2/0} \\
\det(A_{1\text{-}1})&=F_{10}+F_{1\text{-}1}+F_{01}-F_{1\text{-}1}=F_{10}+F_{01}\label{eq:proof3_multiplicities_of_1/1_and_2/0}
\end{align}
hold. Solving the system of linear equations \eqref{eq:proof1_multiplicities_of_1/1_and_2/0}, \eqref{eq:proof2_multiplicities_of_1/1_and_2/0}, \eqref{eq:proof3_multiplicities_of_1/1_and_2/0} for $F_{10},F_{01},F_{1\text{-}1}$ yields
\begin{align}\label{eq:proof4_multiplicities_of_1/1_and_2/0}
\left(\begin{array}{c}
F_{10}\\
F_{01}\\
F_{1\text{-}1}
\end{array}\right)\in
\left(\begin{array}{c}
-\det(A_{01})\\
\det(A_{10})\\
0
\end{array}\right)+
\langle\left(\begin{array}{c}
-1\\
1\\
1
\end{array}\right)\rangle,
\end{align}
where the $1$-dimensional part appears because of the relation
\begin{align*}
-\det(M_{10})+\det(M_{01})+\det(M_{1\text{-}1})=0.
\end{align*}
Combining \eqref{eq:proof5_multiplicities_of_1/1_and_2/0} with \eqref{eq:proof4_multiplicities_of_1/1_and_2/0} proves part (b) of Lemma \ref{lemma:weights_cutting_elevators}, where $A_{st}=C_{1,st}$ and $M_{st}=C_{2,st}$.
\end{itemize}
\end{proof}


Lemma \ref{lemma:weights_cutting_elevators} gives rise to a graphical interpretation of $\mult(C)$ if $C$ is floor decomposed. For that, we want to iteratively use part (b) of Lemma \ref{lemma:weights_cutting_elevators} with the following notation.

\begin{notation}[Iterating Notation \ref{notation:replacing_tangency_conditions_1/1-edge}]\label{notation:replacing_tangency_conditions_1/1-edge_iterated}
Let $C$ denote a floor decomposed tropical stable map as in Construction \ref{constr:cutting_along_elevators} and let $C_i$ denote a floor of $C$. The collection of labels of ends arising from cutting $1/1$ elevators adjacent to $C_i$ whose primitive direction is $(0,0,-1)\in\mathbb{R}^3$ (resp. $(0,0,1)$) is denoted by $\underline{{1/1}_i}^\alpha$ (resp. $\underline{{1/1}_i}^\beta$)
Let $L_{\underline{{1/1}_i}^\alpha}$ (resp. $L_{\underline{{1/1}_i}^\beta}$) denote the tangency conditions, that arose from cutting the $1/1$ elevators, and that ends of $C_i$ satisfy. 
Then let $C_{i,st_{\underline{{1/1}_i}^\alpha};st_{\underline{{1/1}_i}^\beta}}$ denote the floor $C_i$ where the tangency condition $L_k$ is replaced by a tangency condition $L_{st_{k}}$ that is a degenerated line for $k\in \underline{{1/1}_i}^\alpha \cup \underline{{1/1}_i}^\beta$ as in Notation \ref{notation:replacing_tangency_conditions_1/1-edge}.
\end{notation}

\begin{definition}[Graphical contribution]\label{def:graphical_contribution}
Let $C$ be a floor decomposed tropical stable map contributing to $N_{\Delta_d^3(\alpha,\beta)}\left(p_{[n]},L_{\underline{\kappa}^\alpha},L_{\underline{\kappa}^\beta},P_{\underline{\eta}^\alpha},P_{\underline{\eta}^\beta},\lambda_{[l]} \right)$ and let $\Gamma_C$ denote its floor graph, see Definition \ref{def:floor_graph}.
Cut the edges of $\Gamma_C$ the following way: If $e$ is a $2/0$ edge of $\Gamma_C$, just cut it. If $e$ is a $1/1$ edge of $\Gamma_C$, cut it and attach a small horizontal line segment to one of the loose ends and a small vertical line segment to the other loose end. Cutting all edges of $\Gamma_C$ this way gives a decorated graph, called \textit{graphical contribution} to $\mult(C)$. 

The \textit{multiplicity} of a graphical contribution is defined the following way. Given a graphical contribution $G$ to $\mult(C)$, we draw its vertices on a line in the plane such that vertices corresponding to points with smaller $z$-coordinate are more to the left than vertices corresponding to points with greater $z$-coordinates. Let $u$ be the number of cut $1/1$ edges of $G$ where the loose end with a horizontal line segment is attached to the left of the two vertices adjacent to this $1/1$ edge. Define the \textit{multiplicity} of the graphical contribution $G$ to $C$ as
\begin{align*}
\mult(G):=(-1)^{u}\cdot \prod_i \mult\left( C_{i,st_{\underline{{1/1}_i}^\alpha};st_{\underline{{1/1}_i}^\beta}} \right) \cdot \prod_e \omega(e),
\end{align*}
where the first product goes over all floors of $C$ (for notation, see Construction \ref{constr:cutting_along_elevators}). Here, every codimension two tangency condition was replaced by some degenerated line condition the following way (see also Notation \ref{notation:replacing_tangency_conditions_1/1-edge_iterated}): if $e$ is a $1/1$ elevator that used to connect the curves $C_i$ and $C_j$ (that we obtained from cutting $C$), and the loose end $e_i$ of $e$ (that is obtained by cutting $e$) that is adjacent to $C_i$ is equipped with a vertical (resp. horizontal) line segment, then replace the codimension two tangency condition $L_{e_i}$ that $e_i$ satisfies with the degenerated line condition $L_{10}$ (resp. $L_{01}$). More precisely (see Figure \ref{Figure14}), the line segments in the graphical contribution represent the degenerated tropical line conditions, i.e. vertical (resp. horizontal) segments represent vertical (resp. horizontal) degenerated tropical lines. The second product goes over all edges $e$ of $\Gamma_C$ and multiplies their weights $\omega(e)$.
\end{definition}

\begin{example}\label{ex:graphical_contributions}
Let $\Gamma_C$ be the weighted graph shown in Figure \ref{Figure15}, where we suppress the weight of an edge if it is one.

\begin{figure}[H]
\centering
\def\svgwidth{400pt}
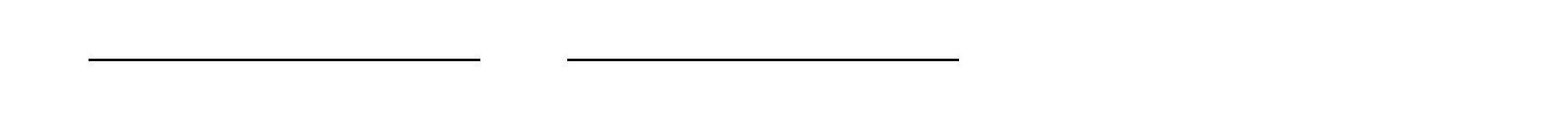
\caption{$\Gamma_C$ with its flow structure and weights.}
\label{Figure15}
\end{figure}

\noindent There are $4$ different graphical contributions that $\Gamma_C$ gives rise to, see Figure \ref{Figure16}. Denote the graphical contributions shown in Figure \ref{Figure16} from top to bottom by $G_1,\dots,G_4$. The multiplicities can be read off as:
\begin{align*}
\mult(G_1)&=(-1)^0\cdot 2\cdot\mult(C_{1,\emptyset;10})\cdot\mult(C_{2,01;10})\cdot\mult(C_{3,01;\emptyset})\cdot \mult(C_{4})\\
\mult(G_2)&=(-1)^1\cdot2\cdot\mult(C_{1,\emptyset;10})\cdot\mult(C_{2,01;01})\cdot\mult(C_{3,10;\emptyset})\cdot \mult(C_{4})\\
\mult(G_3)&=(-1)^1\cdot2\cdot\mult(C_{1,\emptyset;01})\cdot\mult(C_{2,10;10})\cdot\mult(C_{3,01;\emptyset})\cdot \mult(C_{4})\\
\mult(G_4)&=(-1)^2\cdot 2\cdot\mult(C_{1,\emptyset;01})\cdot\mult(C_{2,10;01})\cdot\mult(C_{3,10;\emptyset})\cdot \mult(C_{4})
\end{align*}

\begin{figure}[H]
\centering
\def\svgwidth{400pt}
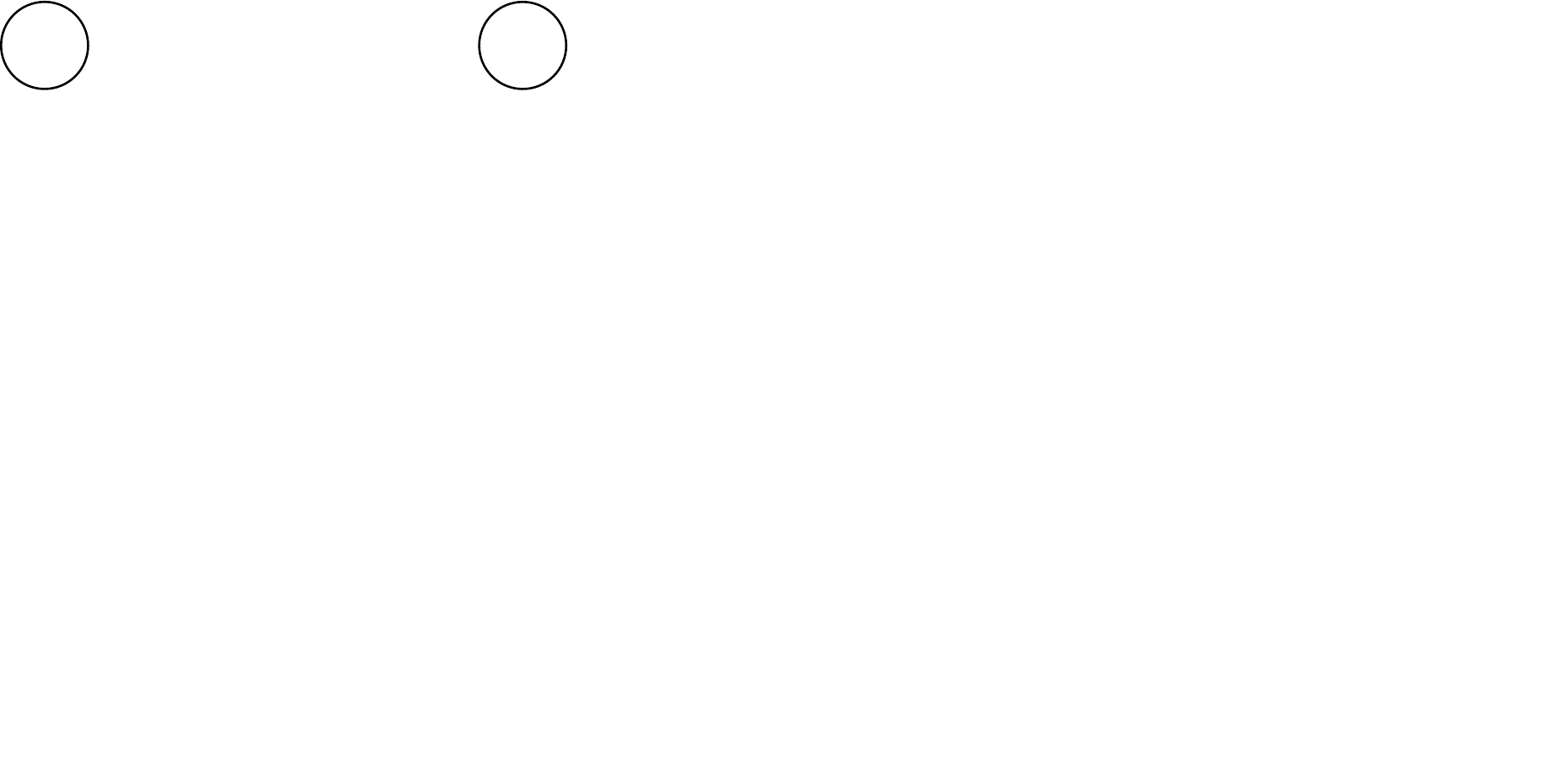
\caption{All graphical contributions associated to $\Gamma_C$, where the vertical and horizontal line segments and configurations of them that contribute to $u$ from Definition \ref{def:graphical_contribution} are indicated by $-1$.}
\label{Figure16}
\end{figure}

\end{example}

\begin{proposition}\label{prop:graphical_contributions}
Let $C$ be a floor decomposed tropical stable map that contributes to the number $N_{\Delta_d^3(\alpha,\beta)}\left(p_{[n]},L_{\underline{\kappa}^\alpha},L_{\underline{\kappa}^\beta},P_{\underline{\eta}^\alpha},P_{\underline{\eta}^\beta},\lambda_{[l]} \right)$. Then
\begin{align*}
\mult(C)=|\sum_G \mult(G)|,
\end{align*}
where the sum goes over all graphical contributions to $C$, see Definition \ref{def:graphical_contribution}.
\end{proposition}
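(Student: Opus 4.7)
The plan is to prove the formula by induction on the number $k$ of elevators of $C$, using Lemma \ref{lemma:weights_cutting_elevators} to peel off one elevator at each step. The base case $k=0$ is immediate: $C$ is a single floor, $\Gamma_C$ has one vertex with no edges, and the unique graphical contribution $G$ has $u=0$ and no edge-weight factors, so $\mult(G) = \mult(C)$ by definition.

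For the inductive step, I would fix an elevator $e$ of $C$ and cut along it via Construction \ref{constr:cutting_along_elevators}. Every graphical contribution $G$ of $C$ restricts uniquely to a pair $(G_1, G_2)$ of graphical contributions on $C_1, C_2$ (together with a decoration choice at $e$ in the $1/1$ case), and the sign $(-1)^u$ and weight factor $\prod_e \omega(e)$ from Definition \ref{def:graphical_contribution} decompose multiplicatively across the cut. In the $2/0$ case, the restriction is a bijection with $\mult(G) = \omega(e)\mult(G_1)\mult(G_2)$, so $\sum_G \mult(G) = \omega(e)\bigl(\sum_{G_1}\mult(G_1)\bigr)\bigl(\sum_{G_2}\mult(G_2)\bigr)$; combining the inductive hypothesis on $C_1, C_2$ with Lemma \ref{lemma:weights_cutting_elevators}(a) gives $|\sum_G \mult(G)| = \omega(e)\mult(C_1)\mult(C_2) = \mult(C)$. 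In the $1/1$ case, the two decoration choices at $e$ correspond precisely to the pairs $(C_{1,10}, C_{2,01})$ and $(C_{1,01}, C_{2,10})$ in Lemma \ref{lemma:weights_cutting_elevators}(b), with the latter carrying an extra $-1$ from the convention that horizontal-on-left increases $u$, so
\[
\sum_G \mult(G) = \omega(e)\bigl(S_{1,10}S_{2,01} - S_{1,01}S_{2,10}\bigr),
\]
where $S_{i,st} := \sum_{G_{i,st}} \mult(G_{i,st})$.

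The main obstacle lies in the $1/1$ case: absolute values do not distribute over the difference above, so the hypotheses $|S_{i,st}| = \mult(C_{i,st})$ combined with Lemma \ref{lemma:weights_cutting_elevators}(b) do not immediately yield $|\sum_G \mult(G)| = \mult(C)$. To overcome this, I would strengthen the induction and carry along the signed determinantal identity that is already established inside the proof of Lemma \ref{lemma:weights_cutting_elevators}(b), namely
\[
\det M(C) = \omega(e)\bigl(\det M(C_{1,10})\det M(C_{2,01}) - \det M(C_{1,01})\det M(C_{2,10})\bigr),
\]
together with the analogous signed product identity for $2/0$ cuts. Iterating these identities across all $k$ elevators produces a single signed expansion $\det M(C) = \pm \prod_e \omega(e) \cdot \sum_G (-1)^{u(G)} \prod_i \det M(C_{i,\ldots})$. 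Multiplying by the positive, local factor $\prod_v \mult_{\CR}(v)$ from Proposition \ref{prop:zsfssg_int_theory} and taking absolute values then yields $\mult(C) = |\sum_G \mult(G)|$, provided the signs of $\det M(C_{i,\ldots})$ combine consistently across different graphical contributions $G$. This last sign-rigidity is itself the content of Lemma \ref{lemma:weights_cutting_elevators}(b) applied at each level of the iteration (since the lemma equates $|\det M(C_{1,10})\det M(C_{2,01}) - \det M(C_{1,01})\det M(C_{2,10})|$ with $||\det M(C_{1,10})||\det M(C_{2,01})| - |\det M(C_{1,01})||\det M(C_{2,10})||$, forcing the two products to share a sign), and propagates through the induction.
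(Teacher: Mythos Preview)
Your approach is exactly the paper's: its proof reads in full ``Iterate Lemma \ref{lemma:weights_cutting_elevators}, where, in part (b) of Lemma \ref{lemma:weights_cutting_elevators}, we use the convention that $C_1$ lies on the left of $C_2$ with respect to the order used in Definition \ref{def:graphical_contribution}.'' You have simply unpacked that one sentence into an explicit induction, and along the way you isolate a sign subtlety the paper does not mention.

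Your fix --- iterate the \emph{signed} identity
\[
\det M(C)=\pm\,\omega(e)\bigl(\det M(C_{1,10})\det M(C_{2,01})-\det M(C_{1,01})\det M(C_{2,10})\bigr)
\]
extracted from inside the proof of Lemma \ref{lemma:weights_cutting_elevators}(b), and take the absolute value only at the end --- is the right move and is what the paper's one-line proof implicitly intends.

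The only weak spot is your final paragraph. To reconcile the iterated signed expansion with the definition of $\mult(G)$ (which uses $\mult(C_{i,\ldots})=|\det M(C_{i,\ldots})|\cdot\prod\mult_{\CR}$), you need the sign of $\prod_i\det M(C_{i,G})$ to be independent of $G$. You deduce this by invoking the \emph{statement} of Lemma \ref{lemma:weights_cutting_elevators}(b) (with absolute values) alongside the signed identity from its \emph{proof}. But the proof of that lemma only establishes the signed version; the absolute-value form is the lemma's assertion, not something derived separately. So this step is circular. The paper does not address this point either, so your argument is already at least as complete as the published proof; if you want it airtight, fix once and for all a base point and row ordering for each floor so that all the $\det M(C_{i,\ldots})$ are computed with a consistent orientation, and carry those signs through the induction rather than appealing back to the lemma's statement.
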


\begin{proof}
Iterate Lemma \ref{lemma:weights_cutting_elevators}, where, in part (b) of Lemma \ref{lemma:weights_cutting_elevators}, we use the convention that $C_1$ lies on the left of $C_2$ with respect to the order used in Definition \ref{def:graphical_contribution}.
\end{proof}

\subsection{Pushing forward conditions along elevators}
The aim of this subsection is to prove the following proposition, which determines how the $1$-dimensional conditions a floor decomposed tropical stable map exchanges via its $1/1$ elevators look like. More precisely, cutting a $1/1$ elevator $q$ adjacent to the floors $C_i,C_j$ leads to loose edges that can move in a $1$-dimensional way, i.e. the floor $C_i$ adjacent to $q$ gives rise to a $1$-dimensional cycle $Y_{i,q}$ that can be pushed forward to $\mathbb{R}^2$ using $\partial\ev_{q}$. The cycle $\partial\ev_{q,*}(Y_{i,q})$ is the $1$-dimensional restriction $C_i$ imposes on $C_j$ via the elevator $q$.

\begin{proposition}\label{prop:push-forward}
For notation, see Notation \ref{notation:underlined_symbols}, \ref{def:general_pos}, \ref{notation:adapt_CRs_to_multiple_cuts} and Construction \ref{constr:cutting_along_elevators}. Let $C_i$ be a floor of a floor decomposed tropical stable map $C\in\mathcal{M}_{0,n}\left(\mathbb{R}^3,\Delta_d^3(\alpha,\beta)\right)$ which satisfies general positioned conditions $p_{[n]},L_{\underline{\kappa}^\alpha},L_{\underline{\kappa}^\beta},P_{\underline{\eta}^\alpha},P_{\underline{\eta}^\beta},\lambda_{[l]}$. Let $\Delta^3_{s_i}\left( \alpha^{i},\beta^{i} \right)$ be the degree of $C_i$ and let $q\in\Delta^3_{s_i}\left( \alpha^{i},\beta^{i} \right)$ be the label of an end whose primitive direction is $(0,0,\pm 1)$. The cycle
\begin{align*}
Y_{i,q}
:=
\prod_{k\in \underline{\kappa_{i}}^{\alpha}\cup \underline{\kappa_{i}}^{\beta}} \partial\ev_k^{*}(L_k)& 
\cdot
\prod_{f\in \underline{\eta_{i}}^{\alpha}\cup \underline{\eta_{i}}^{\beta}} \partial\ev_f^{*}(P_f)
\cdot
\prod_{j\in \underline{l_i}} \ft_{\lambda_j^{\to}}^*\left( 0\right)
\cdot
\ev_i^*\left( p_i\right)
\cdot
\mathcal{M}_{0,1}\left(\mathbb{R}^3, \Delta^3_{s_i}\left( \alpha^{i},\beta^{i} \right)\right)
\end{align*}
has the following properties.
\begin{itemize}
\item[(1)] The recession fan of the push-forward $\partial\ev_{q,*}(Y_{i,q})$ does only contain ends of standard directions.
\item[(2)] Each unbounded cell $\sigma\in Y_{i,q}$ that is mapped to an end of the recession fan of $\partial\ev_{q,*}(Y_{i,q})$ under the push-forward $\partial\ev_{q,*}$ satisfies the following: If $C_\sigma\in\sigma$ is a tropical stable map in the interior of $\sigma$, then $q$ is adjacent to a $3$-valent vertex, which is adjacent to another end $E\neq q$ such that $\pi(E)\subset \mathbb{R}^2$ is an end of standard direction.
\end{itemize}
\end{proposition}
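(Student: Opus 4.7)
The plan is to analyze the one-parameter family of tropical stable maps parametrized by $Y_{i,q}$ via the string technique familiar from plane curves (as in \cite{GeneralKontsevich}), adapted to $\mathbb{R}^3$ by projecting along the elevator direction. Since for a $1/1$-elevator cut the tangency condition on $q$ is codimension one in $\mathcal{M}_{0,1}(\mathbb{R}^3, \Delta^3_{s_i}(\alpha^i, \beta^i))$, omitting it from the intersection product (as $Y_{i,q}$ does) raises the expected dimension by one, so the dimension count \eqref{eq:general_dimension_count} yields $\dim Y_{i,q}=1$. Consequently $\partial\ev_{q,*}(Y_{i,q})$ is a tropical one-cycle in $\mathbb{R}^2$, and its recession fan is what we want to control.

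For a maximal (one-dimensional) cone $\sigma \subset Y_{i,q}$, the combinatorial type of tropical stable maps in $\sigma$ is constant, and the one-parameter movement is carried by a distinguished subgraph, the \emph{string}, whose edge lengths vary while all remaining conditions (the point $p_i$, the other tangencies, and each cross-ratio at its prescribed vertex) are preserved. Using condition flows (Construction \ref{constr:condition_flow_to_trop_curve}), the path criterion for cross-ratios (Remark \ref{remark:path_criterion}), and Corollary \ref{cor:CR_pfade_ueber_alle_edges_an_vertex}, one argues that the string is a path from $q$ to another end $E$ of the floor, where $E$ is either a non-contracted end of direction $e_0$, $-e_1$, or $-e_2$ (which carries no tangency condition in $Y_{i,q}$) or an end of direction $\pm e_3$ carrying an $L_k$-type tangency along which it can slide. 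In either case the admissible infinitesimal movement of $\partial\ev_q$ in $\mathbb{R}^2$ lies along a standard direction: directly via $\pi(e_0)=(1,1)$, $\pi(-e_1)=(-1,0)$, $\pi(-e_2)=(0,-1)$ in the first case, and via the standard-direction edges of the tropical multi line $L_k$ in the second.

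For an unbounded $\sigma$ whose image ray under $\partial\ev_{q,*}$ is an end of the recession fan of $\partial\ev_{q,*}(Y_{i,q})$, I would identify the combinatorial configuration in the interior of $\sigma$ producing this recession. A length-to-infinity limit within $\sigma$ corresponds to stretching the bounded edge $e$ of the string adjacent to the vertex $V$ of $q$; in order for $\partial\ev_q$ to vary unboundedly along $\sigma$ in a pure direction, $V$ must be three-valent with neighbors $q$, $e$, and an end $E$ whose $\pi$-direction is standard. The balancing condition at $V$ then gives $v(q)+v(E)+v(e)=0$ in $\mathbb{R}^3$, and projecting via $\pi$ and using $\pi(v(q))=0$ yields $\pi(v(e))=-\pi(v(E))$, so the push-forward direction of $\sigma$ equals $-\pi(v(E))$, which by the previous paragraph is a standard direction of $\mathbb{R}^2$. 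This simultaneously establishes (1) and the combinatorial structure asserted in (2).

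The main obstacle is the careful classification of the combinatorial types of $\sigma$ that actually contribute to the recession fan, especially the case where $E$ is of direction $\pm e_3$ and slides along some $L_k$: here the string passes through extra structure near $E$, and one must check that the vertex adjacent to $q$ is still three-valent with some end of standard direction in its star (rather than the vertical $E$ itself). This case analysis is essentially the three-dimensional analogue of the plane-curve string argument of \cite{GeneralKontsevich}, and verifying it in the present setting with condition flows, tangency conditions, and cross-ratios is the heart of the proof.
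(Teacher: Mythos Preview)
Your approach is a genuinely different route from the paper's. You analyze the one-dimensional cycle $Y_{i,q}$ directly via string movement, attempting to classify the unbounded cells by tracking the path of varying edge lengths from $q$ to some other end. The paper instead proceeds intersection-theoretically: it fixes a degenerated test line $L_{10}$ (parallel to the $y$-axis), pushes it far in the $(-1,0)$ direction so that $L_{10}$ meets $\partial\ev_{q,*}(Y_{i,q})$ only in ends, and then invokes the projection formula $L_{10}\cdot\partial\ev_{q,*}(Y_{i,q})=\partial\ev_{q,*}\bigl(\partial\ev_q^*(L_{10})\cdot Y_{i,q}\bigr)$. Via the floor projection $\tilde\pi$ the right-hand side becomes a zero-dimensional count of plane curves, and the combinatorial analysis reduces to ruling out vertices of minimal $x$-coordinate not adjacent to $q$ (by exhibiting a forbidden one-dimensional movement in each case, using Corollary~\ref{cor:CR_pfade_ueber_alle_edges_an_vertex}). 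Repeating with $L_{10}$ pushed to the right and with $L_{01}$ pushed up and down covers all recession directions and simultaneously yields the three-valence statement in (2). The paper even remarks that your string-based route (following Proposition~2.1 and Corollary~2.31 of \cite{GeneralKontsevich}) would also work, but chose the test-curve method precisely to avoid importing that machinery.

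Your sketch, as written, has a real gap at the point you yourself flag. The assertion that for an unbounded $\sigma$ the vertex $V$ adjacent to $q$ ``must be three-valent with neighbors $q$, $e$, and an end $E$ whose $\pi$-direction is standard'' is exactly the content of part~(2); it does not follow from the mere fact that $\partial\ev_q$ varies unboundedly. A priori $V$ could be adjacent to $q$ and two bounded edges whose lengths grow in a fixed ratio, and nothing you have said so far forces an \emph{end} to sit at $V$. Likewise the case where the other endpoint $E$ of the string is a $\pm e_3$ end sliding along some $L_k$ does not immediately give an end of standard $\pi$-direction at $V$; that requires a separate argument. The paper's test-curve trick avoids this classification entirely: once $L_{10}$ is pushed far enough, any contributing curve must have the contracted end $q$ sitting on $L_{10}$, and the vertex-minimality argument then pins down the local picture at $V$ directly. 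If you want to complete your route, you need to carry out the full string classification as in \cite{GeneralKontsevich}, including the higher-valent and $L_k$-sliding cases, rather than deferring it.
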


An immediate consequence of Proposition \ref{prop:push-forward} is the following corollary, which yields that all restrictions exchanged via a $1/1$ elevator are in fact tropical curves with ends of standard direction.

\begin{corollary}\label{cor:cut_1/1_elevators_yield_standard_directions}
Let $C$ be a floor decomposed tropical stable map that contributes to the number $N_{\Delta_d^3(\alpha,\beta)}\left(p_{[n]},L_{\underline{\kappa}^\alpha},L_{\underline{\kappa}^\beta},P_{\underline{\eta}^\alpha},P_{\underline{\eta}^\beta},\lambda_{[l]} \right)$. Then the codimension two tangency conditions each $1/1$ elevator passes on to its neighbors have ends of standard direction only. In particular, we can assume that if we cut all elevators as in Construction \ref{constr:cutting_along_elevators}, then the appearing codimension two tangency conditions have ends of standard directions.
\end{corollary}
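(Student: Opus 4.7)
The plan is that the corollary is essentially a direct translation of part (1) of Proposition \ref{prop:push-forward}, once one identifies the tangency condition passed along an elevator with the push-forward cycle appearing there. Let $e$ be a $1/1$ elevator of $C$ connecting two floors $C_i$ and $C_j$. Cut $e$ as in Construction \ref{constr:cutting_along_elevators}, let $e_i\in C_i$ denote the loose end resulting on the $C_i$-side with label $q$, and let $v_i$ be the vertex adjacent to $e_i$. By the very definition of the tangency condition $L_{e_j}$ that $C_i$ imparts to $C_j$ via $e$ (see Construction \ref{constr:cutting_along_elevators}), $L_{e_j}$ is the projection $\pi$ of the $1$-dimensional family of positions that $v_i$ can occupy once all conditions incident to $C_i$ are imposed. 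This family is precisely the cycle $Y_{i,q}$ of Proposition \ref{prop:push-forward}, and recording the transverse position of $v_i$ is by definition the map $\partial\ev_q$. Thus
\begin{align*}
L_{e_j}=\partial\ev_{q,*}(Y_{i,q}).
\end{align*}

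By part (1) of Proposition \ref{prop:push-forward}, the recession fan $\rec(L_{e_j})$ of $\partial\ev_{q,*}(Y_{i,q})$ contains only ends of standard direction, which is the first assertion. For the second assertion, I would invoke the standard rational equivalence input recalled just after Definition \ref{def:general_pos} and in Remark \ref{remark:recession_fan}: any planar tropical curve whose ends are of standard direction is rationally equivalent to its recession fan, pull-backs of rationally equivalent cycles along $\partial\ev_q$ are rationally equivalent, and the degree of a $0$-dimensional intersection product is invariant under rational equivalence (Remark \ref{remark:facts_about_rational_equivalence}). Therefore, replacing each $L_{e_j}$ by $\rec(L_{e_j})$ inside the intersection product that enumerates contributions of $C_j$ does not affect the count, so we may assume from the outset that each such tangency condition is a tropical multi line with ends of standard direction.

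I do not expect a serious obstacle: the computational content is already isolated in Proposition \ref{prop:push-forward}(1), and the only thing to check by hand is the identification $L_{e_j}=\partial\ev_{q,*}(Y_{i,q})$, which is tautological from Construction \ref{constr:cutting_along_elevators}. The mildly delicate point is to make sure the replacement can be done simultaneously for all $1/1$ elevators, which is fine because the rational equivalences performed at different elevators involve disjoint sets of labels and hence commute with one another in the intersection product.
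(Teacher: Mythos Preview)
Your overall approach --- identify the tangency condition passed along a $1/1$ elevator with a push-forward $\partial\ev_{q,*}(Y_{i,q})$ and invoke Proposition~\ref{prop:push-forward}(1) --- is the same as the paper's. But there is a circularity you have not broken, and the paper breaks it precisely by the word ``inductively'' in its one-line proof.

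The cycle $Y_{i,q}$ of Proposition~\ref{prop:push-forward} is built from \emph{all} conditions imposed on the single floor $C_i$, and once every elevator adjacent to $C_i$ has been cut, those conditions include the tangency conditions $L_k$ coming from the \emph{other} $1/1$ elevators of $C_i$. The proof of Proposition~\ref{prop:push-forward} genuinely uses that such $L_k$ have ends of standard direction (this is where it argues that, after shifting $L_{10}$ far to the left, a contracted end can only meet a condition that is locally parallel to the $x$-axis). So to apply the proposition at the elevator $e$, you must already know the corollary for the other elevators of $C_i$. Your identification ``$L_{e_j}=\partial\ev_{q,*}(Y_{i,q})$'' is therefore not tautological: either you cut only $e$, in which case the piece on the $C_i$-side is generally larger than the floor $C_i$ and $Y_{i,q}$ is not the right cycle; or you cut all elevators of $C_i$, in which case $Y_{i,q}$ involves conditions whose standard-direction property is exactly what is to be shown.

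The paper resolves this by cutting $1/1$ elevators one at a time. Since the floor graph is a tree, one can always choose the next elevator adjacent to a current leaf floor; that floor then carries only original multi-line conditions plus conditions from previously handled cuts (already replaced by multi-lines), so Proposition~\ref{prop:push-forward}(1) applies cleanly, and one proceeds inward. Your closing remark about disjoint labels and commuting rational equivalences addresses a different (and easier) concern; the real ``mildly delicate point'' is ensuring the hypotheses of the proposition are met at each step, which requires this inductive order of cutting.
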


\begin{proof}
Apply part (1) of Proposition \ref{prop:push-forward} inductively by cutting one $1/1$ elevator after another.
\end{proof}

\begin{remark}
Notice that Proposition \ref{prop:push-forward} can also be shown the way Corollary 2.31 in \cite{GeneralKontsevich} was shown, where Corollary 2.31 follows from Proposition 2.1 of \cite{GeneralKontsevich}. Since  Proposition 2.1 of \cite{GeneralKontsevich} is actually a stronger statement than Proposition \ref{prop:push-forward} there is is no need to evoke the machinery developed in \cite{GeneralKontsevich}.
\end{remark}

\begin{proof}[Proof of Proposition \ref{prop:push-forward}]
Let $L_{10}$ be a degenerated tropical line in $\mathbb{R}^2$ which is parallel to the $y$-axis as in Definition \ref{def:degenerated_trop_lines}. The projection formula (Proposition \ref{prop:projection_formula} in case of abstract cycles) yields
\begin{align}\label{eq:alternative_proof_push-forward:proj_formula}
L_{10}\cdot \partial\ev_{q,*}(Y_{i,q})=\partial\ev_{q,*}\left( \partial\ev_{q}^*(L_{10})\cdot Y_{i,q} \right).
\end{align}
Assume that the degenerated line $L_{10}$ is shifted in the direction $(-1,0)\in\mathbb{R}^2$ such that $L_{10}$ intersects $\partial\ev_{q,*}(Y_{i,q})\subset\mathbb{R}^2$ only in $1$-dimensional ends of $\partial\ev_{q,*}(Y_{i,q})$.

Let $\pi:\mathbb{R}^3\to\mathbb{R}^2$ be the projection that forgets the $z$-coordinate and let
\begin{align*}
\tilde{\pi}:
\mathcal{M}_{0,1}\left(\mathbb{R}^3, \Delta^3_{s_i}\left( \alpha^{i},\beta^{i} \right)\right)
\to
\mathcal{M}_{0,1+|\alpha^{i}|+|\beta^{i}|}\left(\mathbb{R}^2, \pi\left(\Delta^3_{s_i}\left( \alpha^{i},\beta^{i} \right)\right)\right)
\end{align*}
be its induced map $\pi$ on the moduli spaces as in Notation \ref{notation:projection_moduli_spaces}.

Each tropical stable map corresponding to a point of $\tilde{\pi}_*(Y_{i,q})$ can be lifted uniquely to a tropical stable map corresponding to a point in $Y_{i,q}$ as in the proof of Proposition \ref{prop:mult_CR_floor_diagram_2_dim}. Thus for
\begin{align*}
&Y_{\pi,i,q}:=\\
&\prod_{k\in \underline{\kappa_{i}}^{\alpha}\cup \underline{\kappa_{i}}^{\beta}} \ev_k^{*}(L_k)& 
\cdot
\prod_{f\in \underline{\eta_{i}}^{\alpha}\cup \underline{\eta_{i}}^{\beta}} \ev_f^{*}(P_f)
\cdot
\prod_{j\in \underline{l_i}} \ft_{\lambda_j^{\to q}}^*\left( 0\right)
\cdot
\ev_i^*\left( \pi(p_i)\right)
\cdot
\mathcal{M}_{0,1+|\alpha^{i}|+|\beta^{i}|}\left(\mathbb{R}^2, \pi\left(\Delta^3_{s_i}\left( \alpha^{i},\beta^{i} \right)\right)\right)
\end{align*}
the equality 
\begin{align}\label{eq:proof_push_forward_conditions_standard_directions}
\tilde{\pi}_*(Y_{i,q})=Y_{\pi,i,q}
\end{align}
holds on the level of sets. To see that \eqref{eq:proof_push_forward_conditions_standard_directions} also holds on the level of cycles, multiplicities are compared. Notice that each multiplicity of a top-dimensional cell of $\tilde{\pi}_*(Y_{i,q})$ (resp. $Y_{\pi,i,q}$) arises as a product of a cross-ratio multiplicity and an index of an $\ev$-matrix, see Definition \ref{def:ev_matrix}. The lifting of the proof of Proposition \ref{prop:mult_CR_floor_diagram_2_dim} guarantees that the cross-ratio multiplicity part coincides. Let $\sigma$ be a top-dimension cell of $Y_{i,q}$ the $\ev$-multiplicity part of $\sigma$ is given by the absolute value of the index of the $\ev$-Matrix $M(\sigma)$ associated to $\sigma$, see Definition \ref{def:ev_matrix}. We choose $p_i$ as base point for the local coordinates used for $M(\sigma)$. Then
\begin{align*}
M(\sigma)=
\begin{array}{c ccc| ccc c}
  & \multicolumn{3}{c}{\footnotesize \textrm{Base $p_i$}} & &&&\\
  \ldelim({6}{0.5em} &1&0&0& 0& \dots & 0& \rdelim){6}{0.5em} \\
  &0&1&0&0&\dots&0&\\
  &0&0&1&0&\dots&0&\\
  \cline{1-8} & \multicolumn{2}{c}{\multirow{3}*{$*$}} & \multicolumn{1}{|c|}{\multirow{1}*{$*$}} &  \multicolumn{3}{c}{\multirow{3}*{$*$}} & \\
&&&\multicolumn{1}{|c|}{\multirow{1}*{\vdots}}&&&&\\
&&&\multicolumn{1}{|c|}{\multirow{1}*{$*$}}&&&&\\
\end{array}.
\end{align*}
The $\ev$-matrix $M(\pi(\sigma))$ is obtained from $M(\sigma)$ by erasing the third column and row which intersect in the $z$-coordinate of the base point, which is $1$.
Recall that $\partial\ev$ is by definition $\ev\circ\pi$, i.e. the indices of $M(\sigma)$ and $M(\pi(\sigma))$ are equal. Therefore \eqref{eq:proof_push_forward_conditions_standard_directions} holds on the level of cycles.

By definition of $\partial\ev_q$ and the arguments from before, the right-hand side of \eqref{eq:alternative_proof_push-forward:proj_formula} is equal to $\ev_{q,*}\left( \ev_{q}^*(L_{10})\cdot Y_{\pi,i,q} \right)$. Moreover, by shifting $L_{10}$ to the left as before, we can assume that $\ev_{q}^*(L_{10})$ intersects $Y_{\pi,i,q}$ only in ends of $Y_{\pi,i,q}$.

We claim that each tropical stable map $C$ that contributes to the $0$-dimensional cycle $\ev_{q}^*(L_{10})\cdot Y_{\pi,i,q}$ has an end of direction $(-1,0)\in\mathbb{R}^2$ that is adjacent to a $3$-valent vertex $v$ which in turn is adjacent to the contracted end $q$. To prove the claim, it is sufficient to show that $C$ has no vertex that is not adjacent to $q$ whose $x$-coordinate is smaller or equal to the one of $L_{10}$. Assume that there is a vertex $v$ of $C$ that is not adjacent to $q$ and that the $x$-coordinate of $v$ is minimal. Assume also that $v$ is adjacent to an end $e$ of direction $(-1,0)\in\mathbb{R}^2$. Since each entry of a given tropical cross-ratio is a contracted end, Corollary \ref{cor:CR_pfade_ueber_alle_edges_an_vertex} yields that $v$ is $3$-valent. If $v$ is adjacent to a contracted end $e$, then this end needs to satisfy a condition, otherwise $v$ allows a $1$-dimensional movement which is a contradiction. Since $L_{10}$ was moved sufficiently into the direction of $(-1,0)$ in $\mathbb{R}^2$, we know that the condition $e$ satisfies can only be a multi line condition that locally around $v$ is parallel to the $x$-axis of $\mathbb{R}^2$. Hence $v$ allows again a $1$-dimensional movement which is a contradiction. In total, $v$ is $3$-valent, adjacent to an end of direction $(-1,0)$ and is not adjacent to a contracted end.

Assume additionally that the $y$-coordinate of $v$ is minimal among the vertices with minimal $x$-coordinate that are not adjacent to $q$ and that are adjacent to an end of direction $(-1,0)$.
We distinguish two cases:
\begin{itemize}
\item
In the fist case, the $x$-coordinate of $v$ is strictly smaller than the one of $L_{10}$. Hence $v$ is by balancing (all ends have weight $1$) adjacent to an edge of direction $(0,-1)$. If this edge is an end, then $v$ gives rise to a $1$-dimensional movement which is a contradiction. If this edge leads to a vertex $v'$ that is adjacent to a contracted end $t$, then $t$ can only satisfy a multi line condition that locally around $t$ is parallel to the $x$-axis of $\mathbb{R}^2$. By our assumption, there is no vertex with the same $x$-coordinate as $v'$ below $v'$ that is adjacent to an end of direction $(0,-1)$.
Hence $v$ allows a $1$-dimensional movement which is a contradiction.
\item
In the second case, the $x$-coordinate of $v$ equals the $x$-coordinate of $L_{10}$ and $v$ is adjacent to a vertex $v'$ that is in turn adjacent to the contracted end $q$ such that the $y$-coordinate of $v'$ is smaller than the one of $v$ (if there is no such vertex $v'$, then we end up with the same contradiction as in case one). By Corollary \ref{cor:CR_pfade_ueber_alle_edges_an_vertex}, $v'$ cannot be adjacent to an end of direction $(-1,0)$ and by minimality of the $y$-coordinate of $v$, there is an end $e'$ of direction $(0,-1)\in\mathbb{R}^2$ adjacent to $v'$ which is parallel to $L_{10}$. Thus $v'$ allows a $1$-dimensional movement which is a contradiction.
\end{itemize}
Thus the claim is true.

Since the $x$-coordinate of $L_{10}$ is so small that $\ev_{q}^*(L_{10})$ intersectes $Y_{\pi,i,q}$ in ends only, we can use the claim from above to determine the directions of those ends: If we consider a point in $\ev_{q}^*(L_{10})\cdot Y_{\pi,i,q}$, forget $\ev_{q}^*(L_{10})$ and move $q$ a bit, the primitive direction of movement of $q$ is the direction $(-1,0)\in\mathbb{R}^2$, which is a standard direction. Moving $L_{10}$ slightly and applying \eqref{eq:alternative_proof_push-forward:proj_formula} yields that $L_{10}$ intersects the push-forward $\partial\ev_{q,*}(Y_{i,q})$ in ends of direction $(-1,0)\in\mathbb{R}^2$ only.

We can use similar arguments for $L_{10}$ if the $x$-coordinate of $L_{10}$ is so large that it intersects $\partial\ev_{q,*}(Y_{i,q})$ in ends only, and we can use similar arguments for $L_{01}$ with small (resp. large) $y$-coordinate. In total, it follows that ends of $\partial\ev_{q,*}(Y_{i,q})$ are of standard direction and that their weights are given as in Proposition \ref{prop:push-forward}.
\end{proof}

\section{Cross-ratio floor diagrams}

\begin{definition}[Cross-ratio floor diagram]\label{def:CR_floor_diagram}
Let $p_{[n]},L_{\underline{\kappa}^\alpha},L_{\underline{\kappa}^\beta},P_{\underline{\eta}^\alpha},P_{\underline{\eta}^\beta},\lambda_{[l]}$ be general positioned conditions as in Definition \ref{def:general_pos} with respect to the degree $\Delta^3_d(\alpha,\beta)$, i.e. $\#\Delta^3_d(\alpha,\beta)=2n+l+2\cdot \#\underline{\eta}+\#\underline{\kappa}$. Moreover, each entry of a degenerated tropical cross-ratio is a label of a contracted end or a label of an end of primitive direction $(0,0,\pm 1)\in\mathbb{R}^3$.

A \textit{cross-ratio floor diagram} $\mathcal{F}$ (that satisfies the given conditions) is a tree without ends on a totally ordered set of vertices $v_1<\dots <v_n$ with a flow structure that is a condition flow of type $3$ such that $\mathcal{F}$ satisfies the following properties:
\begin{enumerate}[label=(\arabic*),ref=(\arabic*)]
\item \label{item_1:def:cr_floor_diagram}
Each vertex is labeled with a possibly empty set of labeled ends $\delta_{v_i}\subset\Delta^3_d(\alpha,\beta)$ such that $\delta_{v_i}\cap \delta_{v_j}=\emptyset$ for all $i\neq j$ and $\bigcup_{i=1}^n \delta_{v_i}=\Delta^3_d(\alpha,\beta)$.
\item \label{item_2:def:cr_floor_diagram}
Each edge $e$ of $\mathcal{F}$ (consisting of two half-edges) is equipped with a \textit{weight} $\omega(e)\in\mathbb{N}_{>0}$ such that vertices $v_i$ of $\mathcal{F}$ are \textit{balanced} with respect to these weights, i.e.
\begin{align*}
\#\delta_{v_i}^{(1,1,1)}+\sum_{e'\in\delta_{v_i}^\beta}\omega(e')+\sum_{\substack{\textrm{$e$ an edge}\\\textrm{between $v_i<v_j$}}} \omega(e) -\sum_{e'\in\delta_{v_i}^\alpha}\omega(e')- \sum_{\substack{\textrm{$e$ an edge}\\\textrm{between $v_j<v_i$}}} \omega(e)=0
\end{align*}
holds for all $i\in [n]$, where $\delta_{v_i}^{(1,1,1)}$ is the subset of $\delta_{v_i}$ that contains all ends of primitive direction $(1,1,1)$, $\delta_{v_i}^\beta$ is the subset of $\delta_{v_i}$ that contains all ends of primitive direction $(0,0,1)$ and $\delta_{v_i}^\alpha$ is the subset of $\delta_{v_i}$ that contains all ends of direction $(0,0,-1)$.
\item \label{item_3:def:cr_floor_diagram}
The graph $\mathcal{F}$ satisfies the given degenerated tropical cross-ratios $\lambda_{[n]}$. More precisely, $\mathcal{F}$ satisfies a degenerated tropical cross-ratio $\lambda_j$ if $\mathcal{F}$ satisfies the path criterion (see Remark \ref{remark:path_criterion}) for $j\in [l]$, where the paths' end points are given as follows. If $\beta_t$ is an entry of $\lambda_j$ that is the label of a contracted end $x_i$ that satisfies a point condition $p_i$, then the end point associated to $\beta_t$ is the vertex $v_i$. If $\beta_t$ is the label of a non-contracted end instead, i.e. this end appears in $\delta_{v_i}^\alpha$ or $\delta_{v_i}^\beta$ for some vertex $v_i$, then the end point associated to $\beta_t$ is $v_i$. We say that the degenerated tropical cross-ratio $\lambda_j$ is satisfied at a vertex $v_i$ of $\mathcal{F}$ if the paths associated to $\lambda_j$ intersect only in $v_i$. The set of all tropical cross-ratios satisfied at a vertex $v_i$ is denoted by $\lambda_{v_i}$ and $\#\lambda_{v_i}\in\mathbb{N}$ is called the \textit{number of tropical cross-ratios at $v_i$}.
\item \label{item_4:def:cr_floor_diagram}
Define
\begin{align*}
A(v_i):=3\cdot\#\delta_{v_i}^{(1,1,1)}+ \#\delta_{v_i}^{\alpha}+ \#\delta_{v_i}^{\beta}+\val(v_i) -2 -\#\lambda_{v_i}-2( \#\delta_{v_i}^{\alpha,P}+ \#\delta_{v_i}^{\beta, P})- \#\delta_{v_i}^{\alpha,L}- \#\delta_{v_i}^{\beta,L}
\end{align*}
for every vertex $v_i$, where $\val(v_i)$ is the valency of $v_i$ in $\mathcal{F}$ and $\delta_{v_i}^{\alpha},\delta_{v_i}^{\beta},\delta_{v_i}^{\alpha,P},\delta_{v_i}^{\beta, P},\delta_{v_i}^{\alpha,L},\delta_{v_i}^{\beta,L}$ are submultisets of $\delta_{v_i}$ such that
\begin{itemize}
\item
$\delta_{v_i}^{\alpha}$ (resp. $\delta_{v_i}^{\beta}$) are the ends that are associated to $\alpha$ (resp. $\beta$),
\item
$\delta_{v_i}^{\alpha,P}\subset\delta_{v_i}^{\alpha}$ (resp. $\delta_{v_i}^{\beta,P}\subset\delta_{v_i}^{\beta}$) are the ends that satisfy some codimension one tangency conditions (see Notation \ref{notation:convention_conditions_related_to_ends}),
\item
$\delta_{v_i}^{\alpha,L}\subset\delta_{v_i}^{\alpha}$ (resp. $\delta_{v_i}^{\beta,L}\subset\delta_{v_i}^{\beta}$) are the ends that satisfy some codimension two tangency conditions (see Notation \ref{notation:convention_conditions_related_to_ends}).
\end{itemize}
The leak function of $\mathcal{F}$ is given by
\begin{align*}
\textrm{\, \,}\leak &(v_i)=
\begin{cases}
0 &\textrm{, if $\delta_{v_i}^{(1,1,1)}=\emptyset$} \\
A(v_i) &\textrm{, else}\\
\end{cases}
\end{align*}
Notice that the leak function determines the condition flow of type $3$ on $\mathcal{F}$ uniquely by Lemma \ref{lemma:condition_flow_unique}.
\end{enumerate}
\end{definition}

\begin{example}\label{ex:cross-ratio_floor_diagram_from_floor_graph}
Let $\Delta^3_4\left( (4,1,0,\dots), (2,0,\dots) \right)$ denote a degree as in Notation \ref{notation:standard_directions_and_alpha_beta_degrees} whose labeling is:

\begin{align*}
\begin{array}{c|c|c|c|c|c}
\textrm{Ends of primitive direction\dots} & e_3 & -e_3 & -e_1 & -e_2 & e_0 \\
\hline
\textrm{its associated labels} & 3,9 & 4,5,6,7,8 & 10,11,12,19 & 13,14,15,20 & 16,17,18,21
\end{array},
\end{align*}
such that the end of weight two is labeled by $8$. Let $p_{[2]}, P_{[9]\backslash [2]}, \lambda_1=\lbrace 1,2,3,7 \rbrace$ general positioned conditions with notation from Definition \ref{def:general_pos}. Recall the floor graph from Example \ref{ex:floor_graph}. Equipping it with discrete data as below turns it into a cross-ratio floor diagram $\mathcal{F}$ that satisfies $p_{[2]}, P_{[9]\backslash [2]}, \lambda_1$.

\begin{figure}[H]
\centering\hspace*{1.5cm}
\def\svgwidth{230pt}
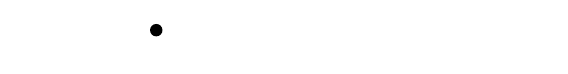
\label{Figure34}

\setlength{\tabcolsep}{3.5em}
\renewcommand{\arraystretch}{1.6} 
\begin{table}[H]
  \centering
      \centering
      \begin{tabular}{*{3}{|c}|}\hline
        $i$ & $1$ & $2$\\\hline
        $\delta_{v_i}$ & $[18]\backslash \lbrace 2,9 \rbrace$ & $\lbrace 2,9,19,20,21\rbrace$  \\\hline
        $\delta_{v_i}^{(1,1,1)}$& $\lbrace 16,17,18\rbrace$ & $\lbrace 21\rbrace$ \\\hline
        $\delta_{v_i}^\alpha$ & $\lbrace 4,5,6,7,8 \rbrace$ & $\emptyset$ \\\hline
        $\delta_{v_i}^\beta$ & $\lbrace 3 \rbrace$ & $\lbrace 9 \rbrace$ \\\hline
        $\lambda_{v_i}$ & $\lbrace \lambda_1 \rbrace$ & $\emptyset$ \\\hline
        $\delta_{v_i}^{\alpha,P}$ & $\lbrace 4,5,6,7,8 \rbrace$ & $\emptyset$ \\\hline
        $\delta_{v_i}^{\beta,P}$ & $\lbrace 3 \rbrace$ & $\lbrace 9 \rbrace$ \\\hline
        $\delta_{v_i}^{\alpha,L}$ & $\emptyset$ & $\emptyset$ \\\hline
        $\delta_{v_i}^{\beta,L}$ & $\emptyset$ & $\emptyset$ \\\hline
      \end{tabular}
\end{table}
\end{figure}

\end{example}

\begin{definition}[Multiplicity of a cross-ratio floor diagram]\label{def:mult_CR_floor_diagram}
Let $p_{[n]},L_{\underline{\kappa}^\alpha},L_{\underline{\kappa}^\beta},P_{\underline{\eta}^\alpha},P_{\underline{\eta}^\beta},\lambda_{[l]}$ be general positioned conditions and let $\mathcal{F}$ be a floor diagram as in Definition \ref{def:CR_floor_diagram} that satisfies the given conditions. Let $v_1<\cdots <v_n$ denote the totally ordered vertices of $\mathcal{F}$. For $\gamma=\alpha,\beta$, define the following:
Let $\underline{{2/0}_i}^\gamma$ be the $2/0$ edges adjacent to $v_i$ and $v_j$ (with $j\neq i$) in $\mathcal{F}$ such that $j<i$ if $\gamma=\alpha$ and $i<j$ if $\gamma=\beta$. Let $\underline{{1/1}_i}^\gamma$ be the $1/1$ edges adjacent to $v_i$, where $\gamma\in\lbrace\alpha,\beta\rbrace$ is defined analogously.

For a vertex $v_i$ of the cross-ratio floor diagram $\mathcal{F}$, its \textit{multiplicity} $\mult(v_i)$ is defined as
\begin{align*}
\mult(v_i):=N_{\Delta^3_{\#\delta_{v_i}^{(1,1,1)}}\left( \alpha^{i},\beta^{i} \right)}\left(p_i, L_{\delta_{v_i}^{\alpha,L} \cup \underline{{1/1}_i}^\alpha},L_{\delta_{v_i}^{\beta,L} \cup \underline{{1/1}_i}^\beta},P_{\delta_{v_i}^{\alpha,P} \cup \underline{{2/0}_i}^\alpha},P_{\delta_{v_i}^{\beta,P} \cup \underline{{2/0}_i}^\beta},\lambda^{\to}_{v_i} \right),
\end{align*}
with notation from \ref{def:CR_floor_diagram} and Notation \ref{notation:underlined_symbols}, where $\alpha^{i}$ (resp. $\beta^{i}$) of the degree $\Delta^3_{\#\delta_{v_i}^{(1,1,1)}}\left( \alpha^{i},\beta^{i} \right)$ arises from $\delta_{v_i}^\alpha$ (resp. $\delta_{v_i}^\beta$) and edges contributing to $\val(v_i)$ in $\mathcal{F}$, and where $L_{\underline{{1/1}_i}^\alpha}$ (resp. $L_{\underline{{1/1}_i}^\beta}$) are collections of tropical multi line conditions with ends of weight $1$. Moreover, the cross-ratios $\lambda_{v_i}$ are adapted to cutting the edges adjcent to $v_i$ similar to Construction \ref{constr:cutting_along_elevators} and Notation \ref{notation:adapt_CRs_to_multiple_cuts}. The \textit{multiplicity} of the entire cross-ratio floor diagram $\mathcal{F}$ is defined to be the product of the vertices' multiplicities times the edges' weights, i.e.
\begin{align*}
\mult(\mathcal{F}):=\prod_{\substack{\textrm{$e$ edge}\\\textrm{ of $\mathcal{F}$}}} \omega(e)\cdot\prod_{i=1}^n \mult(v_i).
\end{align*}
\end{definition}

\begin{example}\label{ex:multiplicity_cross-ratio_floor_diagram}
Let $\mathcal{F}$ be the cross-ratio floor diagram of Example \ref{ex:cross-ratio_floor_diagram_from_floor_graph}. Label its edge of weight two by $22$. The multiplicities of the vertices $v_1,v_2$ of $\mathcal{F}$ are
\begin{align*}
\mult(v_1)&=N_{\Delta^3_{3}\left( (4,1,0,\dots),(1,1,0,\dots) \right)}\left(p_1,L_{22},P_{[8]\backslash [2]}, \lambda_1^{\to 22} \right),\\
\mult(v_2)&=N_{\Delta^3_{1}\left( (0,1,0,\dots),(1,0,\dots) \right)}\left(p_2,L_{22},P_{9} \right),
\end{align*}
where $\lambda_1^{\to 22}=\lbrace 1,22,3,7\rbrace$ and $L_{22}$ is a tropical multi line with ends of weight $1$ (we use Notation \ref{notation:convention_conditions_related_to_ends}).
\end{example}

\begin{definition}
Given a set of general positioned conditions $p_{[n]},L_{\underline{\kappa}^\alpha},L_{\underline{\kappa}^\beta},P_{\underline{\eta}^\alpha},P_{\underline{\eta}^\beta},\lambda_{[l]}$ as in Definition \ref{def:CR_floor_diagram} with respect to the degree $\Delta^3_d(\alpha,\beta)$, the number of cross-ratio floor diagrams satisfying these conditions is defined by
\begin{align*}
N^{\floor}_{\Delta^3_d(\alpha,\beta)}\left( p_{[n]},L_{\underline{\kappa}^\alpha},L_{\underline{\kappa}^\beta},P_{\underline{\eta}^\alpha},P_{\underline{\eta}^\beta},\lambda_{[l]}\right)
:=
\sum_{\mathcal{F}} \mult(\mathcal{F}),
\end{align*}
where the sum goes over all cross-ratio floor diagrams that satisfy the given conditions.
\end{definition}

\subsection{Multiplicity of a cross-ratio floor diagram via curves in $\mathbb{R}^2$}

The following Proposition reduces the calculation of the multiplicity of a cross-ratio floor diagram to the enumeration of rational tropical stable maps to $\mathbb{R}^2$ satisfying point, multi line and cross-ratio conditions.

\begin{proposition}\label{prop:mult_CR_floor_diagram_2_dim}
For notation, see Notation \ref{notation:underlined_symbols}, \ref{notation:projection_moduli_spaces} and Definition \ref{def:mult_CR_floor_diagram}. The multiplicity $\mult(v_i)$ of a vertex $v_{i}$ of a cross-ratio floor diagram that satisfies general positioned conditions equals the degree of the cycle
\begin{align}\label{eq:proposition_mult_CR_floor_diagram_cycle}
\begin{split}
\prod_{k\in \underline{\kappa_{i}}^{\alpha}\cup \underline{\kappa_{i}}^{\beta}} \ev_k^{*}(L_k)
\cdot
\prod_{f\in \underline{\eta_{i}}^{\alpha}\cup \underline{\eta_{i}}^{\beta}} \ev_f^{*}(P_f)
\cdot
\prod_{\lambda_j\in \lambda_{v_i}} &\ft_{\lambda^{\to}_j}^*\left( 0\right)
\cdot
\ev_i^*\left( \pi(p_i)\right) \\
&\cdot
\mathcal{M}_{0,1+|\alpha^{i}|+|\beta^{i}|}\left(\mathbb{R}^2,\pi\left( \Delta^3_{\#\delta_{v_i}^{(1,1,1)}}\left( \alpha^{i},\beta^{i} \right) \right)\right),
\end{split}
\end{align}
where $\underline{\kappa_{i}}^{\gamma}:=\delta_{v_i}^{\gamma,L} \cup \underline{{1/1}_i}^\gamma$ and $\underline{\eta_{i}}^{\gamma}:=\delta_{v_i}^{\gamma,P} \cup \underline{{2/0}_i}^\gamma$ for $\gamma=\alpha,\beta$.
\end{proposition}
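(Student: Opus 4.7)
The plan is to prove this by establishing that the pushforward of the defining cycle of $\mult(v_i)$ in $\mathbb{R}^3$ under the projection $\tilde{\pi}$ (from Notation \ref{notation:projection_moduli_spaces}) equals the cycle in \eqref{eq:proposition_mult_CR_floor_diagram_cycle}, compatibly with degree. Since the degree is preserved under pushforward (it is just the sum of weights), this reduces to setting up a bijection between points in the two cycles and checking the weights agree. The key observation is that the ends of primitive direction $(0,0,\pm 1)$ in $\Delta^3_{\#\delta_{v_i}^{(1,1,1)}}(\alpha^i,\beta^i)$ are exactly the ones contracted by $\tilde{\pi}$, and under the identity $\partial\ev_k = \pi \circ \ev_k = \ev_k \circ \tilde{\pi}$, every codimension-one tangency condition $P_f$ on such an end in $\mathbb{R}^3$ becomes an ordinary point condition $\ev_f^*(P_f)$ in $\mathbb{R}^2$, and every codimension-two tangency condition $L_k$ becomes a multi line condition $\ev_k^*(L_k)$ in $\mathbb{R}^2$.

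First, I would set up the bijection. Given a rational tropical stable map $C \in \mathcal{M}_{0,1}(\mathbb{R}^3, \Delta^3_{\#\delta_{v_i}^{(1,1,1)}}(\alpha^i,\beta^i))$ satisfying the 3D conditions that defines a point in the cycle $\mult(v_i)$, the composition $\tilde{\pi}(C)$ is a rational tropical stable map in $\mathbb{R}^2$ of degree $\pi(\Delta^3_{\#\delta_{v_i}^{(1,1,1)}}(\alpha^i,\beta^i))$. The contracted end $i$ of $C$ projects to a contracted end satisfying $\pi(p_i)$, and the elevator ends (now contracted) satisfy the $\ev$ conditions mentioned above. The cross-ratio conditions $\lambda^{\to}_j$ only have entries that are labels of contracted ends or ends parallel to $\pm e_3$, so under $\tilde{\pi}$ all these labels survive as contracted ends of $\tilde{\pi}(C)$, and the path criterion (Remark \ref{remark:path_criterion}) is preserved since $\tilde{\pi}$ does not alter the combinatorial type. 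For the inverse, a 2D curve $\tilde C$ satisfying the projected conditions lifts uniquely to a 3D curve: the $z$-coordinate of the vertex adjacent to $p_i$ is prescribed by $p_i$, and the $z$-coordinate on all other vertices is then uniquely determined by propagating along the edges (using the fixed integer direction vectors) together with the fact that elevator ends go off to $\pm\infty$ in the $e_3$-direction.

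Next, I would verify the multiplicities match. The cross-ratio multiplicity at each vertex depends only on the combinatorial data and the set $\lambda_v$, both of which are intrinsic and unchanged by $\tilde{\pi}$, so $\prod_v \mult_{\CR}(v)$ is the same for $C$ and $\tilde{\pi}(C)$. For the $\ev$-multiplicity, the 3D ev-matrix $M(C)$ of a rigid $C$ contains (with a suitable choice of base point adjacent to $p_i$) a row fixing the $z$-coordinate of the base point with only a $1$ in the base's $z$-column. Because no end of $C$ is of direction $(1,1,1)$ apart from those already accounted for and because elevator ends are forced by $\partial\ev = \pi \circ \ev$ to impose trivial conditions in the $z$-direction, Laplace expansion along the $z$-row and $z$-column of the base point strips these off and yields exactly the 2D ev-matrix $M(\tilde{\pi}(C))$ with the same absolute determinant. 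This is the same type of calculation used in the proof of Proposition \ref{prop:push-forward} for comparing $M(\sigma)$ and $M(\pi(\sigma))$, so I would invoke that computation here.

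The main obstacle is the careful bookkeeping to verify that every condition in the 3D cycle translates, under $\tilde{\pi}_*$, to precisely one condition in \eqref{eq:proposition_mult_CR_floor_diagram_cycle}, and that there is no hidden multiplicity factor from the lattice indices appearing in the pushforward formula. This second issue reduces to showing that the lift from 2D to 3D is bijective (not just generically finite), which follows from the fact that each elevator end carries a unique $z$-direction and the base $z$-coordinate is pinned down by $p_i$. Once this is established, $\tilde{\pi}_*$ produces no extra lattice factors, and the degrees of the two cycles coincide, concluding the proof.
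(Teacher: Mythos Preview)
Your proposal is correct and follows essentially the same approach as the paper: both establish a bijection via $\tilde{\pi}$ between 3D and 2D stable maps (with the unique lift coming from the $z$-coordinate of $p_i$ and balancing), observe that cross-ratio multiplicities are unchanged, and then compare the $\ev$-matrices by Laplace-expanding away the base point's $z$-row and $z$-column. One small caution: your appeal to the computation in the proof of Proposition~\ref{prop:push-forward} is slightly circular, since in the paper that proof actually points forward to the very argument in Proposition~\ref{prop:mult_CR_floor_diagram_2_dim}; the matrix comparison should simply be carried out directly here.
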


\begin{proof}
Notice that since the given conditions $p_i, L_{\underline{\kappa_{i}}^{\alpha}},L_{\underline{\kappa_{i}}^{\beta}},P_{\underline{\eta_{i}}^{\alpha}},P_{\underline{\eta_{i}}^{\beta}},\lambda^{\to}_{v_i}$ are in general position with respect to $\Delta^3_{\#\delta_{v_i}^{(1,1,1)}}\left( \alpha^{i},\beta^{i} \right)$ and there is only one point condition $p_i$, we can assume that the conditions $\pi(p_i), L_{\underline{\kappa_{i}}^{\alpha}},L_{\underline{\kappa_{i}}^{\beta}},P_{\underline{\eta_{i}}^{\alpha}},P_{\underline{\eta_{i}}^{\beta}},\lambda^{\to}_{v_i}$ are also in general position with respect to the degree $\pi\left( \Delta^3_{\#\delta_{v_i}^{(1,1,1)}}\left( \alpha^{i},\beta^{i} \right) \right)$. Using \eqref{eq:general_dimension_count_arbitrary_contracted_ends}, we see that the cycle \eqref{eq:proposition_mult_CR_floor_diagram_cycle} is indeed $0$-dimensional. Thus considering its degree makes sense.

Let $C$ be a tropical stable map contributing to $\mult(v_i)$. Applying the map $\tilde{\pi}$ from Notation \ref{notation:projection_moduli_spaces} induced by the projection $\pi:\mathbb{R}^3\to\mathbb{R}^2$ that forgets the $z$-coordinate leads to a tropical stable map $\tilde{\pi}(C)$ that contributes to \eqref{eq:proposition_mult_CR_floor_diagram_cycle}. The other way round, a tropical stable map $C'$ that contributes to \eqref{eq:proposition_mult_CR_floor_diagram_cycle} can be lifted uniquely to a tropical stable map $C$ that contributes to $\mult(v_i)$, because the $z$-coordinates of the directions of the edges can be recovered from the balancing condition and the overall $z$-position of $C$ is fixed by the $z$-coordinate of $p_i$. Hence $\pi$ induces a bijection between tropical stable maps $C$ that contribute to $\mult(v_i)$ and tropical stable maps $\tilde{\pi}(C)$ that contribute to \eqref{eq:proposition_mult_CR_floor_diagram_cycle}.

It remains to show that the multiplicities of $C$ and $\tilde{\pi}(C)$ coincide. For that notice that the cross-ratio multiplicities of every vertex $v\in C$ and its image $\pi(v_i)$ in $\tilde{\pi}(C)$ coincide. Thus is remains to show that the $\ev$-multiplicities coincide as well.
The $\ev$-multiplicity of $C$ (resp. $\tilde{\pi}(C)$) is given by the absolute value of the determinant of the $\ev$-Matrix $M(C)$ (resp. the $\ev$-matrix $M(\tilde{\pi}(C))$) associated to $C$ (resp. $\tilde{\pi}(C)$), see Definition \ref{def:ev_matrix}. We choose $p_i$ as base point for the local coordinates used for $M(C)$ (resp. $\pi(p_i)$ as base point for $M(\tilde{\pi}(C))$) which are the lengths of the edges of $C$ (resp. $\tilde{\pi}(C)$). The matrix $M(\tilde{\pi}(C))$ is obtained from $M(C)$ by erasing the third column and row which intersect in the $z$-coordinate of the base point, which is $1$ (see below). The matrices $M(C)$ and $M(\tilde{\pi}(C))$ look like follows
\begin{align*}
M(C)=
\begin{array}{c ccc| ccc c}
  & \multicolumn{3}{c}{\footnotesize \textrm{Base $p_i$}} & &&&\\
  \ldelim({6}{0.5em} &1&0&0& 0& \dots & 0& \rdelim){6}{0.5em} \\
  &0&1&0&0&\dots&0&\\
  &0&0&1&0&\dots&0&\\
  \cline{1-8} & \multicolumn{2}{c}{\multirow{3}*{$B$}} & \multicolumn{1}{|c|}{\multirow{1}*{$*$}} &  \multicolumn{3}{c}{\multirow{3}*{$M$}} & \\
&&&\multicolumn{1}{|c|}{\multirow{1}*{\vdots}}&&&&\\
&&&\multicolumn{1}{|c|}{\multirow{1}*{$*$}}&&&&\\
\end{array} \textrm{\quad and \quad}
M(\tilde{\pi}(C))=
\begin{array}{c cc| ccc c}
  & \multicolumn{2}{c}{\footnotesize \textrm{Base $\pi(p_i)$}} & &&&\\
  \ldelim({5}{0.5em} &1&0& 0& \dots & 0& \rdelim){5}{0.5em} \\
  &0&1&0&\dots&0&\\
  \cline{1-7} & \multicolumn{2}{c|}{\multirow{3}*{$B$}} &  \multicolumn{3}{c}{\multirow{3}*{$M$}} & \\
&&&&&&\\
&&&&&&\\
\end{array}.
\end{align*}
Recall that $\partial\ev$ is by definition $\ev\circ\pi$, i.e. the submatrices $B,M$ marked above are equal. Therefore
\begin{align*}
|\det(M(C))|=|\det(M(\tilde{\pi}(C))|
\end{align*}
follows from using Laplace expansion on the third row of $M(C)$.
\end{proof}

There is a \textit{general Kontsevich's formula} \cite{GeneralKontsevich} which recursively calculates the weighted number of rational tropical curves in $\mathbb{R}^2$ that satisfy point, multi line and cross-ratio conditions. As a consequence, the multiplicity of a cross-ratio floor diagram can be determined recursively.

\begin{corollary}\label{cor:mult_CR_floor_diagram_via_kontsevich}
The multiplicity $\mult(v_i)$ of a vertex $v_{i}$ of a cross-ratio floor diagram can be calculated recursively using the general Kontsevich's formula from \cite{GeneralKontsevich}.
\end{corollary}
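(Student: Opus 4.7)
The plan is to deduce Corollary \ref{cor:mult_CR_floor_diagram_via_kontsevich} as an essentially immediate consequence of Proposition \ref{prop:mult_CR_floor_diagram_2_dim}. That proposition already rewrites the vertex multiplicity $\mult(v_i)$ as the degree of a zero-dimensional intersection product on the moduli space $\mathcal{M}_{0,1+|\alpha^{i}|+|\beta^{i}|}\!\left(\mathbb{R}^2,\pi\!\left( \Delta^3_{\#\delta_{v_i}^{(1,1,1)}}( \alpha^{i},\beta^{i} ) \right)\right)$ of rational tropical stable maps to $\mathbb{R}^2$, where the factors are pull-backs along evaluation maps of one point $\pi(p_i)$, of codimension-two tropical multi lines $L_k$ (for $k\in \underline{\kappa_i}^\alpha\cup\underline{\kappa_i}^\beta$), of codimension-one points $P_f$ (for $f\in \underline{\eta_i}^\alpha\cup\underline{\eta_i}^\beta$), together with pull-backs along forgetful maps of degenerated tropical cross-ratios $\lambda_j^{\to}$ for $\lambda_j\in\lambda_{v_i}$.

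First I would note that such a zero-dimensional cycle is, by the definition of $N_{\Delta}^{\mathrm{alg}}$-type tropical counts recalled in Section~1, exactly a count of rational plane tropical curves satisfying point, multi line and (degenerated) tropical cross-ratio conditions in general position. In particular, after passing to the recession fan of each $L_k$ as in Remark \ref{remark:recession_fan}, we may assume that all the tangency inputs are honest tropical multi lines in $\mathbb{R}^2$, so that the resulting enumerative number is precisely of the form handled by the main theorem of \cite{GeneralKontsevich}.

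Next I would invoke the general Kontsevich formula of \cite{GeneralKontsevich} verbatim: it provides a recursion that expresses any such number of rational plane tropical curves through point, multi line and tropical cross-ratio conditions in terms of numbers of the same type with strictly fewer cross-ratio conditions (or strictly smaller degree), and whose base case is the classical number of rational plane curves through point and line conditions. Applying this recursion to the cycle produced by Proposition \ref{prop:mult_CR_floor_diagram_2_dim} yields a recursive computation of $\mult(v_i)$.

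The only minor obstacle is bookkeeping: one must check that the input data to the formula of \cite{GeneralKontsevich} -- the degree $\pi\!\left( \Delta^3_{\#\delta_{v_i}^{(1,1,1)}}( \alpha^{i},\beta^{i} ) \right)$, the general-position assumption, and the compatibility of the degenerated tropical cross-ratios $\lambda_j^{\to}$ with the moduli space after cutting -- matches the hypotheses of the Kontsevich-type recursion in \cite{GeneralKontsevich}. All of these are already ensured: general position is guaranteed by Definition \ref{def:CR_floor_diagram} and the adaptation procedure from Construction \ref{constr:cutting_along_elevators}, while the cross-ratio entries remain labels of contracted ends or of ends whose $\mathbb{R}^2$-direction is standard, as required. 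With this verification the corollary follows.
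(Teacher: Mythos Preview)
Your proposal is correct and follows the same line as the paper: the corollary is stated as an immediate consequence of Proposition~\ref{prop:mult_CR_floor_diagram_2_dim}, which rewrites $\mult(v_i)$ as a count of rational plane tropical curves subject to point, multi line, and degenerated cross-ratio conditions, to which the general Kontsevich formula of \cite{GeneralKontsevich} directly applies. The additional bookkeeping you include (recession fans, general position, adapted cross-ratios) is appropriate but the paper does not spell it out, treating the corollary as self-evident.
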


\begin{example}\label{ex:multiplicity_cross-ratio_floor_diagram_apply_general_kontsevich}
Let $\mathcal{F}$ be the cross-ratio floor diagram of Example \ref{ex:cross-ratio_floor_diagram_from_floor_graph}.
Using Proposition \ref{prop:mult_CR_floor_diagram_2_dim} to express $\mult(v_1),\mult(v_2)$ of Example \ref{ex:multiplicity_cross-ratio_floor_diagram} yields
\begin{align*}
\mult(v_1)&=N_{\Delta^2_{3}}\left(\pi(p_1),\pi\left(P_{[8]\backslash [2]}\right), L_{22} \lambda_1^{\to 22} \right),\\
\mult(v_2)&=N_{\Delta^2_{1}}\left(\pi(p_2),\pi\left(P_{9}\right), L_{22} \right).
\end{align*}
This allows us to use Corollary \ref{cor:mult_CR_floor_diagram_via_kontsevich}, resp. general Kontsevich's formula \cite{GeneralKontsevich}. Hence
\begin{align*}
\mult(v_1)&=5,\\
\mult(v_2)&=1.
\end{align*}
Therefore $\mult(\mathcal{F})=10$.
\end{example}

\section{Counting tropical curves using cross-ratio floor diagrams}

The aim of this section is to prove the following theorem which is the main result of this paper. It reduces the count of spatial curves satisfying given conditions to a counting problem of cross-ratio floor diagrams. There are only finitely many cross-ratio floor diagrams to given conditions. Thus the weighted number of cross-ratio floor diagrams satisfying given conditions can be determined by going through all possible cross-ratio floor-diagrams, which then answers the initial counting problem.

\begin{theorem}\label{thm:counting_CR_floor_diagrams=counting_curves}
For notations, see Definition \ref{def:general_pos}, \ref{def:numbers_of_interest} and Notation \ref{notation:underlined_symbols}, \ref{notation:standard_directions_and_alpha_beta_degrees}.
Consider general positioned conditions $p_{[n]},L_{\underline{\kappa}^\alpha},L_{\underline{\kappa}^\beta},P_{\underline{\eta}^\alpha},P_{\underline{\eta}^\beta},\lambda_{[l]}$ with respect to a degree $\Delta_d^3(\alpha,\beta)$ such that each entry of a degenerated tropical cross-ratio is a label of a contracted end or a label of an end of primitive direction $(0,0,\pm 1)\in\mathbb{R}^3$. Then
\begin{align}\label{eq:thm:counting_CR_floor_diagrams=counting_curves}
N^{\floor}_{\Delta^3_d(\alpha,\beta)}\left( p_{[n]},L_{\underline{\kappa}^\alpha},L_{\underline{\kappa}^\beta},P_{\underline{\eta}^\alpha},P_{\underline{\eta}^\beta},\lambda_{[l]}\right)
=
N_{\Delta^3_d(\alpha,\beta)}\left( p_{[n]},L_{\underline{\kappa}^\alpha},L_{\underline{\kappa}^\beta},P_{\underline{\eta}^\alpha},P_{\underline{\eta}^\beta},\lambda_{[l]}\right)
\end{align}
holds, i.e. the weighted count of cross-ratio floor diagrams satisfying the given conditions equals the weighted count of rational tropical stable degree $\Delta_d^3(\alpha,\beta)$ maps to $\mathbb{R}^3$ satisfying the given conditions.
\end{theorem}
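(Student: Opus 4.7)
The plan is to establish \eqref{eq:thm:counting_CR_floor_diagrams=counting_curves} by constructing a natural surjection from the contributing tropical stable maps to cross-ratio floor diagrams, and then matching multiplicities on each fiber.

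First I would exploit the rational-equivalence invariance of the enumerative number (Remark \ref{remark:facts_about_rational_equivalence}) to translate the point conditions $p_{[n]}$ into a stretched configuration (Definition \ref{def:stretched_config}). Proposition \ref{prop:floor_decomposed} then guarantees that every tropical stable map contributing to $N_{\Delta_d^3(\alpha,\beta)}(\dots)$ is floor decomposed. To each such $C$ I would associate a cross-ratio floor diagram $\mathcal{F}_C$: the underlying graph is the floor graph $\Gamma_C$ (Definition \ref{def:floor_graph}); each labeled end of $C$ is placed into $\delta_{v_i}$ for the vertex $v_i$ corresponding to its floor; each degenerated cross-ratio $\lambda_j$ is assigned to $\lambda_{v_i}$ where $v_i$ corresponds to the floor containing the vertex of $C$ at which $\lambda_j$ is satisfied; edge weights and flow structure are inherited from $C$. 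The hypothesis that every entry of a degenerated cross-ratio is either a contracted end or parallel to $e_3$ ensures, via the path criterion (Remark \ref{remark:path_criterion}), that such a vertex (and hence such a $v_i$) is uniquely determined. Axioms \ref{item_1:def:cr_floor_diagram}--\ref{item_3:def:cr_floor_diagram} of Definition \ref{def:CR_floor_diagram} are then immediate, while \ref{item_4:def:cr_floor_diagram} is a local dimension count at each floor mirroring \eqref{eq:general_dimension_count_arbitrary_contracted_ends}.

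The heart of the argument is the per-diagram identity
\begin{equation*}
\sum_{C\,:\,\mathcal{F}_C=\mathcal{F}} \mult(C) \;=\; \mult(\mathcal{F}),
\end{equation*}
from which \eqref{eq:thm:counting_CR_floor_diagrams=counting_curves} follows by summing over all valid $\mathcal{F}$. I would proceed by induction on the number of elevators, cutting them one at a time via Construction \ref{constr:cutting_along_elevators}. For a $2/0$ elevator $e$, Lemma \ref{lemma:weights_cutting_elevators}(a) splits $\mult(C)=\omega(e)\cdot\mult(C_1)\cdot\mult(C_2)$; once the induced point condition $P_{e_i}$ on the cut end is fixed, the two sides may be chosen independently, so the sum factors immediately as $\omega(e)\cdot\mult(v_i)\cdot\mult(v_j)$. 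For a $1/1$ elevator, Lemma \ref{lemma:weights_cutting_elevators}(b) and its iterate Proposition \ref{prop:graphical_contributions} express $\mult(C)$ as the absolute value of a signed sum of products indexed by graphical contributions, i.e.\ by a choice of $L_{10}$ or $L_{01}$ at each end of a cut $1/1$ edge. The crucial geometric input is Proposition \ref{prop:push-forward}: the push-forward $\partial\ev_{e,*}(Y_{i,e})$ has a recession fan consisting of ends of standard direction only, hence up to rational equivalence may be represented by a tropical multi line in $\mathbb{R}^2$. The projection formula (Proposition \ref{prop:projection_formula}) then identifies the factor produced at each vertex by the cut with the $\mult(v_i)$ of Definition \ref{def:mult_CR_floor_diagram}, and Proposition \ref{prop:mult_CR_floor_diagram_2_dim} lets us work entirely with plane curves, where general Kontsevich's formula (Corollary \ref{cor:mult_CR_floor_diagram_via_kontsevich}) computes everything.

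The main obstacle I expect is reconciling the absolute value in Proposition \ref{prop:graphical_contributions} with the product form of $\mult(\mathcal{F})$. Concretely, one must verify that the combinatorial sign $(-1)^u$ of Definition \ref{def:graphical_contribution} combines with the orientation data of the rational equivalence between $\partial\ev_{e,*}(Y_{j,e})$ and the multi line $L_e$ appearing in Definition \ref{def:mult_CR_floor_diagram} so that, after summing over all $C$ with $\mathcal{F}_C=\mathcal{F}$, the signed sum $\sum_G \mult(G)$ has a fixed sign and reassembles as $\prod_e\omega(e)\prod_i \mult(v_i)$. This is where the bulk of the work lies: it requires choosing consistent orientations on the bases of the ev-matrices of Definition \ref{def:ev_matrix} at each floor, and tracking how these orientations propagate across successive cuts. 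Once this sign bookkeeping is settled --- so that the absolute value can be pulled outside of the sum over $C$ --- the inductive step closes, and the theorem follows.
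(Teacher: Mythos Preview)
Your overall architecture matches the paper's: stretch the conditions, invoke Proposition \ref{prop:floor_decomposed}, associate a cross-ratio floor diagram to each contributing curve (the paper packages this as Construction \ref{constr.:associating_CR_floor_diagram_to_curve}), and then prove the per-diagram identity $\sum_{C\to\mathcal{F}}\mult(C)=\mult(\mathcal{F})$ by peeling off one leaf elevator at a time. The $2/0$ case is handled exactly as you say via Lemma \ref{lemma:weights_cutting_elevators}(a).

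The divergence, and a genuine gap, is in the $1/1$ case. You propose to confront the signed sum of Proposition \ref{prop:graphical_contributions} head-on and hope that consistent orientation choices let you pull the absolute value outside the sum over $C$. But $\sum_{C\to\mathcal{F}}\mult(C)=\sum_C\bigl|\sum_G\mult(G)\bigr|$, with the absolute value \emph{inside} the outer sum; you give no argument that the inner signed sums all have the same sign, and there is no obvious reason they should. The paper avoids this issue entirely by a geometric move you do not mention: after isolating a leaf $v_i$ joined to $v_j$ by a $1/1$ edge $q$, it translates the conditions in $B_i$ and $B_j$ (staying stretched) so that the push-forwards $L^{(i)}$ and $L^{(j)}$ along $q$ intersect only where $L^{(i)}$ has ends of direction $(0,-1)$ and $L^{(j)}$ has ends of direction $(-1,0)$. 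By Proposition \ref{prop:push-forward}(2) this forces the vertex of each floor adjacent to $q$ to be $3$-valent with a neighboring end of a prescribed projected direction, and in that configuration one of the two degenerated-line replacements in Lemma \ref{lemma:weights_cutting_elevators}(b) fails to fix the curve and contributes zero (cf.\ Example \ref{ex:replace_tangency_condition_floor_decomposed}). Thus for \emph{every} $C\to\mathcal{F}$ only the single term $\mult(C_{i,\emptyset;10})\mult(C_{j,01;\emptyset})$ survives, the absolute value is vacuous, and the sum factors as $\omega(q)\cdot N(B_i\cup\{E^{(i)}\})\cdot\sum_{C_j\to\mathcal{F}_j}\mult(C_j)$ with $E^{(i)}$ a weight-one line --- matching Definition \ref{def:mult_CR_floor_diagram} on the nose. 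This positioning trick, not orientation bookkeeping, is the missing idea.
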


\begin{construction}[Floor decomposed tropical stable map $\mapsto$ cross-ratio floor diagram]\label{constr.:associating_CR_floor_diagram_to_curve}
Let $C$ be a floor decomposed tropical stable map contributing to $N_{\Delta_d^3(\alpha,\beta)}\left(p_{[n]},L_{\underline{\kappa}^\alpha},L_{\underline{\kappa}^\beta},P_{\underline{\eta}^\alpha},P_{\underline{\eta}^\beta},\lambda_{[l]} \right)$. We want to construct a cross-ratio floor diagram $\mathcal{F}$ as in Definition \ref{def:CR_floor_diagram} (that satisfies given conditions) with vertices $v_1<\cdots <v_n$.

Let $\mathcal{F}$ denote the floor graph associated to $C$, see Definition \ref{def:floor_graph}.
To make sure that $\mathcal{F}$ is indeed a cross-ratio floor diagram, properties \ref{item_1:def:cr_floor_diagram}, \ref{item_2:def:cr_floor_diagram}, \ref{item_3:def:cr_floor_diagram} and \ref{item_4:def:cr_floor_diagram} of Definition \ref{def:CR_floor_diagram} must be satisfied. For \ref{item_1:def:cr_floor_diagram}, define $\delta_{v_i}$ as the multiset of ends adjacent to the floor $C_i$ of $C$ which satisfies the point condition $p_i$. Property \ref{item_2:def:cr_floor_diagram} follows from balancing of $C$. For \ref{item_3:def:cr_floor_diagram}, define $\lambda_{v_i}$ as the union over all $\lambda_{u_j}$ (the set of cross-ratios satisfied at $u_j$), where $u_j$ is a vertex of the floor $C_i$, and use the path criterion (Remark \ref{remark:path_criterion}) to verify that $\mathcal{F}$ satisfies the cross-ratios $\lambda_{[l]}$ if $C$ does. Property \ref{item_4:def:cr_floor_diagram} is more technical:
If the floor $C_i$ does not contain an end of direction $(1,1,1)$ (i.e. if $\delta_{v_i}^{(1,1,1)}=\emptyset$ with the notation from Definition \ref{def:CR_floor_diagram}), then $\flow(v_i)=0$ since $C_i$ consists of a single vertex satisfying the point condition $p_i$ that gains all its flow in $C_i$ via a contracted end which is not contained in $\mathcal{F}$. Let now $\delta_{v_i}^{(1,1,1)}\neq\emptyset$ and let the \textit{elevator flow} $\flow_{\operatorname{elevator}}(C_i)$ of $C_i$ be the total flow incoming to vertices of $C_i$ via elevators. Let the \textit{end flow} $\flow_{\operatorname{end}}(C_i)$ of $C_i$ be the total flow incoming to vertices of $C_i$ via non-contracted ends (notice that notation is abused here as indicated in Definition \ref{def:floor_decomposed}). Since $C_i$, that is of degree $\Delta^3_{s_i}\left( \alpha^{i},\beta^{i} \right)$ (notation of Construction \ref{constr:cutting_along_elevators}), is fixed by all restrictions imposed to it via its ends and edges, we can use Equation \eqref{eq:general_dimension_count} and the notation of Definition \ref{def:CR_floor_diagram} to obtain
\begin{align*}
\#\Delta^3_{s_i}\left( \alpha^{i},\beta^{i} \right)-2-\#\lambda_{v_i}-\flow_{\operatorname{end}}(C_i)=\flow_{\operatorname{elevator}}(C_i),
\end{align*}
where $-2$ comes from the point condition $p_i$ satisfied by $C_i$. Using
\begin{align*}
\#\Delta^3_{s_i}\left( \alpha^{i},\beta^{i} \right)=3\cdot\#\delta_{v_i}^{(1,1,1)}+ \#\delta_{v_i}^{\alpha}+ \#\delta_{v_i}^{\beta}+\val(v_i)
\end{align*}
and
\begin{align*}
\flow_{\operatorname{end}}(C_i)=2\cdot \#\delta_{v_i}^{\alpha,P} + 2\cdot\#\delta_{v_i}^{\beta, P}+ \#\delta_{v_i}^{\alpha,L}+ \#\delta_{v_i}^{\beta,L}
\end{align*}
turns the induced flow on $\mathcal{F}$ into a condition flow of type $3$ if the leak function is defined as $\leak(v_i):=\flow_{\operatorname{elevator}}(C_i)$ for all $i=1,\dots,n$. Notice that this leak function coincides with the one of Definition \ref{def:CR_floor_diagram}. In this case, the condition flow is uniquely determined by its leak function (see Lemma \ref{lemma:condition_flow_unique}). Hence $\mathcal{F}$ is a cross-ratio floor diagram satisfying the given conditions.
We say that $C$ \textit{degenerates} to $\mathcal{F}$ and denote it by $C\to \mathcal{F}$.
\end{construction}

\begin{example}\label{ex:two_floor_decomposed_stable_maps_degenerating_to_one_CR_floor_diagram}
Let $\mathcal{F}$ be the cross-ratio floor diagram of Example \ref{ex:cross-ratio_floor_diagram_from_floor_graph}. Observe that the floor decomposed tropical stable map $C$ of Example \ref{ex:floor_decomposed_tropical_stable_map} degenerates to $\mathcal{F}$ if the labels of ends that are not shown in Figure \ref{Figure32} are chosen appropriately, i.e. to fit Example \ref{ex:cross-ratio_floor_diagram_from_floor_graph}.

Figure \ref{Figure35} shows another floor decomposed tropical stable map $D$. It satisfies the degenerated tropical cross-ratio $\lambda_1=\lbrace 1,2,3,7\rbrace$. Moreover, we claim that it is possible to assign lengths to the bounded edges of $D$ in its schematic representation in Figure \ref{Figure35} in such a way that $D$ satisfies the same conditions as $C$. The conditions in question are point conditions $p_1,p_2$ and the codimension one tangency conditions $P_{[9]\backslash [2]}$. Cut the elevators of $C$ and $D$, project the floors to $\mathbb{R}^2$ using $\pi$ as in the proof of Proposition \ref{prop:mult_CR_floor_diagram_2_dim}. Notice that it is sufficient to check whether the projections of the floors $C_1$ and $D_1$ satisfy the same conditions. For that, use the cross-ratio lattice path algorithm from \cite{CR1} with the degenerated cross-ratio $\lambda_1$ to obtain the projections $\pi(C_1)$ and $\pi(D_1)$ that then satisfy $\pi\left(p_{[2]}\right)$ and $\pi\left(P_{[9]\backslash [2]}\right)$. Lifting $\pi(C_1)$ and $\pi(D_1)$ yields the desired lengths. The lattice path calculation can be found in Example 3.15, Figure 9 of \cite{CR1}. More precisely, $\pi(C_1)$ corresponds to the entry (read as a matrix) $(6,2)$, and $\pi(D_1)$ corresponds to the entry $(4,2)$ of Figure 9 there.

Thus $D$ degenerates to $\mathcal{F}$ as well if the missing labels in Figure \ref{Figure35} are chosen appropriately.

\begin{figure}[H]
\centering
\def\svgwidth{450pt}
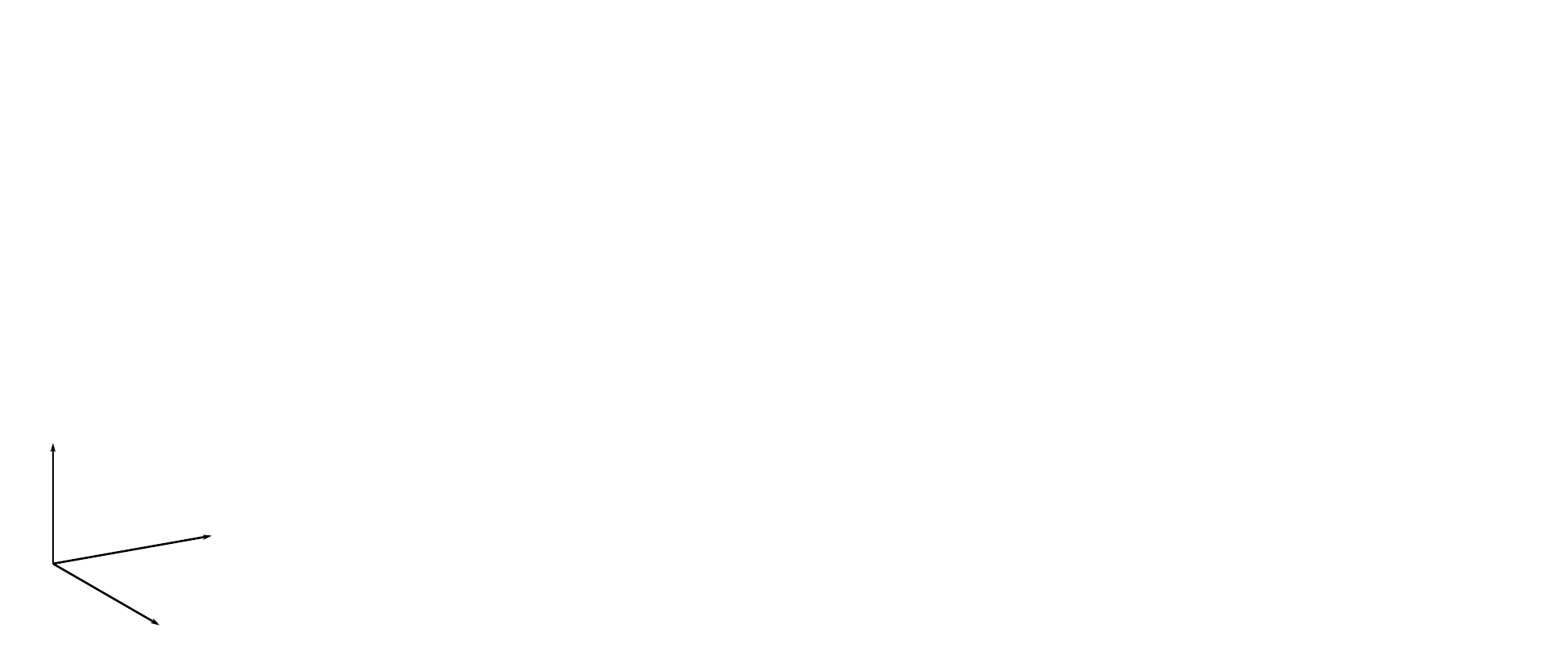
\caption{The tropical stable map $D$ from Example \ref{ex:two_floor_decomposed_stable_maps_degenerating_to_one_CR_floor_diagram} which is floor decomposed. It has two floors $D_i$ for $i=1,2$. The dashed edge is the elevator of weight two of $D$.}
\label{Figure35}
\end{figure}
\end{example}

\begin{proof}[Proof of Theorem \ref{thm:counting_CR_floor_diagrams=counting_curves}]
Since the numbers in question are independent of the exact positions of the given conditions, we can assume that all given conditions are in stretched configuration (see Definition \ref{def:stretched_config}). Thus (by Proposition \ref{prop:floor_decomposed}) every tropical stable map contributing to the right-hand side of \eqref{eq:thm:counting_CR_floor_diagrams=counting_curves} is floor decomposed. Hence Construction \ref{constr.:associating_CR_floor_diagram_to_curve} associates a cross-ratio floor diagram to every tropical stable map contributing to the right-hand side of \eqref{eq:thm:counting_CR_floor_diagrams=counting_curves}. Therefore it is sufficient to show for a fixed cross-ratio floor diagram $\mathcal{F}$ (that satisfies the given conditions) that
\begin{align}\label{eq:claim2:thm:counting_CR_floor_diagrams=counting_curves}
\mult(\mathcal{F})=\sum_{C \to\mathcal{F}} \mult(C)
\end{align}
holds (where the sum goes over all $C$ degenerating to $\mathcal{F}$), i.e. that the multiplicity with which a cross-ratio floor diagram $\mathcal{F}$ is counted equals the sum of the multiplicities of all tropical stable maps $C$ contributing to the right-hand side of \eqref{eq:thm:counting_CR_floor_diagrams=counting_curves} such that $C$ degenerates to $\mathcal{F}$. So fix a cross-ratio floor diagram $\mathcal{F}$.

To shorten notation, let $B:=\lbrace p_{[n]},L_{\underline{\kappa}^\alpha},L_{\underline{\kappa}^\beta},P_{\underline{\eta}^\alpha},P_{\underline{\eta}^\beta},\lambda_{[l]} \rbrace$ be the set of conditions that $\mathcal{F}$ satisfies, and let $N^{\floor}(B)$ (resp. $N(B)$) denote the number on the left-hand side (resp. the right-hand side) of \eqref{eq:thm:counting_CR_floor_diagrams=counting_curves}. Assume that $\mathcal{F}$ has more than $1$ vertex, because otherwise there is nothing to show. Since $\mathcal{F}$ is a tree, there is a $1$-valent vertex $v_i$ of $\mathcal{F}$ adjcacent to a vertex $v_j$ with $i<j$ via an edge $q$. There are two cases: $q$ is either a $1/1$ edge or a $2/0$ edge. First, assume that $q$ is a $1/1$ edge. Let $L^{(j)}$ (resp. $L^{(i)}$) be the codimension two condition from Corollary \ref{cor:cut_1/1_elevators_yield_standard_directions} which $v_i$ passes to $v_j$ via $q$ (resp. $v_j$ passes to $v_i$).

We follow the idea of recursively moving conditions in such a way that $\mult(C)$ can be calculated using a single graphical contribution, namely one similar to $G_1$ in Example \ref{ex:graphical_contributions}.  Cut $q$ to obtain two new cross-ratio floor diagrams $\mathcal{F}_i$ and $\mathcal{F}_j$, where $\mathcal{F}_i$ consists of a single vertex $v_i$ and $\mathcal{F}_j$ is given by $\mathcal{F}$ without $v_i$. Decomposing $\mathcal{F}$ into $\mathcal{F}_i$ and $\mathcal{F}_j$ decomposes $B$ into $B_i$ and $B_j$ as well, more precisely, let $B_i\subset B$ (resp. $B_j\subset B$) be the subset of conditions $\mathcal{F}_i\subset \mathcal{F}$ (resp. $\mathcal{F}_j$) satisfies. Notice that the set of all conditions $\mathcal{F}_i$ (resp. $\mathcal{F}_j$) satisfies is $B_i\cup L^{(i)}$ (resp. $B_j\cup L^{(j)}$).

If we change the $x$- and $y$-coordinates of the conditions in $B_i$ and $B_j$ in such a way that all conditions are still in a stretched configuration, then any tropical stable map $C$ with $C\to\mathcal{F}$ is still floor decomposed and still degenerates to $\mathcal{F}$. Notice that moving conditions as above moves $L^{(i)}$ and $L^{(j)}$ accordingly.
Hence we can achieve that $L^{(i)}$ and $L^{(j)}$ intersect in the following way: all points of the intersection of $L^{(i)}$ and $L^{(j)}$ are on the ends of $L^{(i)}$ that are of primitive direction $(0,-1)\in\mathbb{R}^2$, and on the ends of $L^{(j)}$ that are of primitive direction $(-1,0)\in\mathbb{R}^2$, see Figure \ref{Figure_17}.
Thus, using part (b) of Lemma \ref{lemma:weights_cutting_elevators}, Notation \ref{notation:replacing_tangency_conditions_1/1-edge} and notation from Definition \ref{def:graphical_contribution}, we have
\begin{align*}
\sum_{C \to\mathcal{F}} \mult(C)=\omega(q) \sum_{C \to\mathcal{F}}\mult(C_{i,\emptyset;10})\mult(C_{j,01;\emptyset}),
\end{align*}
where $\omega(q)$ is the weight of our cut edge $q$ and $C_i,C_j$ are the pieces obtained from $C$ by cutting $e$.

\begin{figure}[]
\centering
\def\svgwidth{270pt}
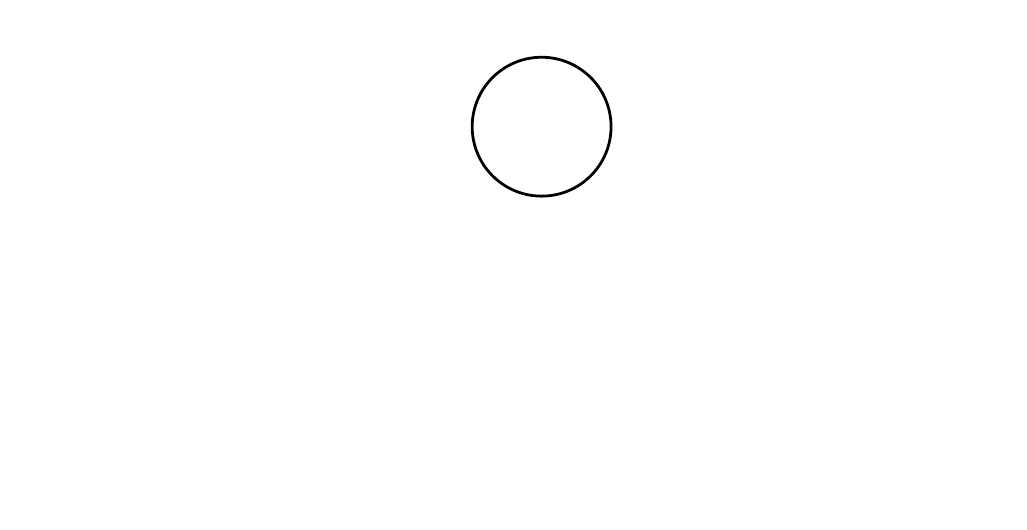
\caption{Codimension two tangency conditions $L^{(i)}$ and $L^{(j)}$ after movement, together with the codimension two tangency conditions $E^{(i)}$ and $E^{(j)}$, where $p\in L^{(i)}\cap L^{(j)}$ is the point associated to $\pi(V_i)$ and $\pi(V_j)$.}
\label{Figure_17}
\end{figure}

We claim that
\begin{align}\label{eq:claim1:thm:counting_CR_floor_diagrams=counting_curves}
\sum_{C \to\mathcal{F}}\mult(C_{i,\emptyset;10})\mult(C_{j,01;\emptyset})=N(B_i\cup \lbrace E^{(i)} \rbrace )\sum_{C_j \to\mathcal{F}_j} \mult(C_j),
\end{align}
where $\mathcal{F}_j$ is understood as a cross-ratio floor diagram that satisfies the conditions $B_j\cup \lbrace E^{(j)} \rbrace
$, and where $E^{(i)}$ and $E^{(j)}$ are codimension two tangency conditions that are tropical multi lines in $\mathbb{R}^2$ with ends of weight $1$.
To see this, let $E^{(i)}$ and $E^{(j)}$ be two tropical lines with ends of weight $1$ whose positions are chosen according to Figure \ref{Figure_17}, i.e. choose $E^{(i)}$ (resp. $E^{(j)}$) in such a way that it intersects $L^{(j)}$ (resp. $L^{(i)}$) only in its rays of primitive direction $(-1,0)$ (resp. $(0,-1)$) and such that each point of intersection lokally looks like the $x$ and $y$ axes' intersection.

Each tropical stable map $C_i$ contributing to $N(B_i\cup \lbrace E^{(i)} \rbrace )$ has an end $q$ parallel to the $z$-axis whose adjacent vertex $V_i$ is $3$-valent and satisfies $\pi(V_i)\in L^{(j)}\cap E^{(i)}$, where $\pi$ is the projection that forgets the $z$-coordinate. This is true due to Proposition \ref{prop:push-forward} and since $C_i$ satisfies $L^{(j)}$ by definition of $L^{(j)}$. Analogously, by definition of $L^{(i)}$ and Proposition \ref{prop:push-forward}, each tropical stable map $C_j$ from the right-hand side of \eqref{eq:claim1:thm:counting_CR_floor_diagrams=counting_curves} has an end $q$ parallel to the $z$-axis whose adjacent vertex $V_j$ is $3$-valent and satisfies $\pi(V_j)\in L^{(i)}\cap E^{(j)}$.
Since $V_i$ (resp. $V_j$) is by Proposition \ref{prop:push-forward} adjacent to an end of $C_i$ (resp. $C_j$), we can move $V_i$ and $V_j$ as in Figure \ref{Figure_17} to the corresponding point of intersection of $L^{(j)}\cap L^{(i)}$ such that the combinatorials types of $C_i$ and $C_j$ do not change and such that the multiplicities of $C_i$ and $C_j$ understood as tropical stable maps contributing to the right-hand side of \eqref{eq:claim1:thm:counting_CR_floor_diagrams=counting_curves} do not change. Since we moved $V_i$ and $V_j$ to one point, we can glue $C_i$ and $C_j$ to obtain a tropical stable map $C$ such that $C\to\mathcal{F}$ and the multiplicities of $C_i$ and $C_j$ (understood as tropical stable maps contributing to the right-hand side of \eqref{eq:claim1:thm:counting_CR_floor_diagrams=counting_curves}) are equal to $\mult(C_{i,\emptyset;10})$ and $\mult(C_{j,01;\emptyset})$ by our special choice of the positions of $L^{(i)}$ and $L^{(j)}$. Reversing the process of glueing yields a bijection between factors of the left and factors of the right-hand side of \eqref{eq:claim1:thm:counting_CR_floor_diagrams=counting_curves}.

The multiplicity of the vertex $v_i$ of $\mathcal{F}$ equals $N(B_i\cup \lbrace E^{(i)} \rbrace )$ by Definition \ref{def:mult_CR_floor_diagram}. Moreover, if $q$ is a $2/0$ edge instead, then part (a) of Lemma \ref{lemma:weights_cutting_elevators} guarantees that multiplicities splite nicely if edges are cut, so in total \eqref{eq:claim1:thm:counting_CR_floor_diagrams=counting_curves} gives rise to a recursion that eventually yields
\begin{align*}
\sum_{C \to\mathcal{F}}\mult(C_{i,\emptyset;10})\mult(C_{j,01;\emptyset})=\prod_{i=1}^n \mult(v_i)
\end{align*}
since $\mathcal{F}$ is a tree. Hence \eqref{eq:claim2:thm:counting_CR_floor_diagrams=counting_curves} holds.

Notice that $L^{(i)}$ and $L^{(j)}$ depend on the choice of the floor diagram $\mathcal{F}$. So we should use the notation $L_\mathcal{F}^{(i)}$ and $L_\mathcal{F}^{(j)}$ instead.
It remains to show that we can bring $L_\mathcal{F}^{(i)}$ and $L_\mathcal{F}^{(j)}$ in a position as above for each choice of floor diagram $\mathcal{F}$ without effecting the overall weighted count of cross-ratio floor diagrams $N^{\floor}_{\Delta^3_d(\alpha,\beta)}\left( p_{[n]},L_{\underline{\kappa}^\alpha},L_{\underline{\kappa}^\beta},P_{\underline{\eta}^\alpha},P_{\underline{\eta}^\beta},\lambda_{[l]}\right)$. Moving conditions as above does not lead to a tropical stable map degenerating to another cross-ratio floor diagram then it initially did, and the cycle obtained by moving conditions (i.e. by relaxing some of the initially given conditions) as above is balanced. Therefore we can assume that $L_\mathcal{F}^{(i)}$ and $L_\mathcal{F}^{(j)}$ are always in a position as shown in Figure \ref{Figure_17}.
\end{proof}

\begin{corollary}\label{cor:count_CR_floor_diag=classical_count}
Notation of Theorem \ref{thm:correspondence_thm} is used. Let $\mu_{[l]}$ be non-tropical cross-ratios tropicalizing to $\lambda'_{[l]}$ and let $p_{[n]}$ be point conditions such that $p_{[n]},\lambda_{[l]}$, where $\lambda'_j$ degenerates to $\lambda_j$ for $j\in [l]$, are in general position with respect to the degree $\Delta_d^3(\alpha,\beta)$ and such that each entry of a degenerated tropical cross-ratio is a label of a contracted end or a label of an end of primitive direction $(0,0,\pm 1)\in\mathbb{R}^3$. Then
\begin{align*}
N^{\operatorname{alg}}_{\Delta_d^3(\alpha,\beta)}\left(p_{[n]},\mu_{[l]}\right)
=
N^{\floor}_{\Delta^3_d(\alpha,\beta)}\left( p_{[n]},\lambda_{[l]}\right)
\end{align*}
holds. Thus rational algebraic space curves that satisfy non-tropical cross-ratio conditions and point conditions can be enumerated using cross-ratio floor diagrams.
\end{corollary}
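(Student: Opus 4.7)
The plan is to chain together three results established earlier in the paper, since the corollary should follow essentially formally once the main theorem is in hand. I first observe that the hypotheses on the cross-ratios $\lambda_{[l]}$ (each entry being a label of a contracted end or an end of primitive direction $(0,0,\pm 1)$) are imposed precisely to match the hypotheses of both Theorem \ref{thm:correspondence_thm} (as used for $m=3$) and Theorem \ref{thm:counting_CR_floor_diagrams=counting_curves}. In particular, the setting of the corollary is a special case of both theorems with the tangency-condition sets $L_{\underline{\kappa}^\alpha},L_{\underline{\kappa}^\beta},P_{\underline{\eta}^\alpha},P_{\underline{\eta}^\beta}$ all empty.

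Starting from the algebraic side, I would apply Tyomkin's correspondence theorem (Theorem \ref{thm:correspondence_thm}) in the form
\begin{align*}
N^{\operatorname{alg}}_{\Delta_d^3(\alpha,\beta)}\left(p_{[n]},\mu_{[l]}\right)
=
N_{\Delta_d^3(\alpha,\beta)}\left(p_{[n]},\lambda'_{[l]}\right),
\end{align*}
which translates the algebraic count into a count of rational tropical stable maps satisfying the non-degenerated tropical cross-ratios $\lambda'_{[l]}$ associated to $\mu_{[l]}$. Next, Proposition \ref{prop:zsfssg_int_theory} allows me to pass from non-degenerated to degenerated tropical cross-ratios without changing the count, giving
\begin{align*}
N_{\Delta_d^3(\alpha,\beta)}\left(p_{[n]},\lambda'_{[l]}\right)
=
N_{\Delta_d^3(\alpha,\beta)}\left(p_{[n]},\lambda_{[l]}\right).
\end{align*}
Note that this step is valid in the absence of tangency conditions, which is exactly our situation.

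Finally, the main Theorem \ref{thm:counting_CR_floor_diagrams=counting_curves}, specialized to empty sets $\underline{\kappa}^\alpha,\underline{\kappa}^\beta,\underline{\eta}^\alpha,\underline{\eta}^\beta$, yields
\begin{align*}
N_{\Delta_d^3(\alpha,\beta)}\left(p_{[n]},\lambda_{[l]}\right)
=
N^{\floor}_{\Delta_d^3(\alpha,\beta)}\left(p_{[n]},\lambda_{[l]}\right).
\end{align*}
Composing the three equalities above gives the desired identity. There is no real obstacle here beyond verifying that the hypothesis on the cross-ratio entries is consistent across all three invoked results; once this compatibility is recorded, the proof is a one-line concatenation of the three cited equalities.
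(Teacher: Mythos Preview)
Your proof is correct and follows exactly the same approach as the paper: the paper's proof is the single sentence ``Combine correspondence theorem \ref{thm:correspondence_thm}, Proposition \ref{prop:zsfssg_int_theory} and Theorem \ref{thm:counting_CR_floor_diagrams=counting_curves},'' and you have simply written out that concatenation of equalities in full.
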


\begin{proof}
Combine correspondence theorem \ref{thm:correspondence_thm}, Proposition \ref{prop:zsfssg_int_theory} and Theorem \ref{thm:counting_CR_floor_diagrams=counting_curves}.
\end{proof}

\hyphenation{Kaisers-lautern} 
\bibliographystyle{alpha}
\bibliography{literatur}

\begin{thebibliography}{CJMR17}

\bibitem[AHR16]{AllermannHampeRau}
Lars Allermann, Simon Hampe, and Johannes Rau.
\newblock On rational equivalence in tropical geometry.
\newblock {\em Canad. J. Math.}, 68(2):241--257, 2016.

\bibitem[All10]{Allermann}
Lars Allermann.
\newblock {\em Tropical intersection theory}.
\newblock PhD thesis, TU Kaiserslautern, 2010.
\newblock https://kluedo.ub.uni-kl.de/files/2171/main.pdf.

\bibitem[AR10]{FirstStepsIntersectionTheory}
Lars Allermann and Johannes Rau.
\newblock First steps in tropical intersection theory.
\newblock {\em Math. Z.}, 264(3):633--670, 2010.

\bibitem[BGM12]{PsiFloorDiagrams}
Florian Block, Andreas Gathmann, and Hannah Markwig.
\newblock Psi-floor diagrams and a caporaso-harris type recursion.
\newblock {\em Israel J. Math.}, 191(1):405--449, 2012.

\bibitem[Blo11]{BlockNodePoly}
Florian Block.
\newblock Computing node polynomials for plane curves.
\newblock {\em Math. Res. Lett.}, 18(4):621–643, 2011.

\bibitem[BM]{unpublishedFloorDiagrams}
Erwan Brugall\'{e} and Grigory Mikhalkin.
\newblock Floor decompositions of tropical curves: the planar case.
\newblock Preprint available at
  http://erwan.brugalle.perso.math.cnrs.fr/articles/FDn/FDGeneral.pdf.

\bibitem[BM07]{MikhalkinBrugalleFloorDiagIntroduction}
Erwan Brugall\'{e} and Grigory Mikhalkin.
\newblock Enumeration of curves via floor diagrams.
\newblock {\em C. R. Math. Acad. Sci. Paris}, 345(6):329 -- 334, 2007.

\bibitem[BM09]{MikhalkinBrugalle}
Erwan Brugall\'{e} and Grigory Mikhalkin.
\newblock Floor decompositions of tropical curves: the planar case.
\newblock {\em Proceedings of 15\textsuperscript{th} G{\"o}kova
  Geometry-Topology Conference}, pages 64--90, 2009.

\bibitem[BM16]{BrugalleMarkwig2016}
Erwan Brugall\'{e} and Hannah Markwig.
\newblock Deformation and tropical {H}irzebruch surfaces and enumerative
  geometry.
\newblock {\em J. Algebraic Geom.}, 25(4):633--702, 2016.

\bibitem[Bru15]{Bru}
Erwan Brugall\'e.
\newblock Floor diagrams relative to a conic, and {GW}-{W} invariants of del
  {P}ezzo surfaces.
\newblock {\em Adv. Math.}, 279:438--500, 2015.

\bibitem[CJMR17]{RanganathanCavalieriMarkwigJohnson}
Renzo Cavalieri, Paul Johnson, Hannah Markwig, and Dhruv Ranganathan.
\newblock {Counting curves on toric surfaces: tropical geometry and the Fock
  space}.
\newblock {\em ArXiv e-prints: 1706.05401v1}, 2017.

\bibitem[FM10]{MikhalkinFomin}
Sergey Fomin and Grigory Mikhalkin.
\newblock {Labeled floor diagrams for plane curves}.
\newblock {\em J. Eur. Math. Soc.}, 012(6):1453--1496, 2010.

\bibitem[FS97]{FultonSturmfels}
William Fulton and Bernd Sturmfels.
\newblock Intersection theory on toric varieties.
\newblock {\em Topology}, 36(2):335 -- 353, 1997.

\bibitem[GKM09]{GathmannKerberMarkwig}
Andreas Gathmann, Michael Kerber, and Hannah Markwig.
\newblock Tropical fans and the moduli spaces of tropical curves.
\newblock {\em Compos. Math.}, 145:173--195, 2009.

\bibitem[GM08]{KontsevichPaper}
Andreas {Gathmann} and Hannah {Markwig}.
\newblock {Kontsevich's formula and the WDVV equations in tropical geometry}.
\newblock {\em Adv Math}, 217(2):537--560, 2008.

\bibitem[GMS13]{BroccoliCurves}
Andreas Gathmann, Hannah Markwig, and Franziska Schroeter.
\newblock Broccoli curves and the tropical invariance of {W}elschinger numbers.
\newblock {\em Adv. Math.}, 240:520--574, 2013.

\bibitem[{Gol}18]{CR1}
Christoph {Goldner}.
\newblock {Counting tropical rational curves with cross-ratio constraints}.
\newblock {\em ArXiv e-prints: 1805.00115}, 2018.

\bibitem[{Gol}20]{GeneralKontsevich}
Christoph {Goldner}.
\newblock Generalizing tropical {K}ontsevich's formula to multiple
  cross-ratios.
\newblock {\em ArXiv e-prints: 2002.10808}, 2020.

\bibitem[Kat12]{Katz2012}
Eric Katz.
\newblock Tropical intersection theory from toric varieties.
\newblock {\em Collect. Math.}, 63(1):29--44, 2012.

\bibitem[Mik05]{MikhalkinFundamental}
Grigory Mikhalkin.
\newblock {Enumerative tropical algebraic geometry in $\Bbb{R}^2$.}
\newblock {\em J. Am. Math. Soc.}, 18(2):313--377, 2005.

\bibitem[Mik07]{MikhalkinCRC}
Grigory Mikhalkin.
\newblock Moduli spaces of rational tropical curves.
\newblock {\em Proceedings of 13\textsuperscript{th} G{\"o}kova
  Geometry-Topology Conference}, pages 39--51, 2007.

\bibitem[MR19]{MandelRuddat2019}
Travis Mandel and Helge Ruddat.
\newblock Tropical quantum field theory, mirror polyvector fields, and
  multiplicities of tropical curves.
\newblock {\em ArXiv e-prints: 1902.07183}, 2019.

\bibitem[Rau09]{Rau}
Johannes Rau.
\newblock {\em Tropical intersection theory and gravitational descendants}.
\newblock PhD thesis, TU Kaiserslautern, 2009.
\newblock https://kluedo.ub.uni-kl.de/files/2122/Published.pdf.

\bibitem[Rau16]{JohannesIntersectionsonTropModuliSpaces}
Johannes Rau.
\newblock Intersections on tropical moduli spaces.
\newblock {\em Rocky Mountain J. Math.}, 46(2):581--662, 2016.

\bibitem[Sha13]{IntersectionMatroidalFans}
Kristin~M. Shaw.
\newblock A tropical intersection product in matroidal fans.
\newblock {\em SIAM J. Discrete Math.}, 27(1):459--491, 2013.

\bibitem[Tor14]{Torchiani}
Carolin Torchiani.
\newblock {\em Enumerative geometry of rational and elliptic tropical curves in
  $\mathbb{R}^m$}.
\newblock PhD thesis, TU Kaiserslautern, 2014.
\newblock https://www.mathematik.uni-kl.de/$\sim$gathmann/pub/carolin.pdf.

\bibitem[Tyo17]{IlyaCRC}
Ilya Tyomkin.
\newblock Enumeration of rational curves with cross-ratio constraints.
\newblock {\em Adv Math}, 305:1356 -- 1383, 2017.

\end{thebibliography}

\end{document}